\renewcommand{\mathcal}{\mathscr}
\renewcommand{\le}{\leqslant}
\renewcommand{\ge}{\geqslant}
\def\R {\mathbb{R}}
\def\C {\mathcal{C}}
\def\N {\mathbb{N}}
\renewcommand{\div}{\mathrm{div}}
\newcommand{\pa}{\partial}
\newcommand{\aaa}{\vartheta}
\DeclareMathOperator*{\pv}{pv}
\newtheorem{proposition}{Proposition}[section]
\newtheorem{theorem}[proposition]{Theorem}
\newtheorem*{theorem*}{Theorem}
\newtheorem{corollary}[proposition]{Corollary}
\newtheorem{lemma}[proposition]{Lemma}
\theoremstyle{definition}
\newtheorem{example}[proposition]{Example}
\numberwithin{equation}{section}
\begin{document}

\title[Stable solutions]{On stable solutions \\
of boundary reaction-diffusion equations \\
and applications to nonlocal problems \\
with Neumann data}

\author[S. Dipierro]{Serena Dipierro}
\address[Serena Dipierro]{School of Mathematics and Statistics,
University of Melbourne,
 813 Swanston St,
Parkville, VIC-3010
Melbourne, Australia,
School of Mathematics and Statistics,
University of Western Australia,
35 Stirling Highway,
Crawley, Perth,
WA-6009, Australia, 
and
Weierstra{\ss}-Institut f\"ur Angewandte
Analysis und Stochastik, Hausvogteiplatz 5/7, 10117 Berlin, Germany}
\email{serena.dipierro@ed.ac.uk}

\author[N. Soave]{Nicola Soave}
\address[Nicola Soave]{Mathematisches Institut,
Justus-Liebig-Universit\"at Giessen, 
Arndtstrasse 2, 35392 Giessen, Germany}
\email{nicola.soave@gmail.com, nicola.soave@math.uni-giessen.de}

\author[E. Valdinoci]{Enrico Valdinoci}
\address[Enrico Valdinoci]{School of Mathematics and Statistics,
University of Melbourne,
 813 Swanston St,
Parkville, VIC-3010
Melbourne, Australia,
School of Mathematics and Statistics,
University of Western Australia,
35 Stirling Highway,
Crawley, Perth,
WA-6009, Australia, 
Weierstra{\ss}-Institut f\"ur Angewandte
Analysis und Stochastik, Hausvogteiplatz 5/7, 10117 Berlin, Germany,
Dipartimento di Matematica, Universit\`a degli studi di Milano,
Via Saldini 50, 20133 Milan, Italy, and
Istituto di Matematica Applicata e Tecnologie Informatiche,
Consiglio Nazionale delle Ricerche,
Via Ferrata 1, 27100 Pavia, Italy.}
\email{enrico@math.utexas.edu}

\begin{abstract}
We study reaction-diffusion equations
in cylinders with possibly nonlinear diffusion
and possibly nonlinear Neumann boundary conditions.
We provide a geometric
Poincar\'e-type inequality and classification results for stable
solutions, and we apply them to the study of an associated nonlocal problem. We also establish a counterexample
in the corresponding framework for the fractional Laplacian.
\end{abstract}

\subjclass[2010]{35J62, 35J92, 35J93, 35B53}

\keywords{Stability, symmetry results, classification of solution, reaction-diffusion equations, nonlocal equations}

\maketitle

\tableofcontents

\section{Introduction}

\subsection{Boundary reactions and stable solutions}

In this paper we study
reaction-diffusion equations,
i.e. mathematical models 
in which the diffusion process is in balance
with a nonlinear reaction. The diffusion
is modeled by a (possibly nonlinear) operator of elliptic type,
and the reaction may occur on the domain as well as on the
boundary, via a Neumann condition.
A typical example of reaction-diffusion model is given by the 
Peierls-Nabarro model for atom dislocations in crystals,
in which the elastic force acting on the dislocation
function is balanced through a potential acting
on the slip plane and thus producing a boundary
reaction (see e.g.~\cite{Hirth-Lothe}
or Section~2 of~\cite{DPV-CMP}
for a physical derivation of such model).

Other models which naturally produce reaction-diffusion
equations concern 
the distribution
of chemical substances, such as in the case of
the so called Fisher-KPP equation
(see~\cite{Fisher} and~\cite{Kolmogorov-Moscow-1937}).

The domain that we will consider in this paper
is a cylinder that is infinite
in one direction,
namely the Cartesian product of a smooth domain $\Omega$
and~$(0,+\infty)$. Homogeneous Neumann conditions
are prescribed along the lateral boundary~$\partial\Omega\times\{y\}$,
for any~$y>0$, and possibly nonhomogeneous and weighted Neumann data
are given on the bottom of the domain~$\Omega\times\{0\}$.
The interest for this Neumann type conditions
in cylinder is also related to the representation
of the powers of the Laplacian in the spectral sense,
see \cite{MonPelVer, Stinga, StinVolz}.

The main problem we address here is the classification
of stable solutions, i.e. solutions of the equation
which correspond to a nonnegative second
variation of the associated energy functional
(notice that, in particular, minimal solutions
fall into this category). The classification
of stable solutions of elliptic
equations with homogeneous Neumann data goes back at
least to the celebrated results in~\cite{CH},
which show that the only stable solutions
of semilinear equations in a domain with homogeneous Neumann
conditions are the constants, under suitable convexity
assumptions either on the domain or on the nonlinearity.

Our main results concern the extension
of these type of classifications for reaction-diffusion
equations on
cylindrical domains with reactive boundary conditions
(in this circumstances, as we will see,
the stable solutions are not necessarily constant,
but will depend only on the ``vertical'' variable).

Related, but rather different in spirit,
classification results for reaction-diffusions
in low-dimensional halfspaces have
been obtained in~\cite{Sola, SirVal1, SirVal2,
Cabre-Sire, Cabre-Cinti-1, savin, Cabre-Cinti-2}
(in this case, the stable solutions only depend
on one ``horizontal'' variable).

Also, we will provide a geometric 
Poincar\'e-type formula, which can be
seen as the counterpart of an inequality
obtained in~\cite{Stern}
for elliptic equations.

Since the results obtained are related
to fractional equations, we will firstly apply our main results to a Neumann boundary value problem for the spectral Neumann Laplacian. Afterwards, we provide
a counterexample that prevents classification
in a related, but different, nonlocal setting.

Now we introduce the model under consideration
in further details and give precise statements
of the results obtained.

\subsection{The mathematical setting}

The problem under investigation in this paper is the following: 
\begin{equation}\label{general problem} 
\begin{cases} \div(a(y,|\nabla 
u|) \nabla u) =g(y,u) & \text{in $\Omega \times (0,+\infty)=: \C$,} \\ 
\partial_\nu u = 0 & \text{on $\pa \Omega \times (0,+\infty)=: \pa_L 
\C$,} \\ -a(y,|\nabla u|) \pa_y u= f(u) & \text{on $\Omega \times 
\{0\}=: \pa_B \C$}. \end{cases} \end{equation} 
Here and in the rest of 
the paper, the set~$\Omega \subset \R^n$ is a bounded and sufficiently 
regular (say of class $C^{4,\alpha}$) domain.
As for the forcing terms $g$ and 
$f$, we suppose that~$g$ is continuous with respect to
the first variable and locally Lipschitz with respect to
the second variable, and that~$f\in C^{2,\alpha}(\R)$, for some~$\alpha\in(0,1)$ (we remark that $f$ locally Lipschitz would be sufficient for most of the result in the paper, with the exception of Theorems \ref{thm: s-Neumann 1} and \ref{thm: s-Neumann 2}).

The variables in the cylinder $\C$ are denoted by $x \in \Omega \subset \R^n$, and $y \in (0,+\infty)$. 
In some cases, we will use the notation $X:=(x,y) \in \mathcal{C}$.

Moreover, in the whole of the paper we will assume the following structural
conditions on the function~$a$: 
we assume that
\[
a \in C((0,+\infty) \times [0,+\infty)) \cap C^1((0,+\infty) \times (0,+\infty)),
\]
that
\begin{equation}
a(y,t)>0 \quad
{\mbox{and}}\quad
a(y,t)+ t a_t(y,t) >0 \label{ELLIPTICITY} 
\end{equation}
for any~$y>0$ and~$t\ge0$,
that
there exists $C>0$ such that
\begin{equation}
t \,|a_t(y,t)|\le C\,a(y,t) \label{ELLIPTICITY:2}
\end{equation}
for any~$y>0$ and~$t\ge0$, and that
\begin{equation}\label{ELLIPTICITY:3}
\lim_{t\to0} t\,a_t(y,t)=0
\end{equation}
for any~$y>0$. 

Here and in what follows the subscript $t$ stays for
the derivative of $a$ with respect to the second variable.
{F}rom the analytical point of view, condition~\eqref{ELLIPTICITY}
may be seen as a rather general form of ellipticity
(this will be detailed in Lemma~\ref{B-POS}).
Some examples of $a(y,t)$ that we take into account are
\begin{align*}
{\mbox{$a(y,t)=y^\aaa$, with $\aaa\in(-1,1)$,}} \\
{\mbox{$a(y,t) = y^\aaa (1+t^2)^{p/2}$,
with $\aaa\in(-1,1)$ and~$p>1$,}} \\
{\mbox{$a(y,t) = \displaystyle\frac{y^\aaa}{\sqrt{1+t^2}}$,
with $\aaa\in(-1,1)$ and
$|\nabla u| \in L^\infty(\C)$.}}
\end{align*}    
In particular, our assumptions
comprise the 
quasilinear equations of $p$-Laplace type
and mean curvature type, possibly weighted
by Muckenhoupt weights. The case $a(y,t)=y^\aaa$ naturally arises
in some extension problems for the spectral fractional Laplacian with Neumann boundary condition, see \cite{MonPelVer, Stinga}.

We now clarify the type of solutions that
we are going to consider. We always suppose that
\begin{equation}\label{hp su u-2}
\begin{split}
& u \in C(\overline{\mathcal{C}}) \cap C^2(\overline{\Omega} \times (0,+\infty) ),\\
&\nabla_x u,\;D^2_x u \in L^2(\Omega \times \{0\}), \\
&\partial_\nu u(x,y) = 0 \quad {\mbox{for all }} (x,y) \in \partial \Omega \times [0,+\infty), 
\,{\mbox{ and for all $R>0$}}\\
& a(y,|\nabla u|)\,\Big( |\nabla u|^2 + |D^2_x u|^2+|\nabla_x u_y|^2+
|D^3_x u|^2 +|D^2_x u_y|^2 \Big)\in L^1(\Omega\times (0,R)),
\end{split}  
\end{equation}
where $\nu=(\tilde \nu,0) \in \R^n \times \R$ and $\tilde \nu$ denotes the outer unit vector field on $\partial \Omega$. 

Here and in the rest of the paper, the notation~$\nabla_x$ stands for
the gradient only in the~$x$ variable (in particular, $\nabla_x u$
is an $n$-dimensional vector field).
The second condition in~\eqref{hp su u-2}
is intended in the sense that $\nabla_x u$ and $D^2_x u$ have a $L^2$ trace on $\Omega \times \{0\}$.
Notice also that the second and the third conditions in~\eqref{hp su u-2}
do not require $u$ to be of class $C^1$ near $\partial \Omega \times \{0\}$, since only the derivatives of $u$ with respect to $x$ are taken into account. We shall see that the previous assumptions are naturally satisfied when \eqref{general problem} is seen as extension of a nonlocal boundary value problem. Moreover, they can be directly checked in many concrete cases using the classical regularity theory for elliptic equation (up to the boundary), for which we refer to \cite{NIR, NIR2}.

Concerning the last equation in \eqref{general problem}, under reasonable assumptions on $a$, $g$ and $f$ it can be interpreted in the classical case as
\begin{equation}\label{last equation}
 \lim_{y\to0} f(u(x,y))+a\big(y,|\nabla u(x,y)|\big)\,
\partial_y u(x,y) = 0
\quad \text{for any~$x\in\Omega$}.
\end{equation}
In general we will not need such a regularity. On the contrary, we call solution of \eqref{general problem} any function $u$ satisfying \eqref{hp su u-2} and such that \eqref{general problem} holds in the following weak sense:
\begin{equation}\label{WEAK}
\int_{\C} a(y,|\nabla u|) \nabla u \cdot \nabla \varphi + \int_{\C}
g(y,u) \varphi = \int_{\pa_B \C} f(u) \varphi
\end{equation}
for any $\varphi\in{\mathcal{A}}$, where
\begin{equation}\label{DEF:A}
{\mathcal{A}}:= \left\{ \varphi\in W^{1,1}_{\rm loc}(\C) \left| \begin{array}{l}
{\mbox{$\varphi$ has bounded support in $y$, $a(y,|\nabla u|)\,|\nabla\varphi|^2\in L^1(\C)$}}
\\ 
 {\mbox{and }} \varphi|_{\Omega \times \{0\}} \in L^2(\Omega)
\end{array}\right.\right\}.
\end{equation}
It is clear that any classical solution is also a weak one, in the sense specified above.

Let us consider now the symmetric matrix
\begin{equation}\label{DEF:B}
\mathcal{B}(y,\eta)_{ij}:= 
a(y,|\eta|) \delta_{ij} + 
\frac{a_t(y,|\eta|)}{|\eta|}\eta_i \eta_j \qquad {\mbox{for all }} i,j =1,\dots,n+1,
\end{equation}
where~$\eta=(\eta_1,\dots,\eta_{n+1})$, and we mean that the latter term is zero
if~$\eta$ is zero.

The matrix $\mathcal{B}$ plays a role in the linearized
equation (in a sense that will be clarified in Lemma \ref{CL}). 

We write that $u$ is a \emph{stable solution} of \eqref{general problem} if it is a solution (in the sense of~\eqref{WEAK})
and if 
\begin{equation}\label{stability}
I(\varphi) :=\int_{\C} \langle \mathcal{B}(y,\nabla u)  
\nabla \varphi,\nabla \varphi\rangle + \int_{\C}g_u(y,u)\varphi^2 -
\int_{\pa_B \C} f'(u) \varphi^2 \ge 0
\end{equation}
for any $\varphi\in {\mathcal{A}}$. 

Having introduced the main definitions and notation, we are in position to present our main results,
which are: a geometric Poincar\'e-type formula,
the classification of stable solutions when $\Omega$ is convex, the
classification of bounded stable solutions in case of convex/concave boundary 
reaction~$f$, and the application of these results to nonlocal problems in $\Omega$ related to the spectral Neumann Laplacian. Finally, we also present
a counterexample for the fractional Laplacian with point-wise Neumann boundary condition.

\subsection{A Poincar\'e-type formula}

The first result that we present is
a weighted Poincar\'e-type inequality.
A weighted $L^2(\C)$-norm of any test function
will be bounded by a weighted
$L^2(\C)$-norm of its gradient.
The weights are non-negative and possess
a neat geometric interpretation. This type
of Poincar\'e-type formulas are indeed an extension
of a celebrated result obtained in~\cite{Stern}
for classical elliptic equations.
The precise statement in our framework is the following:

\begin{theorem}\label{TH:POI}
Let $u$ be a stable solution of \eqref{general problem}.
Then, for any $\psi \in C^1(\overline{\mathcal{C}})$
with bounded support in~$y$ and such that $\psi_{x_j} \in \mathcal{A}$ for any $j=1,\dots,n$, we have
\begin{equation}\label{POI:FO}
\begin{split}
& \int_\C\left[ \sum_{j=1}^{n}
\langle {\mathcal{B}}(y,\nabla u)\,\nabla u_{x_j},
\nabla u_{x_j}\rangle
-\langle \mathcal{B}(y,\nabla u)
\nabla |\nabla_x u|,\, \nabla |\nabla_x u|\rangle\right] \,\psi^2
\\ &\qquad-\int_{\partial_L\C}
a(y,|\nabla u|)\,\big(\nabla u \cdot \partial_\nu( \nabla u)\big)\,\psi^2
\le\int_{\C}\langle \mathcal{B}(y,\nabla u)
\nabla \psi,\,\nabla \psi\rangle\,|\nabla_x u|^2.
\end{split}\end{equation}
\end{theorem}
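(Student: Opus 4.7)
The plan is the standard Sternberg--Zumbrun derivation, adapted to the quasilinear bulk operator and to the presence of a reactive bottom and a homogeneous Neumann lateral boundary. The backbone is to combine two ingredients: the stability inequality \eqref{stability} tested against $\varphi=\psi|\nabla_x u|$, and the equation obtained by differentiating \eqref{general problem} in each horizontal direction $x_j$, $j=1,\dots,n$, tested against $\psi^2 u_{x_j}$ and summed over $j$. The pointwise Kato-type identity $|\nabla_x u|\,\nabla|\nabla_x u|=\sum_{j=1}^n u_{x_j}\nabla u_{x_j}$ (valid wherever $|\nabla_x u|>0$) makes the cross terms in these two computations match; subtracting one from the other isolates exactly the good term $-\langle\mathcal{B}\nabla|\nabla_x u|,\nabla|\nabla_x u|\rangle$ appearing on the left-hand side of \eqref{POI:FO}.

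First I would observe that $\mathcal{B}$ in \eqref{DEF:B} is precisely the differential of $\eta\mapsto a(y,|\eta|)\eta$, so that $\partial_{x_j}[a(y,|\nabla u|)u_{x_i}]=[\mathcal{B}(y,\nabla u)\nabla u_{x_j}]_i$. Differentiating \eqref{general problem} tangentially in $x_j$ then yields the linearized system
\begin{equation*}
\div\bigl(\mathcal{B}\nabla u_{x_j}\bigr)=g_u(y,u)\,u_{x_j}\ \text{in }\C,\qquad -[\mathcal{B}\nabla u_{x_j}]_{n+1}=f'(u)\,u_{x_j}\ \text{on }\pa_B\C.
\end{equation*}
Multiplying this equation by $\psi^2 u_{x_j}$, integrating by parts in $\C$ (using the bottom condition to absorb the $\pa_B\C$ contribution and denoting by $(\tilde\nu,0)$ the outer conormal on $\pa_L\C$), and summing over $j$ gives
\begin{align*}
\sum_{j=1}^n\int_\C\psi^2\langle\mathcal{B}\nabla u_{x_j},\nabla u_{x_j}\rangle&+2\sum_{j=1}^n\int_\C\psi\,u_{x_j}\langle\mathcal{B}\nabla u_{x_j},\nabla\psi\rangle+\int_\C g_u|\nabla_x u|^2\psi^2-\int_{\pa_B\C}f'(u)|\nabla_x u|^2\psi^2\\
&=\sum_{j=1}^n\int_{\pa_L\C}\bigl[\mathcal{B}\nabla u_{x_j}\cdot(\tilde\nu,0)\bigr]u_{x_j}\psi^2.
\end{align*}

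Next I plug $\varphi=\psi|\nabla_x u|$ into \eqref{stability} and expand $\nabla(\psi|\nabla_x u|)=|\nabla_x u|\nabla\psi+\psi\nabla|\nabla_x u|$. By the Kato identity, the cross term $2\psi|\nabla_x u|\langle\mathcal{B}\nabla|\nabla_x u|,\nabla\psi\rangle$ rewrites as $2\psi\sum_j u_{x_j}\langle\mathcal{B}\nabla u_{x_j},\nabla\psi\rangle$, which coincides with the cross term in the previous display. Subtracting that display from the stability inequality, the cross terms, the $g_u$ terms, and the $f'(u)$ terms all cancel, and a rearrangement yields \eqref{POI:FO} once the lateral flux has been rewritten as $\sum_j[\mathcal{B}\nabla u_{x_j}\cdot(\tilde\nu,0)]u_{x_j}=a(y,|\nabla u|)\,\nabla u\cdot\partial_\nu\nabla u$ on $\pa_L\C$. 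This identification is where the lateral Neumann condition enters: the relation $\sum_{i=1}^n\tilde\nu_i u_{x_i}\equiv 0$ on $\pa_L\C$ (together with its $y$-derivative $\sum_i\tilde\nu_i u_{x_iy}\equiv 0$) kills the rank-one piece $\tfrac{a_t}{|\nabla u|}u_{x_i}u_{x_j}$ of $\mathcal{B}$ when contracted with $\tilde\nu_i u_{x_j}$, and makes the $(n+1)$-component $u_y\,\partial_\nu u_y$ of $\nabla u\cdot\partial_\nu\nabla u$ vanish, leaving the announced expression.

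The main obstacle is the classical one: $|\nabla_x u|$ is only Lipschitz, so $\varphi=\psi|\nabla_x u|$ is not literally admissible in \eqref{stability} and $\nabla|\nabla_x u|$ is ambiguous on $\{|\nabla_x u|=0\}$. I would resolve this by the standard regularization replacing $|\nabla_x u|$ by $\sqrt{|\nabla_x u|^2+\eps^2}$, performing the computation for $\eps>0$, and then letting $\eps\to 0$. The weighted $L^1$ bounds on $|D^2_x u|^2$ and $|\nabla_x u_y|^2$ with weight $a(y,|\nabla u|)$, and the $L^2$ trace hypotheses in \eqref{hp su u-2}, provide the dominations needed both for this passage to the limit and for the integrations by parts on $\pa_B\C$ to make classical sense; the hypothesis $\psi_{x_j}\in\mathcal{A}$ together with the bounded $y$-support of $\psi$ is exactly what renders $\psi u_{x_j}$ admissible in the linearized identity.
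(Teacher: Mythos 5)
Your proposal follows essentially the same route as the paper's proof: the paper's Lemma~\ref{CL} and Corollary~\ref{HJ:cor:pho} implement your "test the linearized equation against $\psi^2 u_{x_j}$ and sum" step (with the lateral-flux identification done in Lemma~\ref{abbiamo}), while the stability inequality is applied to $\varphi=|\nabla_x u|\psi$ and the Kato identity is used to cancel the cross, $g_u$, and $f'$ terms exactly as you describe. The only cosmetic differences are that the paper derives the linearized identity directly from the weak formulation \eqref{WEAK} rather than differentiating the strong equation, and it asserts admissibility of $|\nabla_x u|\psi$ from \eqref{hp su u-2} rather than passing through your $\sqrt{|\nabla_x u|^2+\eps^2}$ regularization (both are legitimate).
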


We remark that the weights in~\eqref{POI:FO}
have a simple, concrete interpretation
in terms of the level sets of the solution~$u$.
As a matter of fact, fixed~$y>0$,
if~$(x,y)\in\{u=c\}\cap\{\nabla_x u\ne 0\}$,
then the $c$-level set of~$u(\cdot, y)$ in the vicinity
of~$(x,y)$ is a smooth $(n-1)$-dimensional
manifold~${\mathcal{S}}_y$ in~$\Omega\times\{y\}$, and we can therefore
consider the tangential gradient~$\nabla_{{\mathcal{S}}_y}$ along~${\mathcal{S}}_y$
and the principal curvatures~$\kappa_1,\dots,\kappa_{n-1}$.
In this way, one can consider the norm of
the second fundamental form, i.e.
$$ {\mathcal{K}}:=\sqrt{ \sum_{i=1}^{n-1} \kappa_i^2}$$
and bound the weight on the left hand side of~\eqref{POI:FO}
in terms of these quantities.

More explicitly (see formula~(1.20) in~\cite{SirVal2}), one has that on $\mathcal{C} \cap \{\nabla_x u \neq 0\}$ it results
\begin{eqnarray*}
&& \sum_{j=1}^{n}
\langle {\mathcal{B}}(y,\nabla u)\,\nabla u_{x_j},
\nabla u_{x_j}\rangle
-\langle \mathcal{B}(y,\nabla u)
\nabla |\nabla_x u|,\, \nabla |\nabla_x u|\rangle\\
&=& a(y,|\nabla u|)\,{\mathcal{K}}_0
+\frac{a_t(y,|\nabla u|)}{|\nabla u|}\,{\mathcal{K}}_\sharp,
\end{eqnarray*}
where
\begin{eqnarray*}
&& {\mathcal{K}}_0 := \sum_{j=1}^n u_{x_j\,y}^2 - (\partial_y
|\nabla_x u|)^2
+{\mathcal{K}}^2\,|\nabla_x u|^2+
\big|\nabla_{{\mathcal{S}}_y} |\nabla_x u|\big|^2
\\ {\mbox{and }}
&& {\mathcal{K}}_\sharp := \sum_{j=1}^n (\nabla u\cdot \nabla u_{x_j})^2
-(\nabla u\cdot \nabla |\nabla_x u|)^2.
\end{eqnarray*}
Since $\nabla u_{x_j}= 0 = \nabla |\nabla_x u|$ for almost every point in $\{\nabla_x u=0\}$, 
this type of inequalities has also
a deep relevance for rigidity and symmetry results,
as pointed out by~\cite{Farina}, see also~\cite{FaScVa, SirVal1, SirVal2}.

\subsection{Classification of stable solutions in convex domains}

One of the main goal of this paper is to classify
stable solutions of~\eqref{general problem}
under suitable assumptions
either on the domain or on the nonlinearities. In this spirit, the 
first result that
we present concerns classification
in convex domains. 

\begin{theorem}\label{thm: symmetry convex 1}
Let $n\ge2$, $\Omega \subset \R^n$ be convex,
with strictly positive principal curvatures along~$\partial\Omega$.

Let $u$ be a stable solution of \eqref{general problem} satisfying the
energy bound, for any $R\ge1$,
\begin{equation}\label{energy bound}
\int_{\Omega \times (0,R)} a(y,|\nabla u|) 
\,|\nabla_x u|^2 \le C R^2,
\end{equation}
for some constant $C>0$ independent of $R$. 

Then $u$ depends only on $y$.\end{theorem}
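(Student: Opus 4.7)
The plan is to apply the geometric Poincar\'e-type inequality of Theorem~\ref{TH:POI} with a cutoff $\psi=\psi_R(y)$ depending only on the vertical variable. With such a choice, the two terms on the left-hand side of~\eqref{POI:FO} turn out to be non-negative in our setting, while the right-hand side can be driven to zero as $R\to\infty$ thanks to the borderline growth in~\eqref{energy bound}. Once both sides of~\eqref{POI:FO} are forced to vanish, a convexity argument will upgrade the pointwise information to the conclusion $\nabla_x u\equiv 0$.

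First I would verify that the two terms on the left of~\eqref{POI:FO} are pointwise non-negative. For the boundary term, differentiating the Neumann condition $\tilde\nu\cdot\nabla_x u\equiv 0$ along tangential directions yields
\[
\nabla u\cdot\partial_\nu(\nabla u)=-II_{\partial\Omega}(\nabla_x u,\nabla_x u)\qquad\text{on }\partial_L\C,
\]
because $\partial_\nu u_y=\partial_y(\partial_\nu u)=0$ kills the vertical contribution; the strict positivity of the principal curvatures of~$\partial\Omega$ then produces a constant~$c>0$ with $-\nabla u\cdot\partial_\nu(\nabla u)\ge c\,|\nabla_x u|^{2}$. For the bulk integrand, using the identity $\nabla|\nabla_x u|=\sum_j(u_{x_j}/|\nabla_x u|)\nabla u_{x_j}$ on $\{\nabla_x u\ne 0\}$ and introducing the symmetric matrix $M_{jk}:=\langle\mathcal{B}(y,\nabla u)\nabla u_{x_j},\nabla u_{x_k}\rangle$, the integrand equals $\mathrm{tr}\,M-v^{T}Mv$ with $|v|=1$; since $\mathcal{B}$ is positive definite by~\eqref{ELLIPTICITY}, $M$ is positive semidefinite and hence $\mathrm{tr}\,M\ge\lambda_{\max}(M)\ge v^{T}Mv$.

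Having taken care of the signs, I would plug $\psi=\psi_R(y)$ into~\eqref{POI:FO}. Then $\psi_{x_j}=0\in\mathcal{A}$, $\nabla\psi$ is purely vertical, and~\eqref{ELLIPTICITY:2} gives $\langle\mathcal{B}(y,\nabla u)\nabla\psi,\nabla\psi\rangle\le C\,a(y,|\nabla u|)\,|\psi_R'(y)|^{2}$. Setting $\rho(y):=\int_{\Omega}a(y,|\nabla u|)|\nabla_x u|^{2}\,dx$, the right-hand side of~\eqref{POI:FO} is at most $C\int_{0}^{\infty}|\psi_R'|^{2}\rho$, while the energy bound~\eqref{energy bound} reads $E(R):=\int_{0}^{R}\rho(y)\,dy\le CR^{2}$. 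This is exactly the threshold growth dealt with by the logarithmic cutoff
\[
\psi_R(y):=1\ \text{on }[0,\sqrt R],\qquad \psi_R(y):=\frac{2\log(R/y)}{\log R}\ \text{on }[\sqrt R,R],\qquad \psi_R(y):=0\ \text{on }[R,\infty),
\]
mollified so as to fit the $C^{1}$ requirement. A Stieltjes integration by parts with $\rho(y)\,dy=dE(y)$, combined with $E(y)\le Cy^{2}$, then yields $\int_{0}^{\infty}|\psi_R'|^{2}\rho\le C/\log R\to 0$ as $R\to\infty$.

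Letting $R\to\infty$ forces the two non-negative quantities on the left-hand side of~\eqref{POI:FO} to vanish. The boundary term gives $\nabla_x u\equiv 0$ on~$\partial_L\C$, whereas the bulk term, read through the curvature identity displayed right after Theorem~\ref{TH:POI}, gives $a\,\mathcal{K}^{2}|\nabla_x u|^{2}=0$ and $|\nabla_{\mathcal{S}_y}|\nabla_x u||=0$ almost everywhere on $\{\nabla_x u\ne 0\}$: on each horizontal slice the portions of the level sets of~$u$ lying in $\{\nabla_x u\ne 0\}$ are therefore pieces of affine hyperplanes along which $|\nabla_x u|$ is constant. Since $\nabla_x u$ vanishes on the strictly convex hypersurface~$\partial\Omega$, such planar pieces cannot fit in~$\overline\Omega$, forcing $\nabla_x u\equiv 0$ in~$\C$ and hence the desired conclusion. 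I expect this last geometric step to be the most delicate, and it is precisely where the \emph{strict} positivity of the principal curvatures (rather than mere convexity) enters in an essential way; the logarithmic cutoff computation and the pointwise non-negativity verifications are, by comparison, of a standard flavour.
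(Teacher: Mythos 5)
Your proposal is correct and follows essentially the same route as the paper's proof: apply Theorem~\ref{TH:POI} with a logarithmic cutoff in the vertical variable $y$ (the paper uses $\psi_R(y)=\int_y^R \tau_R(\zeta)\zeta^{-1}\,d\zeta$ and divides by $(\log\frac{\sqrt R}{2})^2$ at the end, which is the same device as your normalized $\psi_R$), verify non-negativity of the two terms on the left via Lemma~\ref{abbiamo} and~\eqref{OIhh}, show the right-hand side is $O(1/\log R)$ from the borderline growth~\eqref{energy bound} via the analogue of Lemma~\ref{herein}, and then extract the geometric conclusion that level sets are planar and $|\nabla_x u|$ is constant along them. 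The only noteworthy divergence is in the final geometric step: the paper first shows (via Lemma~\ref{con:PROI} and the strict convexity) that $u$ is constant on $\partial\Omega\times\{\bar y\}$, then slides a line in a level hyperplane to the boundary to force $u\equiv c(\bar y)$; you instead note that the closure of a planar level piece must meet $\partial\Omega$, where $|\nabla_x u|=0$, contradicting that $|\nabla_x u|$ is a positive constant on (the closure of) the piece by continuity. Your variant is a bit more direct and sidesteps the explicit use of path-connectedness of $\partial\Omega$; it relies on the same ingredients and is a legitimate shortcut.
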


We remark that
assumption \eqref{energy bound} is satisfied, 
for instance, if $a$ grows in~$y$
at infinity as~$y^\aaa$ (with~$\aaa\in(-1,1)$)
and~$|\nabla_x u|$ has growth bounded by~$y^{\frac{1-\aaa}{2}}$;
in particular, unbounded solutions may be also taken
into account. 

We also stress that, in general (and differently from the setting
in~\cite{CH}),
it is not possible to deduce, in the setting
of Theorem~\ref{thm: symmetry convex 1}, that the solution~$u$
is constant (as a counterexample, one may consider the
case in which~$u:=y$, $a:=1$, $f:=-1$, $g:=0$; clearly, $u$ is stable being a harmonic function).\medskip

We think it is an interesting problem to detect the maximal generality
under which this type of results holds true. To this aim, we observe that
it is possible to obtain the same result removing the assumption
of strict positivity of the curvature of the domain, but adding an integrability condition,
as stated in the following result:

\begin{theorem}\label{P-CA}
Let $\Omega\subset\R^n$ be convex, and let $u$ be a stable
solution of \eqref{general problem}, satisfying the integrability assumption 
\begin{equation}\label{integrabilty assumption}
a(y,|\nabla u|)\, 
\Big( |\nabla u|^2 + |D^2_x u|^2 +|\nabla_x u_y|^2\Big) + \big|
g_u(y,u)\big|\, |\nabla_x u|^2 
\in L^1(\C).
\end{equation}
Then $u$ depends only on $y$.
\end{theorem}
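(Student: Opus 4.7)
The plan is to apply the geometric Poincar\'e formula \eqref{POI:FO} with a cutoff depending only on~$y$, use the convexity of~$\Omega$ to turn the left-hand side into a sum of non-negative integrands, and let the cutoff escape to infinity using the $L^1$-integrability in \eqref{integrabilty assumption}.

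First I would fix $\psi_R\in C^1([0,+\infty))$ with $\psi_R\equiv 1$ on $[0,R]$, $\psi_R\equiv 0$ on $[2R,+\infty)$ and $|\psi_R'|\le C/R$, and test \eqref{POI:FO} with $\psi(X):=\psi_R(y)$; this is admissible since $\psi_{x_j}\equiv 0\in\mathcal{A}$ trivially. Because $\nabla\psi=\psi_R'(y)\,e_{n+1}$ and by \eqref{ELLIPTICITY:2}, the right-hand side of \eqref{POI:FO} is dominated by
\[
\frac{C}{R^2}\int_{\Omega\times(R,2R)}a(y,|\nabla u|)\,|\nabla u|^2,
\]
which tends to $0$ as $R\to+\infty$ by \eqref{integrabilty assumption}. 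Next, differentiating the Neumann identity $\nabla_x u\cdot\tilde\nu=0$ tangentially along $\partial\Omega$ produces the classical pointwise formula $\nabla u\cdot\partial_\nu(\nabla u)=-\mathrm{II}_{\partial\Omega}(\nabla_x u,\nabla_x u)$ on $\partial_L\C$, where $\mathrm{II}_{\partial\Omega}$ is the second fundamental form; convexity ($\mathrm{II}_{\partial\Omega}\ge 0$) therefore makes the boundary contribution on the left of \eqref{POI:FO} non-negative. For the bulk integrand, writing $\nabla|\nabla_x u|=\sum_j(u_{x_j}/|\nabla_x u|)\nabla u_{x_j}$ on $\{\nabla_x u\ne 0\}$ and using the positive definiteness of $\mathcal{B}$ (Lemma~\ref{B-POS}) together with the elementary bound $\alpha^\top M\alpha\le\mathrm{tr}\,M$ valid for unit vectors~$\alpha$ and positive semidefinite~$M$, one obtains
\[
\sum_{j=1}^{n}\langle\mathcal{B}(y,\nabla u)\nabla u_{x_j},\nabla u_{x_j}\rangle-\langle\mathcal{B}(y,\nabla u)\nabla|\nabla_x u|,\nabla|\nabla_x u|\rangle\ge 0\quad\text{a.e. in } \C.
\]
Sending $R\to+\infty$ and invoking Fatou's lemma then forces both this bulk integrand and the boundary integrand $a(y,|\nabla u|)\,\mathrm{II}_{\partial\Omega}(\nabla_x u,\nabla_x u)$ on $\partial_L\C$ to vanish identically.

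The final task, which I expect to be the main obstacle, is to extract $\nabla_x u\equiv 0$ from this rigidity under mere convexity of~$\Omega$ (rather than the strict convexity of Theorem~\ref{thm: symmetry convex 1}). On the open set $\{\nabla_x u\ne 0\}$, the explicit forms of $\mathcal{K}_0$ and $\mathcal{K}_\sharp$ recalled in the excerpt give $\mathcal{K}\equiv 0$ and $\nabla_{\mathcal{S}_y}|\nabla_x u|\equiv 0$, so each $x$-level set of $u(\cdot,y)$ is a piece of hyperplane along which $|\nabla_x u|$ is a positive constant; the Neumann condition forces $\nabla_x u$ to be tangent to $\partial\Omega$ at boundary points of such a level set, while $\mathrm{II}_{\partial\Omega}(\nabla_x u,\nabla_x u)=0$ forces those tangent directions to be flat directions of~$\partial\Omega$. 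Since $\Omega$ is bounded, prolonging the hyperplane across $\Omega$ is incompatible with $|\nabla_x u|$ remaining a nonzero constant along a full chord meeting $\partial\Omega$ at two boundary points with this compatibility property, yielding the desired contradiction and hence $u=u(y)$.
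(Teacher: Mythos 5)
Your opening steps are correct and essentially mirror the paper's setup (cutoffs, vanishing of the right-hand side by the integrability assumption, non-negativity of the bulk term by Lemma~\ref{B-POS} and of the boundary term by Lemma~\ref{abbiamo}). However, the final step---extracting $\nabla_x u\equiv 0$ from the rigidity under mere convexity of~$\Omega$---has a genuine gap, and in fact the Poincar\'e-inequality route you are following cannot close it.

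The clearest obstruction is the case $n=1$, which is squarely within the scope of Theorem~\ref{P-CA} and which the paper's own counterexamples on $\Omega=(0,2\pi)$ show to be the critical regime. For $n=1$ the bulk quantity
\[
\sum_{j=1}^{n}\langle\mathcal{B}(y,\nabla u)\nabla u_{x_j},\nabla u_{x_j}\rangle-\langle\mathcal{B}(y,\nabla u)\nabla|\nabla_x u|,\nabla|\nabla_x u|\rangle
\]
vanishes identically on $\{\nabla_x u\ne 0\}$ (there is only one term in the sum and $\nabla|u_{x_1}|=\sgn(u_{x_1})\nabla u_{x_1}$), and the boundary integrand $\nabla u\cdot\partial_\nu(\nabla u)=-u_{x_1}u_{x_1 x_1}\tilde\nu$ also vanishes because the Neumann condition forces $u_{x_1}=0$ on $\partial\Omega$. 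Thus the Poincar\'e identity \eqref{POI:FO} collapses to $0\le 0$ and yields no information whatsoever. The same difficulty persists for $n\ge 2$ on any convex $\Omega$ with flat boundary pieces: on a flat piece $\mathrm{II}_{\partial\Omega}\equiv 0$ identically, so the constraint ``$\mathrm{II}_{\partial\Omega}(\nabla_x u,\nabla_x u)=0$'' is vacuous there and cannot be used to force $\nabla_x u$ to vanish on the boundary. Your closing sentence about ``prolonging the hyperplane across $\Omega$'' being ``incompatible'' is not substantiated, and the 1-D case shows no such incompatibility argument can exist at this level of generality. This is exactly why Theorem~\ref{thm: symmetry convex 1} requires strictly positive principal curvatures, while Theorem~\ref{P-CA} needs a different mechanism.

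The paper's actual mechanism is a strong maximum principle argument rather than a Poincar\'e inequality. One enlarges the admissible class to $\mathcal{A}^*$ (Lemma~\ref{lem: density stability}) so that \eqref{integrabilty assumption} makes each $u_{x_j}$ an admissible competitor, and then uses the linearized identity of Corollary~\ref{HJ:cor:pho-2} together with Lemma~\ref{abbiamo} to conclude $\sum_j I(u_{x_j})=\int_{\partial_L\C}a(y,|\nabla u|)\,(\nabla u\cdot\partial_\nu\nabla u)\le 0$, hence $I(u_{x_j})=0$ for every $j$. This means $u_{x_j}$ realizes the minimum of the stability quadratic form, and the key tool is Proposition~\ref{LK:87hhj:C} (built on the Hopf lemma and a Harnack inequality): a minimizer of $I$ either has a strict sign in $\overline\Omega\times(0,+\infty)$ or vanishes identically. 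Finally, sliding a hyperplane orthogonal to $e_j$ until it touches $\partial\Omega$ at a point $x_j^\star$ where $\tilde\nu=e_j$, the Neumann condition gives $u_{x_j}(x_j^\star,1)=0$, which excludes the strict-sign alternative and forces $u_{x_j}\equiv 0$. This spectral/maximum-principle route is what replaces strict curvature by integrability, and it has no analogue inside the Poincar\'e-type argument you are trying to run.
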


We observe that the stability condition 
in Theorem~\ref{P-CA} cannot be dropped.
As an example, one can consider the domain~$\Omega:=(0,2\pi)\subset\R$
and the function~$u:\Omega\times[0,+\infty)$ given by~$u(x,y):=
e^{-y}\,\cos x$.
Then, we see that~$u$ is a solution of~\eqref{general problem}
with~$a:=1$, $g:=0$ and~$f(u):=u$; also, it satisfies~\eqref{integrabilty assumption},
but it is not a function only of~$y$ (comparing with Theorem~\ref{P-CA},
we have that~$u$ is not stable, and the stability assumption cannot be removed).
\medskip

Moreover, we observe that
assumption \eqref{integrabilty assumption} in Theorem~\ref{P-CA} cannot be dropped.
As an example, one can consider the domain~$\Omega:=(0,2\pi)\subset\R$
and the function~$u:\Omega\times[0,+\infty)$ given by~$u(x,y):=
e^{y}\,\cos x$.
Then, we see that~$u$ is a solution of~\eqref{general problem}
with~$a:=1$, $g:=0$ and~$f(u):=-u$; therefore, $f'=-1$ and so $u$ is stable
(but it does not
satisfy~\eqref{integrabilty assumption}, and it does not depend only on $y$,
showing that Theorem~\ref{P-CA} is optimal in this sense).
\medskip

It is also worth to observe that \eqref{integrabilty assumption} can be 
considerably weakened when $a(y,t)$ is independent of $t$ and $g \equiv 
0$. This case is particularly interesting, as we will discuss
in the forthcoming Section~\ref{KH:9veto}.
%
%
\medskip 

In the case of classical elliptic equations, a classification
of stable solutions in convex domains with homogeneous Neumann boundary data
was given in~\cite{CH}.
Indeed, our Theorems~\ref{thm: symmetry convex 1} and \ref{P-CA} may be seen as the extension
of Theorem~2 of~\cite{CH} to the case of boundary reaction-diffusion
equations.

In many concrete cases,
once one knows that the solution only depends on~$y$
(as given for instance by Theorem~\ref{P-CA}),
then~\eqref{general problem} simplifies and can be often explicitly
integrated. For instance, if~$u=u(y)$ and~$a=a(y)$ only depend on~$y$,
and~$g$ vanishes identically,
then~\eqref{general problem} reduces to an ordinary
differential equation which provides the family of solutions
\begin{equation} \label{O76:98}
u(y)=c-f(c)\,\int_0^y \frac{d\zeta}{a(\zeta)},\end{equation}
for $c\in\R$.
We also remark that in the model case
in which~$a(y)=y^\aaa$, with~$\aaa\in(-1,1)$,
the functions~$u$ 
of the form~\eqref{O76:98} that satisfy~$au_y^2\in L^1(\C)$
are the constants
(see also Lemma~4.10 in~\cite{FaScVa}
for classification results of
ordinary
differential equations).

Moreover, we stress that, in general,
stable solutions of \eqref{general problem}
are \emph{not necessarily constant}. 
As an example, one can consider~$u(x,y):=e^{-y}$,
$a(y,t):= e^y$, $f:=1$ and~$g:=0$
(notice that in this case
\eqref{general problem}, \eqref{hp su u-2}
and~\eqref{integrabilty assumption}
are all satisfied; moreover, since $g \equiv 0$ and $f' \le 0$, 
and recalling Lemma~\ref{B-POS},
then we see that~$u$
is stable, in the sense of~\eqref{stability}). In this sense,
Theorems~\ref{thm: symmetry convex 1} and~\ref{P-CA} are optimal.
\medskip

Though the statements of
Theorems~\ref{thm: symmetry convex 1} and~\ref{P-CA}
are somehow similar, we prove them by different methods.
Indeed, the proof of Theorem~\ref{thm: symmetry convex 1} 
relies on the 
Poincar\'e-type geometric inequality
stated in Theorem~\ref{TH:POI} (by the choice of
an appropriate test function), while the proof of 
Theorem~\ref{P-CA} is based on the relation between maximum
principles and stability conditions in view of a suitable
spectral analysis.

\subsection{Classification of bounded stable solutions for convex/concave
nonlinearities}

Now we address the problem of classifying stable
solutions if the nonlinearity $f$ is either convex or concave. To this aim, we shall make the assumption that $g \equiv 0$.
The precise result obtained is the following:

\begin{theorem}\label{TH:CC:1}
Assume that
\begin{equation}\label{LO}
{\mbox{$a_t(y,t)\le0$ and $g(y,t) = 0$ for any~$t$, $y>0$.}}\end{equation}
Let $u$ be a bounded and stable solution
of~\eqref{general problem}, such that
\begin{equation}\label{NUOVE}
\begin{split}
a(y,|\nabla u|) |\nabla u|^2 \in L^1(\C) ,\\ 
\lim_{R\to+\infty}\frac{1}{R^2}
\int_{\Omega\times(R,2R)} a(y,|\nabla u|) =0 \\
{\mbox{and }}\qquad
a(y,|\nabla u|)\, \pa_y u \in C(\overline\Omega \times [0,+\infty))
.\end{split}
\end{equation}
If either $f$ is strictly convex, or $f$ is strictly concave, then~$u$ is constant in $\C$.
\end{theorem}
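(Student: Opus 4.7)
The plan is to derive integral identities at $\partial_B\C$ from the weak formulation~\eqref{WEAK} and the stability inequality~\eqref{stability} by testing against carefully chosen cutoff functions, and then to close the argument using the quantitative strict convexity (or concavity) of $f$ on the compact range of the bounded solution~$u$.

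I first pick a $y$--cutoff $\eta_R\in C^\infty([0,\infty))$ with $\eta_R\equiv 1$ on $[0,R]$, $\eta_R\equiv 0$ on $[2R,\infty)$ and $|\eta_R'|\le C/R$. Testing~\eqref{WEAK} with $\varphi=\eta_R(y)$ and using $g\equiv 0$, Cauchy--Schwarz bounds the interior contribution by $\bigl(\int_\C a|\nabla u|^2\bigr)^{1/2}\bigl(R^{-2}\int_{\Omega\times(R,2R)}a\bigr)^{1/2}$, which vanishes as $R\to\infty$ by~\eqref{NUOVE}; the cutoff limit then gives $\int_\Omega f(u_0)\,dx=0$, where $u_0:=u(\cdot,0)$. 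An analogous cutoff test of~\eqref{stability} with the same $\varphi=\eta_R$ exploits $a_t\le 0$ through the pointwise inequality $\langle\mathcal{B}(y,\nabla u)\nabla\eta_R,\nabla\eta_R\rangle\le a(y,|\nabla u|)(\eta_R')^2$ to produce $\int_\Omega f'(u_0)\,dx\le 0$.

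I then refine these tests. Taking $\varphi=u\,\eta_R$ in~\eqref{WEAK} and handling the cross term by Cauchy--Schwarz together with the boundedness of $u$ produces the energy identity $\int_\C a|\nabla u|^2\,dx\,dy=\int_\Omega f(u_0)u_0\,dx$. Similarly, setting $\bar u_0:=|\Omega|^{-1}\int_\Omega u_0$ and using $\varphi=(u-\bar u_0)\eta_R$ in~\eqref{stability}, the tail and cross contributions vanish by the same mechanism, so
\[\int_\C(a+a_t|\nabla u|)|\nabla u|^2\,dx\,dy\;\ge\;\int_\Omega f'(u_0)(u_0-\bar u_0)^2\,dx.\]
Because $a_t\le 0$ gives $(a+a_t|\nabla u|)|\nabla u|^2\le a|\nabla u|^2$ pointwise, chaining the last inequality with the energy identity yields
\[\int_\Omega f(u_0)u_0\,dx\;\ge\;\int_\Omega f'(u_0)(u_0-\bar u_0)^2\,dx.\]

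Assume $f$ is strictly convex (the concave case is entirely symmetric). Since $u$ is bounded, $f''\ge\mu>0$ on the compact range of $u_0$, and Taylor's theorem around $\bar u_0$ combined with $\int_\Omega f(u_0)=0$ gives the quantitative Jensen bound $|\Omega|f(\bar u_0)+\tfrac\mu2\int_\Omega(u_0-\bar u_0)^2\le 0$. Similar second--order expansions of $f(u_0)u_0$ and $f'(u_0)(u_0-\bar u_0)^2$, substituted into the previous displayed inequality and combined with $\int_\Omega f'(u_0)\le 0$ and the uniform lower bound on $f''$, should force $\int_\Omega(u_0-\bar u_0)^2=0$. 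Hence $u_0\equiv c$ is constant, and then $f(c)=0$ by the first integral identity. Inserting $u_0\equiv c$ into the energy identity yields $\int_\C a|\nabla u|^2=0$, so $\nabla u\equiv 0$ and $u\equiv c$ throughout $\C$.

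The main technical difficulty I anticipate is the final closure, in which the sign--indefinite cubic remainders $\int(u_0-\bar u_0)^3$ that arise from the Taylor expansions of $f(u_0)u_0$ and $f'(u_0)(u_0-\bar u_0)^2$ must be absorbed by the coercive quadratic contribution produced by $f''\ge\mu$. The sign information $\int_\Omega f'(u_0)\le 0$ should pin down the linear part on the stability side, while the energy identity provides a matching quantitative upper bound on the equation side; the structural assumption $a_t\le 0$ is essential throughout, since it is what allows one to pass from $\mathcal{B}$ to the scalar $a$ in every cutoff estimate.
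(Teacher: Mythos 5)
Your preliminary integral relations are correct: testing \eqref{WEAK} and \eqref{stability} against $\eta_R$, $u\eta_R$ and $(u-\bar u_0)\eta_R$, and using $a_t\le 0$ together with the second hypothesis in \eqref{NUOVE}, does indeed give
$\int_\Omega f(u_0)=0$, $\int_\Omega f'(u_0)\le 0$, $\int_\C a|\nabla u|^2 = \int_\Omega f(u_0)\,u_0$, and the chained inequality $\int_\Omega f(u_0)(u_0-\bar u_0)\ge\int_\Omega f'(u_0)(u_0-\bar u_0)^2$. The problem is the final step, which you flag as a ``technical difficulty'' but which is in fact fatal: these relations alone do not force $u_0$ to be constant, and no Taylor expansion around $\bar u_0$ can close the argument. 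The obstruction is structural. Since $u_0-\bar u_0$ changes sign, multiplying the convexity inequality $f(u_0)-f'(u_0)(u_0-\bar u_0)\le f(\bar u_0)$ by $(u_0-\bar u_0)$ destroys the inequality on $\{u_0<\bar u_0\}$, and the cubic remainders you worry about are genuinely uncontrollable. A concrete counterexample to the purely algebraic implication: take $f(t)=t^2-1$ (strictly convex) and $u_0$ a two-valued step function equal to $+1$ and $-1$ on sets of equal measure. Then $\bar u_0=0$, $\int_\Omega f(u_0)=0$, $\int_\Omega f'(u_0)=0\le 0$, and $\int_\Omega f(u_0)(u_0-\bar u_0)=0=\int_\Omega f'(u_0)(u_0-\bar u_0)^2$, so all your displayed relations hold, yet $u_0$ is nonconstant. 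Hence the conclusion cannot follow from the relations you derived.

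The paper avoids this by expanding around $c:=\inf_\C u$ rather than the mean, which makes the multiplier $u_0-c\ge 0$ of one fixed sign. This requires two auxiliary facts your proposal never establishes or uses: first, that $\inf_\C u = \min_{\overline\Omega}u_0$ (a comparison argument with $(\ell-u)^+$ as test function, relying again on the second line of \eqref{NUOVE}); and second, that $f(c)\le 0$, which is precisely where the third hypothesis of \eqref{NUOVE} --- the continuity of $a(y,|\nabla u|)\,\partial_y u$ up to $y=0$ --- enters, via a one--dimensional integration in $y$ at a minimum point of $u_0$. With these two facts, multiplying $f(u_0)-f'(u_0)(u_0-c)\le f(c)$ (strict off $\{u_0=c\}$) by the nonnegative factor $u_0-c$ and integrating, then comparing with the stability and energy relations (the paper does this by testing against $(u-c)\tau_R$ and $(u-c)\tau_R^2$ and subtracting), yields $0\le\int_\Omega f(c)(u_0-c)$ with strict inequality unless $u_0\equiv c$; since $f(c)\le 0$ and $u_0\ge c$, this forces $u_0\equiv c$, and then the energy identity gives $\int_\C a|\nabla u|^2=0$. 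The fact that your proposal makes no use of the continuity assumption on $a\,\partial_y u$ is a strong signal that the crucial sign information on $f(c)$ is missing. To repair the argument you must replace $\bar u_0$ by $c=\inf_\C u$, prove $\inf_\C u=\min u_0$ and $f(c)\le 0$, and redo the stability test with $\varphi=(u-c)\eta_R$; the concave case is then handled symmetrically with $c:=\sup_\C u$ and $f(c)\ge 0$, not merely by relabelling.
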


We remark that Theorem~\ref{TH:CC:1}
is proved here under the additional
assumption in~\eqref{LO}, stating that~$a$ is nonincreasing
with respect to the variable~$t$.
This assumption is of course satisfied in all the cases in
which~$a$ is independent of~$t$, 
that is, if one is considering
semilinear reaction-diffusion equation.
Nevertheless, we remark that condition~\eqref{LO}
is satisfied also in the case of important quasilinear
reaction-diffusion equations, such as the one
driven by mean curvature-type operators, in which
$$ a(y,t)=\frac{y^\aaa}{\sqrt{1+t^2}},$$
with~$\aaa\in(-1,1)$.
We think that it is an interesting open problem to
decide for which type of quasilinear reaction-diffusion equations
similar statements hold true.

In the case of elliptic equations with inner reaction,
the classification of stable solutions
with Neumann data under suitable convexity or concavity assumptions
on the nonlinear term was obtained in~\cite{CH}.
In this sense, our Theorems~\ref{TH:CC:1}
is the extension of Theorem~3
of~\cite{CH} to the
boundary reaction-diffusion equation in~\eqref{general problem}.

\subsection{Application to nonlocal Neumann problems}\label{KH:9veto}

Now we discuss the classification of stable
solutions in a problem driven by the square root of the Laplacian
in the spectral sense.
For this scope,
let $\{\varphi_k: k \in \N \cup \{0\}\}$ and $\{\lambda_k: k \in \N \cup \{0\}\}$ be the eigenfunctions and the
eigenvalues of $-\Delta$ in $\Omega$ with homogeneous 
Neumann conditions on $\pa \Omega$. We normalize the sequence of
eigenfunctions in such a way that they form an orthonormal basis of~$L^2(\Omega)$.

The Neumann Laplacian $-\Delta_N$ is the operator acting on an $L^2(\Omega)$-function 
$$ w(x)= \sum_{k=0}^\infty w_k \,\varphi_k(x),$$
where
$$ w_k:=\int_\Omega w(x)\,\varphi_k(x)\,dx,$$ 
as
\[
-\Delta_N w(x): = \sum_{k=0}^\infty \lambda_k \,w_k\, \varphi_k(x).
\]
Then, for $s \in (0,1)$, the $s$-Neumann Laplacian is given by 
\begin{equation}\label{HG:sn}
(-\Delta_N)^s w(x):= \sum_{k=0}^\infty \lambda_k^s\, w_k\, \varphi_k(x).
\end{equation}
With the language of the semigroups introduced in \cite{Stinga}, it is possible to show that $(-\Delta_N)^s$ is a nonlocal operator. 

From now on we focus on the case $s=1/2$
and, given~$f\in C^{2,\alpha}(\R)$, we consider the semilinear equation
\begin{equation}\label{s-Neumann}
\begin{cases}
(-\Delta_N)^{1/2} v = f(v) & \text{in $\Omega$} \\
\pa_\nu v=0 & \text{on $\partial \Omega$}.
\end{cases}
\end{equation}
The problem can be considered in weak sense, namely 
we consider the space
\[
H^{1/2}(\Omega):= \left\{ w =\sum_{k=0}^{+\infty} w_k\varphi_k
\in L^2(\Omega) {\mbox{ s.t. }}\sum_{k=0}^{+\infty} \lambda_k^{1/2} |w_k|^2  <+\infty\right\}.
\]
Then we say that~$v$ is a solution 
of~\eqref{s-Neumann} if 
$$v=\sum_{k=0}^{+\infty} v_k\varphi_k \in H^{1/2}(\Omega)$$
and
$$ \sum_{k=0}^{+\infty} \lambda_k^{1/2}\,v_k \,\zeta_k
=
\int_{\Omega} f(v(x))\,\zeta(x)\,dx\qquad
\;{\mbox{ for any }}
\zeta=\sum_{k=0}^{+\infty} \zeta_k\varphi_k \in H^{1/2}(\Omega).$$
We observe that the latter integral makes sense under some assumption on $f$ or on $v$. Since $f$ is continuous and $v$ will always be bounded in the sequel, it is well defined.
Also, thanks to the results in \cite{Stinga, StinVolz} (see also \cite{MonPelVer}), the previous nonlocal problem is related to the following local
one, with boundary reaction:
\begin{equation}\label{s-Neumann extended}
\begin{cases}
\Delta u = 0 & \text{in $\C$} \\
\pa_\nu u=0 & \text{on $\pa_L \C$} \\
-\pa_y u = f(u) & \text{on $\pa_B \C$}.
\end{cases}
\end{equation}
More precisely,
let us define $\mathcal{H}(\C)$ as the completion of $H^1(\C)$ with respect to the scalar product
\[
(u_1,u_2)_{ {\mathcal{H}}(\C) }:= \int_{\C} \nabla u_1 \cdot \nabla u_2 + \int_{\Omega} u_1 u_2,
\]
where $u_i|_{\Omega}$ has to be understood in the sense of traces (notice that this is possible, see Section 2 in \cite{StinVolz}). It results that~$
\mathcal{H}(\C) \supset H^1(\C)$ (notice in particular that constant functions are in $\mathcal{H}(\C)$ but not in $H^1(\C)$; for this reason, $
\mathcal{H}(\C)$ is a more
suitable
space than $H^1(\C)$ to set \eqref{s-Neumann extended} in weak sense). Then a weak solution $v$ to \eqref{s-Neumann} can be defined as the trace over $\Omega$ of a function $u \in \mathcal{H}(\C)$ such that
\begin{equation}\label{po4rBH}
\int_{\C} \nabla u \cdot \nabla \varphi - \int_{\Omega\times\{0\}} f(u) \varphi = 0 \qquad {\mbox{for all }} \varphi \in \mathcal{H}(\C),
\end{equation}
see again \cite{StinVolz}. Notice that this setting
falls exactly under the general setting considered in \eqref{general problem},
with~$a\equiv1$ and~$g\equiv0$
(compare~\eqref{po4rBH} with~\eqref{WEAK}).
\medskip

In this framework, we say that a
solution~$v \in H^{1/2}(\Omega)$ to \eqref{s-Neumann}
is \emph{stable} if its extension $u \in \mathcal{H}(\C)$ 
in~\eqref{s-Neumann extended}
is stable
according to~\eqref{stability} (with ${\mathcal{B}}$ the identity
matrix and $g \equiv 0$), i.e.
\begin{equation}\label{stability:EXT:TY}
\int_{\C} 
|\nabla \varphi|^2
-\int_{\Omega\times\{0\}} f'(u) \varphi^2 \ge 0
\end{equation}
for any $\varphi\in {\mathcal{A}}$.
\medskip

With this definition,
we can prove the following classification theorems
for stable solutions to \eqref{s-Neumann}.

\begin{theorem}\label{thm: s-Neumann 1}
Let $\Omega\subset\R^n$ be convex,
and let $v \in H^{1/2}(\Omega) \cap L^\infty(\Omega)$ be a stable solution to \eqref{s-Neumann}. Then $v$ is constant.
\end{theorem}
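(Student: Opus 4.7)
The plan is to pass to the harmonic extension and apply Theorem \ref{P-CA}. Let $u \in \mathcal{H}(\C)$ be the function solving \eqref{s-Neumann extended} with trace $v$ on $\Omega \times \{0\}$; by the discussion surrounding \eqref{po4rBH}, $u$ is a stable weak solution of \eqref{general problem} in the case $a\equiv 1$, $g\equiv 0$. If I can establish the global integrability \eqref{integrabilty assumption}, which here reduces to
\[
|\nabla u|^2 + |D_x^2 u|^2 + |\nabla_x u_y|^2 \in L^1(\C),
\]
then Theorem \ref{P-CA} yields $u = u(y)$, and since $v(x) = u(x,0)$ is the trace of $u$, $v$ must be constant. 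There is no need to work any harder on the conclusion once $u$ is known to depend only on $y$.

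The main obstacle is the verification of this global integrability. I would handle it through the spectral representation
\[
u(x,y) \;=\; v_0\,\varphi_0 + \sum_{k \ge 1} v_k\, e^{-\sqrt{\lambda_k}\, y}\,\varphi_k(x),
\]
in which the non-constant modes decay exponentially in $y$. Combining $\int_0^\infty e^{-2\sqrt{\lambda_k} y}\,\de y = (2\sqrt{\lambda_k})^{-1}$ with the identity $\|\nabla \varphi_k\|_{L^2(\Omega)}^2 = \lambda_k$ and the Neumann elliptic bound $\|D^2 \varphi_k\|_{L^2(\Omega)}^2 \lesssim \lambda_k^2$ (valid on the $C^{4,\alpha}$ convex domain $\Omega$), each of the three $L^2$ norms above reduces to a tail sum controlled by $\sum_{k \ge 1} v_k^2\,\lambda_k^{3/2}$, whose finiteness amounts to $v \in H^{3/2}(\Omega)$.

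This extra regularity comes from a standard bootstrap based on \eqref{s-Neumann}. From the eigenfunction identity $\lambda_k^{1/2} v_k = [f(v)]_k$ supplied by \eqref{HG:sn}, one has $\|v\|_{H^1(\Omega)}^2 = \sum_k \lambda_k v_k^2 = \|f(v)\|_{L^2(\Omega)}^2$, which is finite since $v \in L^\infty(\Omega)$ and $f \in C^{2,\alpha}$. Iterating (using $v \in H^m \cap L^\infty$ and the smoothness of $f$ to conclude $f(v) \in H^m$, and hence $v \in H^{m+1}$) gives $v \in H^s(\Omega)$ for every $s\ge 0$, which is far more than the $H^{3/2}$ needed. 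The local parts of \eqref{hp su u-2} follow in turn from classical Schauder theory applied to the harmonic extension with $C^{2,\alpha}$ boundary datum $f(v)$ on $\Omega \times \{0\}$. Once all the hypotheses of Theorem \ref{P-CA} are verified, the proof concludes as in the first paragraph.
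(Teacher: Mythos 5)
Your high-level plan is the paper's: pass to the harmonic extension $u$ on the cylinder, verify that $u$ satisfies the hypotheses of Theorem~\ref{P-CA}, and conclude. Where you diverge is in the key technical step, namely the verification of the global integrability~\eqref{integrabilty assumption}. The paper (Lemma~\ref{lem: agg}) proves exponential decay in $y$ of $\tilde u := u - v_0\varphi_0$ and of its derivatives, combining the Weyl-law lower bound $\lambda_k\gtrsim k^\beta$, the decay $\lambda_k|h_k|\lesssim 1$ of the Fourier coefficients of $h=f(v)$ (obtained by integrating $\Delta$ by parts and using $\partial_\nu h=f'(v)\,\partial_\nu v=0$), and $L^\infty$ eigenfunction bounds from Moser iteration; the small-$y$ contribution is then handled via $u\in C^{3,\alpha}(\overline\C)$, cited from \cite{StinVolz}. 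Your route replaces this by a direct spectral computation of the $L^2(\C)$ norms and a Sobolev bootstrap through the identity $\sqrt{\lambda_k}\,v_k=h_k$, reducing everything to a Sobolev regularity statement for $v$; this is genuinely different and avoids the Weyl and Moser-iteration machinery altogether.

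Two points need patching. (i) The family $\{D^2\varphi_k\}_k$ is not mutually $L^2(\Omega)$-orthogonal, so the announced reduction of $\|D^2_x u\|_{L^2(\C)}^2$ and $\|\nabla_x u_y\|_{L^2(\C)}^2$ to $\sum_k v_k^2\lambda_k^{3/2}$ is not automatic; it is recovered by the Neumann elliptic estimate $\|D^2_x\tilde u(\cdot,y)\|_{L^2(\Omega)}\lesssim\|\Delta_x\tilde u(\cdot,y)\|_{L^2(\Omega)}+\|\tilde u(\cdot,y)\|_{L^2(\Omega)}$, with $y$-independent constant, whose right-hand side \emph{is} an orthogonal series. (ii) The claim ``$v\in H^s(\Omega)$ for every $s\ge 0$'' overreaches: with $f\in C^{2,\alpha}$ the composition step $v\in H^m\cap L^\infty\Rightarrow f(v)\in H^m$ stops at $m=2$ (giving $v\in H^3$). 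This is immaterial for the integrability needed in Theorem~\ref{P-CA}, but you must also verify \eqref{hp su u-2}, which involves $D^3_x u$ and $D^2_x u_y$ and hence requires $v\in H^{5/2}$; $H^3$ does supply this, although the cleaner option, taken by the paper, is to invoke the $u\in C^{3,\alpha}(\overline\C)$ regularity from \cite{StinVolz}. Your closing appeal to ``classical Schauder theory with $C^{2,\alpha}$ boundary datum $f(v)$'' is the right idea but under-justified: the boundary condition is a nonlinear Neumann one, not a Dirichlet one, and establishing $f(v)\in C^{2,\alpha}$ from $v\in L^\infty$ is precisely the bootstrap that needs to be carried out (or cited).
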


Theorem \ref{thm: s-Neumann 1}
establishes that equation~\eqref{s-Neumann} 
in convex domains does not admit noncostant stable solutions.
The same conclusion holds, if, instead of the convexity of $\Omega$, 
we assume the convexity (or the concavity) of the nonlinearity $f$,
according to the following result.

\begin{theorem}\label{thm: s-Neumann 2}
Let $f$ be either strictly convex, or strictly concave, and let $v \in H^{1/2}(\Omega) \cap L^\infty(\Omega)$ be a stable solution to \eqref{s-Neumann}. Then $v$ is constant.
\end{theorem}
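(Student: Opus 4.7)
The plan is to reduce Theorem \ref{thm: s-Neumann 2} to Theorem \ref{TH:CC:1} applied to the harmonic extension of $v$. Given $v \in H^{1/2}(\Omega) \cap L^\infty(\Omega)$ a stable solution of \eqref{s-Neumann}, one associates the function $u \in \mathcal{H}(\C)$ solving \eqref{s-Neumann extended} in the weak sense \eqref{po4rBH}; this is exactly \eqref{general problem} with $a\equiv 1$ and $g\equiv 0$. By definition, stability of $v$ means stability of $u$ in the sense of \eqref{stability:EXT:TY}. Since condition \eqref{LO} is trivially satisfied ($a_t\equiv0$ and $g\equiv0$), and $f$ is strictly convex or concave by hypothesis, it suffices to verify the structural assumptions in \eqref{hp su u-2} and the integrability/continuity requirements \eqref{NUOVE}, after which Theorem \ref{TH:CC:1} yields that $u$ is constant on $\C$; tracing to $\Omega\times\{0\}$ then gives the desired conclusion for $v$.

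The first step is to produce $u$ concretely via the spectral representation: writing $v=\sum_{k\ge 0} v_k\varphi_k$, set
\[
u(x,y) := \sum_{k=0}^{\infty} v_k\, e^{-\sqrt{\lambda_k}\, y}\,\varphi_k(x).
\]
A direct computation using orthonormality of $\{\varphi_k\}$ gives $\int_{\C} |\nabla u|^2 = \sum_{k\ge 0} v_k^2\sqrt{\lambda_k}<+\infty$ since $v\in H^{1/2}(\Omega)$. This verifies the first condition in \eqref{NUOVE}, namely $a(y,|\nabla u|)|\nabla u|^2=|\nabla u|^2 \in L^1(\C)$. The second condition is immediate, because $a\equiv 1$ gives
\[
\frac{1}{R^2}\int_{\Omega\times(R,2R)} a(y,|\nabla u|) = \frac{|\Omega|}{R} \xrightarrow[R\to\infty]{} 0.
\]
The boundedness of $u$ follows from $v\in L^\infty(\Omega)$ by the maximum principle applied to the harmonic function $u$ with Neumann lateral conditions and bounded trace on $\partial_B\C$ (alternatively, by the Poisson-type representation formula for the extension).

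For the remaining regularity requirements---that is, $u\in C^2(\overline{\Omega}\times(0,+\infty))$ together with the conditions in \eqref{hp su u-2}, and the continuity up to $\partial_B\C$ of $\partial_y u$ needed in the last line of \eqref{NUOVE}---one invokes classical boundary regularity for harmonic functions with mixed homogeneous Neumann / nonlinear Neumann conditions: since $f\in C^{2,\alpha}$ and $u$ is bounded, a standard bootstrap (see \cite{NIR,NIR2}) gives the needed smoothness in the interior and along $\partial_L \C$, while interior Schauder-type arguments away from the edge $\partial\Omega\times\{0\}$, combined with the fact that $v$ is $C^{2,\alpha}$ in $\Omega$ (as the boundary trace of a harmonic function satisfying a $C^{2,\alpha}$ Neumann equation), produce $C^1$ regularity of $\partial_y u$ on $\overline{\Omega}\times[0,+\infty)$. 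In particular $a(y,|\nabla u|)\partial_y u=\partial_y u$ is continuous there, completing \eqref{NUOVE}.

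With all hypotheses verified, Theorem \ref{TH:CC:1} applies and forces $u$ to be constant on $\C$, whence $v\equiv u|_{\Omega\times\{0\}}$ is constant on $\Omega$. The main obstacle in this scheme is not the algebraic reduction, which is essentially automatic once $a\equiv 1$ and $g\equiv 0$, but rather the regularity package: one must confirm that the spectral definition of stability for $v$ translates into stability of $u$ in the sense of \eqref{stability} for test functions in the class $\mathcal{A}$ (not just in $\mathcal{H}(\C)$), and that the extension has enough smoothness for \eqref{hp su u-2} and the continuity condition in \eqref{NUOVE} to hold. These points are technical but follow from the extension theory developed in \cite{MonPelVer, Stinga, StinVolz} together with elliptic boundary regularity for the Neumann problem.
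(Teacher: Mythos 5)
Your proposal is correct and follows the paper's overall strategy (extend $v$ to a harmonic $u$, verify the hypotheses of Theorem~\ref{TH:CC:1} with $a\equiv1$, $g\equiv0$, and conclude), but it sidesteps one piece of machinery the paper uses. The paper proves a standalone lemma (Lemma~\ref{lem: agg}) establishing exponential decay of $u$ and its derivatives in $y$ via the Weyl asymptotic formula and a Moser iteration bounding $\|\varphi_k\|_{L^\infty(\Omega)}$; this yields the stronger condition~\eqref{integrabilty assumption}, which is what Theorem~\ref{thm: s-Neumann 1} needs, and then the hypotheses~\eqref{NUOVE} of Theorem~\ref{TH:CC:1} follow as a by-product together with the $C^{3,\alpha}(\overline{\C})$ regularity supplied by the Stinga--Volzone bootstrap. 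Since Theorem~\ref{TH:CC:1} only requires~\eqref{NUOVE}, your direct spectral computation
\[
\int_{\C}|\nabla u|^2=\sum_{k\ge 0} v_k^2\sqrt{\lambda_k}<+\infty
\]
from $v\in H^{1/2}(\Omega)$ is a shorter and cleaner route to the first condition in~\eqref{NUOVE}, and the second is trivial since $a\equiv1$; this buys a more elementary argument that avoids Weyl asymptotics entirely, at the price of not yielding~\eqref{integrabilty assumption} (which you do not need here). Two small caveats: the stability translation you flag as a potential issue is actually a tautology, because the paper literally \emph{defines} stability of $v$ as stability of the extension $u$ in the sense of~\eqref{stability} against test functions in $\mathcal{A}$, so nothing needs confirming there; and the regularity discussion (needed for~\eqref{hp su u-2} and for $\partial_y u\in C(\overline\Omega\times[0,+\infty))$) is left at the level of an appeal to ``classical boundary regularity,'' whereas the paper pins this down by invoking Theorem~3.5--part~4 of~\cite{StinVolz} together with higher-order Schauder estimates to get $u\in C^{3,\alpha}(\overline{\C})$. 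Filling in that regularity reference would make the argument match the paper's level of rigor.
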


The previous results can be considered as the counterpart of those in \cite{CH} for \eqref{s-Neumann}. Clearly, a natural question consist in finding easy and natural assumptions on $f$ or on $v$ allowing to show that $v$ is stable in the sense of~\eqref{stability:EXT:TY}. A very simple condition consists in $f' \le 0$.

We also point out that, in our framework, Theorems~\ref{thm: s-Neumann 1}
and~\ref{thm: s-Neumann 2} will be obtained by using
Theorems~\ref{P-CA}
and~\ref{TH:CC:1}, respectively.\medskip

As a further remark, we observe that
if $v \in H^{1/2}(\Omega) \cap L^\infty(\Omega)$, 
then, by \cite[Theorem 3.5]{StinVolz}, we have that~$
v \in \mathcal{C}^1(\overline{\Omega})$. Therefore the boundary condition $\pa_\nu v=0$ on $\pa \Omega$ can be understood in the classical sense.\medskip

We also mention that our focus
on the case $s=1/2$ is due to the fact that
we recalled and used some results contained in \cite{StinVolz}. 
Once that similar results are established
for the case $s \neq 1/2$ (this is 
announced in \cite{StinVolz}), our results would also hold
for the general case~$s\in (0,1)$.
Indeed, for $s \in(0,1)$, the extension problem associated to the 
$s$-Neumann Laplacian will be of type \eqref{general problem} with 
$a(y,t)=y^{\aaa}$, with~$\aaa \in (-1,1)$, and $g \equiv 0$. 

\subsection{A counterexample in a different nonlocal setting}

In Section~\ref{KH:9veto},
we have considered classification results for
stable solutions of spectral versions of
fractional Laplacians (in the sense given by~\eqref{HG:sn}).

In the literature, other nonlocal elliptic operators of fractional
type have been widely studied.
Of particular interest is the integral version of
the fractional Laplacian, defined (up to normalizing constants),
for any~$s\in(0,1)$, as
\begin{equation}\label{HG:sn:2}
(-\Delta)^s v (x):= \pv \int_{\R^n} 
\frac{v(x)-v(y)}{|x-y|^{n+2s}}\,dy. \end{equation}
As usual, $\textrm{pv}$ stays for 
the principal value. We stress that the operators in~\eqref{HG:sn}
and~\eqref{HG:sn:2} are indeed different (see e.g.~\cite{different}).

In this setting, a natural fractional 
normal derivative at the boundary
(see e.g. \cite{ROS}) is given by
\[
(\pa_\nu)^s v(x): = \lim_{t \to 0^+} \frac{v(x+t \tilde\nu(x))-v(x)}{t^s},
\]
where $\tilde \nu(x)$ denotes the outer unit normal to $\pa \Omega$ at $x \in \pa \Omega$.

With this, one may wonder
whether ``nice'' and ``stable'' solutions to the equation
\begin{equation}\label{HG:sn:2:PRE} 
\begin{cases}
(-\Delta)^s v=f(v) & \text{in $\Omega$} \\
(\pa_\nu)^s v=0 & \text{on $\partial \Omega$}
\end{cases}
\end{equation}
in convex domains or with convex nonlinearities are necessarily constant,
or, at least, if they enjoy some rigid geometric properties.\medskip

While a suitable notion of stability should be introduced
in this setting, the further assumption that~$f\equiv0$
would imply stability in any reasonable definition, thus
the basic question boils down to determine any
rigidity properties of solutions of
\begin{equation}\label{GaTf7y}
\begin{cases}
(-\Delta)^s v=0 & \text{in $\Omega$} \\
(\pa_\nu)^s v=0 & \text{on $\partial \Omega$},
\end{cases}
\end{equation}
possibly in convex domains.\medskip

Quite surprisingly, we now show that no classification
(and even no rigidity) results hold true for equation~\eqref{GaTf7y}.
This phenomenon shows that
the ``right'' choice of fractional operator, endowed with
the appropriate boundary conditions, plays
a crucial role in nonlocal problems. \medskip

In concrete, the result that we show is the following:

\begin{example}\label{EXAMPLE}
Let~$s\in(0,1)$, $h\in C^2(\R)$ and~$\epsilon\in(0,1)$. Then, there exist~$\delta_1,\delta_2\in
\left[0,\frac{\epsilon}{2}\right]$
and~$v\in C^2( (-1-\epsilon,1+\epsilon))\cap C(\R)$ such that
\begin{equation*}
\begin{split}
& \|v-h\|_{C^2 ((-1,1))}\le\epsilon,\\
& (-\partial^2_x)^s v(x):= 
\pv\int_\R \frac{v(x)-v(y)}{|x-y|^{1+2s}}\,dy=0
{\mbox{ for any }} x\in(-1-\delta_1,1+\delta_2),\\
& (\partial_\nu)^s v(x)= \lim_{\substack{ y\in (-1-\delta_1,1+\delta_2)\\ y\to x}}
\frac{v(x)-v(y)}{|x-y|^s}=0
{\mbox{ for any }} x\in\{-1-\delta_1,1+\delta_2\},\\
& v'(-1-\delta_1)=v'(1+\delta_2)=0,\\
& {\mbox{$v$ has bounded support.}}
\end{split}
\end{equation*}
\end{example}

We remark that the operator~$(-\partial^2_x)^s$
is simply~$(-\Delta)^s$, as defined in~\eqref{HG:sn:2},
when the domain is one-dimensional.
Also,
the ``fractional'' boundary derivative~$(\partial_\nu)^s$
has nice regularity properties
and natural applications in Pohozaev-type identities
and in rigidity results for overdetermined problems
(see e.g.~\cite{Grubb, ROS, ROS-2, FALL, Nicola});  nevertheless
it cannot characterize solutions~$v$ of the
fractional equation~\eqref{HG:sn:2:PRE} in convex domains,
which, as stated in Example~\ref{EXAMPLE}, at least for~$f\equiv0$,
can have essentially the same local qualitative properties
of any prescribed function~$h$. 

\subsection{Organization of the paper}

The rest of the paper is organized as follows.
In Section~\ref{sec: preliminaries},
we collect some preliminary computations
that are needed in the proofs of the main results. In Section~\ref{SEC:POINC-2}, we prove
the Poincar\'e-type geometric inequality
stated in Theorem~\ref{TH:POI}. 
The classification of solutions to \eqref{general problem} when $\Omega$ 
is convex, together with the proofs
of Theorems~\ref{thm: symmetry convex 1}
and~\ref{P-CA},
is contained in Sections~\ref{KJ:8a9} and~\ref{HG:SEC}. The 
proof of Theorems~\ref{TH:CC:1}, with the
classification of stable solutions in case of convex/concave
nonlinearities, is contained in Section~\ref{LK:COCO-0}. Section \ref{sec: s-Neumann} is devoted to the study of classification results involving the spectral $s$-Neumann Laplacian $(-\Delta_N)^s$. Section~\ref{HJ:AO} contains the discussion related to
Example~\ref{EXAMPLE}. 

\section{Toolbox}\label{sec: preliminaries}

In this section we collect several intermediate statements which will be used in the proof of our main results.

\subsection{Some inequalities coming from the Neumann condition}

Next result deals with the geometric analysis
related to functions satisfying a Neumann condition.

\begin{lemma}\label{abbiamo}
Let $\Omega\subset\R^n$ be an open set with boundary of class~$C^2$.
Let $u\in C^2(\overline\Omega\times(0,+\infty))$,
with~$\partial_\nu u=0$ on~$\partial\Omega \times(0,+\infty)$.

Assume that~$\bar x=(\bar x',\bar x_n) \in \partial \Omega$ and that
in a neighborhood of~$\bar x$ the domain~$\Omega$ can be
written in normal coordinates as the epigraph of a function~$\gamma\in
C^2(\R^{n-1})$, i.e.
$$ \Omega\cap B_r (\bar x)=
\{ x=(x',x_n)\in B_r (\bar x)
{\mbox{ s.t. }}x_n >\gamma(x')\},$$
for some~$r>0$, with~$\gamma(\bar x')=\bar x_n$ and~$\nabla\gamma(\bar x')=0$.
Then,
for any~$y>0$,
\begin{equation}\label{key-eq-EUCLIDEAN}
\begin{split}
&\nabla u(\bar x,y) \cdot \partial_\nu \left(\nabla u(\bar x,y)\right)
=\nabla_x u(\bar x,y) \cdot \partial_\nu \left(\nabla_x u(\bar x,y)\right)
\\ &\qquad= -\sum_{i,j=1}^{n-1}\gamma_{x_i x_j}
(\bar x')\,u_{x_i}(\bar x,y)\,u_{x_j}(\bar x,y)
.\end{split}\end{equation}
In particular, if~$\Omega$ is convex, then
\begin{equation}\label{key-eq-EUCLIDEAN-2}
\nabla u(\bar x,y) \cdot \partial_\nu \left(\nabla u(\bar x,y)\right)
=\nabla_x u(\bar x,y) \cdot \partial_\nu \left(\nabla_x u(\bar x,y)\right)
\le0.\end{equation}
\end{lemma}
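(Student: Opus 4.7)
The plan is to reduce the first equality to an observation about the $y$-derivative, then use the normal-coordinate parametrisation to convert the Neumann identity on $\partial\Omega$ into a pointwise relation among the second derivatives of $u$ at $\bar x$, from which the formula follows by a direct computation. The final convexity assertion will then be immediate since the convexity of $\Omega$ forces the Hessian of the local graph $\gamma$ at $\bar x'$ to be positive semidefinite.

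For the first equality, I would begin by writing $\nabla u = (\nabla_x u, u_y)$ and, since $\nu = (\tilde\nu,0)$, noting that $\partial_\nu(\nabla u) = (\partial_\nu \nabla_x u,\, \partial_\nu u_y)$. Because $\partial_\nu u = 0$ holds along all of $\partial\Omega\times(0,+\infty)$ and $\nu$ has no $y$-component, one can differentiate this identity in $y$ (the regularity $u\in C^2$ is exactly what is needed) to conclude $\partial_\nu u_y=0$ on $\partial\Omega\times(0,+\infty)$. This kills the $y$-component of the dot product and yields
\[
\nabla u(\bar x,y)\cdot\partial_\nu(\nabla u(\bar x,y)) = \nabla_x u(\bar x,y)\cdot\partial_\nu(\nabla_x u(\bar x,y)).
\]

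For the key identity, I would work in the chosen normal coordinates. Since $\Omega\cap B_r(\bar x) = \{x_n>\gamma(x')\}$ with $\nabla\gamma(\bar x')=0$, the outer unit normal on $\partial\Omega$ is $\tilde\nu(x') = (\nabla\gamma(x'),-1)/\sqrt{1+|\nabla\gamma(x')|^2}$, so at $\bar x$ itself we have $\tilde\nu(\bar x)=-e_n$. The Neumann condition $\partial_\nu u=0$ on $\partial\Omega\times(0,+\infty)$ thus reads, for each fixed $y>0$,
\[
\sum_{i=1}^{n-1}\gamma_{x_i}(x')\,u_{x_i}\bigl(x',\gamma(x'),y\bigr) - u_{x_n}\bigl(x',\gamma(x'),y\bigr)=0,
\]
for all $x'$ near $\bar x'$. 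Differentiating this identity in $x_j$ for $j=1,\dots,n-1$ and then evaluating at $\bar x'$, all the terms containing factors of $\gamma_{x_i}(\bar x')$ drop out, leaving
\[
u_{x_j x_n}(\bar x,y) = \sum_{i=1}^{n-1}\gamma_{x_i x_j}(\bar x')\,u_{x_i}(\bar x,y), \qquad j=1,\dots,n-1.
\]
I would also record that, evaluating the Neumann condition itself at $x'=\bar x'$, one gets $u_{x_n}(\bar x,y)=0$.

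Now I would just compute: since $\tilde\nu(\bar x)=-e_n$,
\[
\nabla_x u(\bar x,y)\cdot\partial_\nu(\nabla_x u(\bar x,y)) = -\sum_{k=1}^{n} u_{x_k}(\bar x,y)\,u_{x_k x_n}(\bar x,y).
\]
The $k=n$ term vanishes because $u_{x_n}(\bar x,y)=0$, and substituting the relation above into the terms $k=1,\dots,n-1$ yields exactly $-\sum_{i,j=1}^{n-1}\gamma_{x_i x_j}(\bar x')\,u_{x_i}(\bar x,y)\,u_{x_j}(\bar x,y)$, which is \eqref{key-eq-EUCLIDEAN}. Finally, if $\Omega$ is convex then the local defining function $\gamma$ at any boundary point is convex, so the Hessian $(\gamma_{x_i x_j}(\bar x'))$ is positive semidefinite; this makes the right-hand side nonpositive and gives \eqref{key-eq-EUCLIDEAN-2}. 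The only mildly delicate step is the bookkeeping in the differentiation of the graph-form Neumann condition and the correct identification of $\tilde\nu(\bar x)=-e_n$; once the condition $\nabla\gamma(\bar x')=0$ is exploited to discard cross-terms, the rest is purely algebraic.
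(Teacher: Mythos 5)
Your proof is correct and takes essentially the same approach as the paper: translate to normal coordinates, differentiate the graph form of the Neumann identity in the tangential directions, exploit $\nabla\gamma(\bar x')=0$, and read off the formula. The only cosmetic difference is in how the $y$-component of the dot product is discarded — the paper first records $u_{x_n}(0,y)=0$ and differentiates that scalar identity in $y$, whereas you differentiate the full Neumann condition $\partial_\nu u=0$ in $y$ (using that $\nu$ is $y$-independent) to get $\partial_\nu u_y=0$ directly; both routes are equivalent and equally rigorous.
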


\begin{proof} Up to a translation, we can assume that~$\bar x=0$. 
Thus, in the vicinity of the origin we can 
write the unit normal vector as 
\begin{equation*}
\nu(x,y) =\frac{1}{\sqrt{|\nabla \gamma(x')|^2 +1}} \left(\nabla 
\gamma(x'), -1,0\right) .\end{equation*}
Therefore, the condition $\partial_\nu u(x,y)=0$, for $x\in\partial\Omega\cap B_r$,
reads as 
\begin{eqnarray*}
\sum_{i=1}^{n-1}u_{x_i}\big(x',\gamma(x'),y\big)\, 
\gamma_{x_i}(x')  - u_{x_n}\big(x',\gamma(x'),y\big) =0.
\end{eqnarray*}
So, taking the derivative with respect to $x_j$ with $j=1,\dots,n-1$, we obtain
\begin{multline*}
\sum_{i=1}^{n-1}u_{x_i x_j}\big(x',\gamma(x'),y\big) 
\gamma_{x_i}(x') \\
+  \sum_{i=1}^{n-1}u_{x_i x_n}\big(x',\gamma(x'),y\big)\,
\gamma_{x_j}(x')\,\gamma_{x_i}(x')  +  
 \sum_{i=1}^{n-1}u_{x_i}\big(x',\gamma(x')\big)\,
\gamma_{x_i x_j}(x')  \\
 - u_{x_j x_n}\big(x',\gamma(x')\big) - u_{x_n x_n}\big(x',\gamma(x'),y\big)\, 
\gamma_{x_j}(x')=0.
\end{multline*}
Hence, recalling that $\nabla\gamma(0') = 0$, we infer that 
$$ \sum_{i=1}^{n-1}u_{x_i}(0,y)\gamma_{x_i x_j}(0,y) - u_{x_j x_n}(0,y)=0,$$
which proves
that,
for any~$y>0$ and any~$j=1,\dots,n-1$,
\begin{equation}\label{Ki78}
u_{x_j x_n}(\bar x,y)=
\sum_{i=1}^{n-1}u_{x_i}(\bar x,y)\gamma_{x_i x_j}(\bar x,y)
.\end{equation}
Now we observe that
\begin{equation}\label{h67889}
{\mbox{
$\nu(0,y)=-e_n$ and so $
u_{x_n}(0,y)=-\partial_\nu u(0,y)=0$.}} \end{equation} 
By differentiating this identity in~$y$, we deduce that
\begin{equation}\label{h67889-2}
u_{x_n y}(0,y)=0.\end{equation}
Moreover, 
\begin{equation}\label{7uhYY}
u_y(0,y)\cdot \partial_\nu u_y (0,y)=
-u_y(0,y)\,u_{x_n y}(0,y)=0. \end{equation}
Therefore, using again~\eqref{h67889}, we see that
\begin{equation}\begin{split} \label{lsaggrthtg} \nabla u(0,y) &\cdot 
\partial_\nu \left(\nabla u(0,y)\right)= -\nabla_x u(0,y)\cdot 
\partial_{x_n} \left(\nabla_x u(0,y)\right) \\& = -\sum_{i=1}^n 
u_{x_i}(0,y) u_{x_i x_n}(0,y) = -\sum_{i=1}^{n-1} u_{x_i}(0,y) 
u_{x_ix_n}(0,y).\end{split}\end{equation}
Now we plug~\eqref{Ki78} into
\eqref{lsaggrthtg}, and we deduce that
$$ \nabla u(0,y) \cdot \partial_\nu \left(\nabla u(0,y)\right) 
= -\sum_{i,j=1}^{n-1}\gamma_{x_i x_j}(0')\,u_{x_i}(0,y)\,u_{x_j}(0,y).$$
{F}rom this and~\eqref{7uhYY}, we obtain~\eqref{key-eq-EUCLIDEAN}. Formula~\eqref{key-eq-EUCLIDEAN-2}
follows from~\eqref{key-eq-EUCLIDEAN}
and the convexity of~$\Omega$ (which boils down to the
convexity of~$\gamma$).
\end{proof}

For completeness, we also recall the following result:

\begin{lemma}\label{con:PROI}
Let~$n\ge2$.
If~$\Omega$ is convex, then~$\partial\Omega$ is pathwise
connected.
\end{lemma}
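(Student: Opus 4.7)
The plan is to exhibit $\partial\Omega$ as the continuous image of the unit sphere $\mathbb{S}^{n-1}$ under a radial projection centered at an interior point, and then invoke pathwise connectedness of $\mathbb{S}^{n-1}$ (which requires $n\ge 2$).

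Fix an interior point $x_0\in\Omega$ (which exists since $\Omega$ is a domain). Since $\Omega$ is bounded and convex, for every direction $\omega\in\mathbb{S}^{n-1}$ the set $T(\omega):=\{t\ge 0 : x_0+t\omega\in\overline\Omega\}$ is a compact interval of the form $[0,t(\omega)]$ with $t(\omega)\in(0,+\infty)$: nonempty because $x_0\in\Omega$, bounded because $\Omega$ is bounded, and a single interval by convexity of $\overline\Omega$. I would then set
\[
\Phi(\omega):=x_0+t(\omega)\,\omega,
\]
and show the following three properties.

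First, $\Phi(\omega)\in\partial\Omega$ for every $\omega\in\mathbb{S}^{n-1}$: it lies in $\overline\Omega$ by definition of $t(\omega)$, and cannot be interior, since then $x_0+(t(\omega)+\eps)\omega$ would still be in $\Omega$ for small $\eps>0$, contradicting maximality. Second, $\Phi$ is surjective onto $\partial\Omega$: any $p\in\partial\Omega$ is hit by the ray from $x_0$ in direction $\omega_p:=(p-x_0)/|p-x_0|$; strict convexity of the segment $[x_0,p]\subset\overline\Omega$, combined with $x_0\in\mathrm{int}\,\Omega$, forces $p=\Phi(\omega_p)$ by the convexity characterization of the support function. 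Third and most delicate, $\Phi$ is continuous: equivalently, $t:\mathbb{S}^{n-1}\to(0,+\infty)$ is continuous. This is the standard statement that the radial (or Minkowski) functional of a bounded convex body with nonempty interior is continuous, and I would derive it directly: if $\omega_k\to\omega$ and $t(\omega_k)\to t^\ast$ along a subsequence, then $x_0+t^\ast\omega\in\overline\Omega$, hence $t^\ast\le t(\omega)$; conversely, for any $0<s<t(\omega)$ the point $x_0+s\omega$ lies in $\mathrm{int}\,\Omega$ (convex combination of $x_0$ and an interior point of $[x_0,x_0+t(\omega)\omega]$), so $s<t(\omega_k)$ for $k$ large, giving $s\le t^\ast$ and hence $t^\ast=t(\omega)$.

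With these three properties established, $\partial\Omega=\Phi(\mathbb{S}^{n-1})$ is the continuous image of the pathwise connected space $\mathbb{S}^{n-1}$ (here we use $n\ge 2$), and is therefore itself pathwise connected. The only genuinely nontrivial step is the continuity of the radial function $t(\cdot)$; everything else is a direct consequence of convexity together with the fact that $\Omega$ is bounded with nonempty interior.
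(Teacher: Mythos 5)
Your proof is correct, and it rests on the same core mechanism as the paper's: radial projection from an interior point onto $\partial\Omega$, whose continuity is the only nontrivial ingredient. The packaging, however, differs. The paper fixes two boundary points $A,B$, takes the segment $S=[A,B]\subset\overline\Omega$, and projects $S$ radially from an interior point $Z$ onto $\partial\Omega$, directly producing a path from $A$ to $B$; it proves uniqueness of the boundary hit but only asserts continuity of the projection. You instead project the whole sphere $\mathbb{S}^{n-1}$ of directions, prove the radial function $t(\cdot)$ is continuous (carefully, via upper semicontinuity from closedness of $\overline\Omega$ and lower semicontinuity from the standard interior-segment lemma), check surjectivity, and then cite that continuous images of pathwise connected spaces are pathwise connected. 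Your route is slightly more global and in fact establishes the stronger statement that $\partial\Omega$ is a continuous (indeed bijective, by the same interval argument) image of $\mathbb{S}^{n-1}$, while the paper's route is more hands-on and builds the path explicitly. One small observation in favor of your version: the paper's construction implicitly assumes $Z\notin S$ (otherwise $r(Z)$ is undefined), which can always be arranged when $n\ge 2$ but is left unsaid; in your version the role of $n\ge 2$ is made explicit and transparent through the connectedness of $\mathbb{S}^{n-1}$. Also, your continuity argument fills in a step that the paper takes for granted. The one slightly hand-wavy spot in your write-up is the surjectivity step, where you invoke ``the convexity characterization of the support function'': it would be cleaner to say directly that if $|p-x_0|<t(\omega_p)$ then $p$ would be a proper convex combination of the interior point $x_0$ and the point $\Phi(\omega_p)\in\overline\Omega$, hence interior to $\Omega$, contradicting $p\in\partial\Omega$. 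That is the same lemma you already use for lower semicontinuity, so no new tool is needed.
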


\begin{proof} Fix~$Z\in \Omega$.
Given two points~$A$, $B\in\partial\Omega$ we will
construct a path joining $A$ to~$B$ and lying on~$\partial\Omega$.
For this, we consider the segment~$S$
that joins $A$ and~$B$. Notice that~$S\subseteq\overline\Omega$,
by convexity. Given any~$X\in S$,
we can consider the halfline~$r(X)$ that emanates from~$Z$
and passes through~$X$. We remark that~$r(X)$
must intersect~$\partial\Omega$, since~$\Omega$ is bounded.
This intersection point must be unique:
indeed, if there are two points~$Y_1$, $Y_2\in \partial\Omega
\cap r(X)$, since~$B_\rho(Z)\subset\Omega$ for some~$\rho>0$,
we have that the convex hull of~$Y_1$, $Y_2$ and~$B_\rho(Z)$
lies in~$\overline\Omega$, and this contradicts that both~$Y_1$
and~$Y_2$ are boundary point.

Therefore, for any~$X\in S$, we can define a continuous
function~$\pi:S\to\partial\Omega$, as the intersection of~$r(X)$
with~$\partial\Omega$. Then, the image of~$S$ via~$\pi$ provides
the desired path lying on~$\partial\Omega$ and joining $A$ to~$B$.
\end{proof}

\subsection{Positive definiteness of~${\mathcal{B}}$}

\begin{lemma}\label{B-POS}
For any~$y>0$ and~$\eta\in\R^{n+1}\setminus\{0\}$,
the matrix~$\mathcal{B}(y,\eta)$
is positive definite.

More precisely, the matrix~$\mathcal{B}(y,\eta)$
has eigenvalues $a(y,|\eta|)+|\eta|\,a_t(y,|\eta|)$
(with multiplicity~$1$) and~$a(y,|\eta|)$ (with multiplicity~$n$).
\end{lemma}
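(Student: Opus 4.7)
The plan is to recognize that $\mathcal{B}(y,\eta)$ is a rank-one update of a multiple of the identity, and then read off its spectrum directly. Writing $\alpha := a(y,|\eta|)$ and $\beta := a_t(y,|\eta|)/|\eta|$, we have
\[
\mathcal{B}(y,\eta) = \alpha\, I_{n+1} + \beta\, \eta\,\eta^{\mathsf{T}},
\]
since $(\eta\eta^{\mathsf{T}})_{ij} = \eta_i\eta_j$. This is the standard format in which the eigenstructure is transparent.

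First, I would observe that any vector $v\in\R^{n+1}$ orthogonal to $\eta$ satisfies $\mathcal{B}(y,\eta)v = \alpha v$, since $(\eta\eta^{\mathsf{T}})v = \eta(\eta\cdot v) = 0$. The orthogonal complement of $\eta$ is $n$-dimensional, so $\alpha = a(y,|\eta|)$ is an eigenvalue of multiplicity at least $n$. Next, I would apply $\mathcal{B}(y,\eta)$ to $\eta$ itself:
\[
\mathcal{B}(y,\eta)\eta = \alpha\eta + \beta\,\eta(\eta\cdot\eta) = \bigl(\alpha + \beta|\eta|^2\bigr)\eta = \bigl(a(y,|\eta|) + |\eta|\,a_t(y,|\eta|)\bigr)\eta,
\]
so that $a(y,|\eta|) + |\eta|\,a_t(y,|\eta|)$ is an eigenvalue with eigenvector $\eta$. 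Since these $n+1$ eigenvectors are mutually orthogonal (the complement of $\eta$ together with $\eta$ itself spans $\R^{n+1}$), we have accounted for the full spectrum and obtained the claimed multiplicities.

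Finally, positive definiteness is immediate from the ellipticity hypothesis \eqref{ELLIPTICITY}: for $y>0$ and $t=|\eta|>0$ both factors $a(y,|\eta|)$ and $a(y,|\eta|)+|\eta|\,a_t(y,|\eta|)$ are strictly positive by assumption, so all eigenvalues of $\mathcal{B}(y,\eta)$ are strictly positive, which is exactly positive definiteness of the symmetric matrix. There is no real obstacle here; the only cosmetic care needed is the convention in \eqref{DEF:B} that the rank-one term is interpreted as zero when $\eta=0$ (which is why the statement excludes $\eta=0$), so the factor $1/|\eta|$ in $\beta$ is harmless.
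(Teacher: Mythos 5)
Your proof is correct and follows essentially the same approach as the paper's: both diagonalize $\mathcal{B}(y,\eta)$ by testing it against $\eta$ itself and against vectors orthogonal to $\eta$, and both deduce positive definiteness from \eqref{ELLIPTICITY}. The only cosmetic difference is that you phrase the matrix as a rank-one update $\alpha I + \beta\,\eta\eta^{\mathsf T}$ while the paper spells out the action on an explicit orthonormal basis; the underlying computation is identical.
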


\begin{proof} The second statement implies the
first one, thanks to~\eqref{ELLIPTICITY}. So we focus on the
proof of the second statement. For this, we fix
an orthonormal basis of~$\R^{n+1}$, say~$\{E_1,\dots,E_{n+1}\}$, such that~$
E_1:= \eta/|\eta|$. We will use this basis to diagonalize the
matrix~$\mathcal{B}(y,\eta)$. Indeed, 
for any~$k=2,\dots,n+1$, we have that~$E_1\cdot E_k=0$. Thus,
for any~$i=1,\dots,n+1$
\begin{align*}
\big( \mathcal{B}(y,\eta)\,E_1\big)_i
&=\sum_{j=1}^{n+1} \mathcal{B}(y,\eta)_{ij}\,\frac{\eta_j}{|\eta|} = a(y,|\eta|) \frac{\eta_i}{|\eta|}+
\sum_{j=1}^{n+1} 
\frac{a_t(y,|\eta|)}{|\eta|^2}\eta_i \eta_j^2 \\
& = \big( a(y,|\eta|)+|\eta|\,a_t(y,|\eta|)\big)\,(E_1)_i,
\end{align*}
while for any~$k=2,\dots,n+1$
\begin{align*}
\big( \mathcal{B}(y,\eta)\,E_k\big)_i
&= a(y,|\eta|) (E_k)_i+
\sum_{j=1}^{n+1}
\frac{a_t(y,|\eta|)}{|\eta|}\eta_i \eta_j (E_k)_j\\
&=
a(y,|\eta|) (E_k)_i+
{a_t(y,|\eta|)}\,\eta_i E_1\cdot E_k =
a(y,|\eta|) (E_k)_i. \qedhere
\end{align*}
\end{proof}

\subsection{Some results available in the literature}

Here we recall some known auxiliary statements, which will
be needed in the proof of our main results
(these statements have been included for the facility of the reader,
to make the paper more self-contained).

The following is a variant of Lemma 10 in \cite{SirVal1}. The proof can be easily obtained modifying the argument therein, and thus is omitted.

\begin{lemma}\label{herein}
Let $R>0$ and $h: \Omega \times (0,R) \to \R$ be a nonnegative measurable function. For any $\rho \in (0,R)$, let
\[
\eta(\rho):= \int_{\Omega \times (0,R)} h.
\]
Then
\[
\int_{\Omega \times (\sqrt{R},R)} \frac{h(X)}{y^2}\,dX \le 2 \int_{\sqrt{R}}^R t^{-3} \eta(t) \,dt + \frac{\eta(R)}{R^2}.
\]
\end{lemma}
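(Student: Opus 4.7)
The statement as written has an obvious typo: the definition of $\eta(\rho)$ should read $\eta(\rho)=\int_{\Omega\times(0,\rho)}h$ (otherwise $\eta$ does not depend on $\rho$ and the conclusion is meaningless). With that reading, the proof is a one–line Fubini computation which I would organize as follows.

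First, I would reduce to a one–variable problem. Setting
\[
\mu(y):=\int_\Omega h(x,y)\,dx,
\]
Fubini gives $\eta(\rho)=\int_0^\rho \mu(y)\,dy$ (in particular $\eta$ is nondecreasing, since $h\ge 0$), and
\[
\int_{\Omega\times(\sqrt R,R)}\frac{h(X)}{y^2}\,dX=\int_{\sqrt R}^R\frac{\mu(y)}{y^2}\,dy.
\]

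Second, I would trade the factor $1/y^2$ for a factor $1/t^3$ by the elementary identity
\[
\frac{1}{y^2}=\frac{1}{R^2}+2\int_y^R\frac{dt}{t^3},\qquad y\in(0,R].
\]
Plugging this in and swapping the order of integration via Fubini (the integrand is nonnegative, so no extra hypothesis is needed), one obtains
\[
\int_{\sqrt R}^R\frac{\mu(y)}{y^2}\,dy=\frac{1}{R^2}\int_{\sqrt R}^R\mu(y)\,dy+2\int_{\sqrt R}^R\frac{1}{t^3}\left(\int_{\sqrt R}^t\mu(y)\,dy\right)dt.
\]

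Third, I would estimate both terms using $\eta$. Since $\mu\ge 0$, one has $\int_{\sqrt R}^t\mu(y)\,dy\le\int_0^t\mu(y)\,dy=\eta(t)$ for every $t\in[\sqrt R,R]$, and similarly $\int_{\sqrt R}^R\mu(y)\,dy\le\eta(R)$. The claimed inequality
\[
\int_{\Omega\times(\sqrt R,R)}\frac{h(X)}{y^2}\,dX\le 2\int_{\sqrt R}^R t^{-3}\eta(t)\,dt+\frac{\eta(R)}{R^2}
\]
follows at once.

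I do not foresee any genuine obstacle: the argument is a single application of Fubini together with the elementary identity for $1/y^2$, and the nonnegativity of $h$ removes any integrability subtlety. The only thing worth flagging is the use of the convention $\eta(\rho)=\int_{\Omega\times(0,\rho)}h$, which I view as a mild reinterpretation of the displayed definition.
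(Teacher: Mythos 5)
Your proof is correct, and you also correctly identified the typo in the displayed definition of $\eta(\rho)$, which must read $\int_{\Omega\times(0,\rho)}h$ for the statement to be meaningful. The paper itself omits the proof of this lemma, deferring to Lemma~10 of \cite{SirVal1}; your argument --- writing $y^{-2}=R^{-2}+2\int_y^R t^{-3}\,dt$ and then applying Tonelli, finally dropping the nonnegative contribution from $(0,\sqrt R)$ to pass from $\int_{\sqrt R}^t\mu$ to $\eta(t)$ --- is the natural and clean way to obtain it, and is essentially equivalent to the integration-by-parts computation $\int_{\sqrt R}^R \eta'(y)y^{-2}\,dy=\eta(R)R^{-2}-\eta(\sqrt R)R^{-1}+2\int_{\sqrt R}^R\eta(y)y^{-3}\,dy$ followed by discarding the negative boundary term.
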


\begin{lemma}[Lemma 4.2 in \cite{SirVal2}]
In $\C\cap \{ \nabla_x u\ne0\}$, it results
\begin{equation}\label{OIhh}
 \sum_{j=1}^n \langle \mathcal{B}(y,\nabla u) \,\nabla u_{x_j},\,
\nabla u_{x_j} \rangle -  \langle \mathcal{B}(y,\nabla u)\,
\nabla |\nabla_x u|, \,\nabla |\nabla_x u| \rangle \ge 0.
\end{equation}
\end{lemma}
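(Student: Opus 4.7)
The plan is to exploit the fact that, by Lemma~\ref{B-POS}, the matrix $\mathcal{B}(y,\nabla u)$ is positive definite on $\{\nabla_x u \neq 0\} \subseteq \{\nabla u \neq 0\}$, so that $\langle \mathcal{B}(y,\nabla u)\,\cdot\,,\,\cdot\,\rangle$ defines a genuine inner product on $\R^{n+1}$. Once this is in place, \eqref{OIhh} reduces to a Pythagoras-type identity under a suitable orthogonal change of basis in the $x$-indices.

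More precisely, I would first observe that, on $\mathcal{C}\cap\{\nabla_x u \neq 0\}$,
\[
\nabla|\nabla_x u| = \sum_{j=1}^n \lambda_j\,\nabla u_{x_j}, \qquad \lambda_j := \frac{u_{x_j}}{|\nabla_x u|},
\]
and that $(\lambda_1,\dots,\lambda_n)$ is a unit vector in $\R^n$. I would then complete $\Lambda_1 := (\lambda_1,\dots,\lambda_n)$ to an orthonormal basis $\{\Lambda_1,\dots,\Lambda_n\}$ of $\R^n$ and set $W_k := \sum_{j=1}^n (\Lambda_k)_j\,\nabla u_{x_j}$ for $k=1,\dots,n$, so that in particular $W_1 = \nabla|\nabla_x u|$.

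Using the orthogonality relation $\sum_{k=1}^n (\Lambda_k)_j(\Lambda_k)_l = \delta_{jl}$, a direct expansion yields
\[
\sum_{k=1}^n \langle \mathcal{B}(y,\nabla u)\, W_k,\, W_k\rangle = \sum_{j=1}^n \langle \mathcal{B}(y,\nabla u)\, \nabla u_{x_j},\, \nabla u_{x_j}\rangle.
\]
Isolating the $k=1$ contribution on the left, the quantity in \eqref{OIhh} becomes
\[
\sum_{k=2}^n \langle \mathcal{B}(y,\nabla u)\, W_k,\, W_k\rangle,
\]
which is nonnegative by the positive (semi-)definiteness of $\mathcal{B}(y,\nabla u)$.

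The main (and essentially only) obstacle I foresee is that a naive approach --- splitting $\mathcal{B} = a(y,|\nabla u|)\,I + (a_t(y,|\nabla u|)/|\nabla u|)\,\nabla u\otimes\nabla u$ and applying Cauchy--Schwarz separately to each piece --- fails because $a_t$ may be negative under \eqref{ELLIPTICITY}, so the $a_t$-term would contribute with the wrong sign. The orthonormal-basis trick above sidesteps this issue by treating $\mathcal{B}(y,\nabla u)$ as a single positive-definite bilinear form from the outset, rather than as a sum of two terms of possibly opposite signs.
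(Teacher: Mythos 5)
Your argument is correct. The key identity $\nabla|\nabla_x u| = \sum_{j=1}^n \lambda_j\,\nabla u_{x_j}$ with $\lambda := \nabla_x u/|\nabla_x u|$ a unit vector in $\R^n$, together with the invariance of $\sum_{j}\langle \mathcal{B}\,V_j,V_j\rangle$ under an orthogonal change of the index basis (which is just $\mathrm{tr}(Q^T G Q)=\mathrm{tr}(G)$ for $Q\in O(n)$ and $G$ the $\mathcal{B}$-Gram matrix of the $\nabla u_{x_j}$), reduces the left-hand side of~\eqref{OIhh} to $\sum_{k\ge 2}\langle\mathcal{B}(y,\nabla u)\,W_k,W_k\rangle$, which is nonnegative because $\mathcal{B}(y,\nabla u)$ is positive definite on $\{\nabla u\ne 0\}\supseteq\{\nabla_x u\ne0\}$ by Lemma~\ref{B-POS}. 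Equivalently, $\langle\mathcal{B}\,\nabla|\nabla_x u|,\nabla|\nabla_x u|\rangle=\lambda^T G\lambda\le\mathrm{tr}(G)$ since $G$ is positive semidefinite and $|\lambda|=1$.

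The paper does not prove this lemma; it cites Lemma~4.2 of~\cite{SirVal2}. What the present paper does record (in the discussion following Theorem~\ref{TH:POI}) is a different, geometric rewriting of the same quantity as $a(y,|\nabla u|)\,\mathcal{K}_0+\frac{a_t(y,|\nabla u|)}{|\nabla u|}\,\mathcal{K}_\sharp$, in terms of the principal curvatures of the level sets and tangential gradients. Your approach has the advantage that the nonnegativity is immediate from positive definiteness of $\mathcal{B}$, without ever splitting into the $a$-part and the $a_t$-part; as you rightly observe, that splitting is dangerous since $a_t$ may be negative under~\eqref{ELLIPTICITY}, so one cannot conclude from $\mathcal{K}_0\ge0$ and $\mathcal{K}_\sharp\ge0$ alone. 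The geometric form, on the other hand, carries more refined information (used via Corollary~4.3 of~\cite{SirVal2} in the proof of Corollary~\ref{Ut:OI} to characterize the equality case); your algebraic proof gives nonnegativity cleanly but does not by itself tell you that vanishing of the left-hand side forces the level sets to be totally geodesic. So the two approaches are complementary: yours is the shortest route to the inequality, while the geometric decomposition is what makes the equality case exploitable.
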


\begin{corollary}\label{Ut:OI}
Let $x_o\in\Omega$ and~$y_o>0$. Assume that
\begin{equation}\label{7uJAJOAOAAAA}
\nabla_x u(x_o,y_o)
\neq 0\end{equation}
and that
\[
\sum_{j=1}^n \langle \mathcal{B}(y,\nabla u)
\nabla u_{x_j}, \nabla u_{x_j} \rangle - 
\langle \mathcal{B}(y,\nabla u) \nabla |\nabla_x u|, \,
\nabla |\nabla_x u| \rangle = 0
\]
almost everywhere in a neighborhood of~$\Omega\times\{y_o\}$. 

Then, each connected component of the
level set
of the function~$\Omega\ni x\mapsto u(x,y_o)$
must be
an $(n-2)$-dimensional hyperplane intersected~$\Omega$.
\end{corollary}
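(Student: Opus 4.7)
The strategy is to extract pointwise geometric information from the vanishing of the quadratic form in the hypothesis by means of the identity, recalled just after Theorem~\ref{TH:POI}, that rewrites it as $a(y,|\nabla u|)\,\mathcal{K}_0+\frac{a_t(y,|\nabla u|)}{|\nabla u|}\,\mathcal{K}_\sharp$ on $\{\nabla_x u\neq 0\}$. The plan has three steps: first, rearrange this into a sum of two manifestly non-negative terms with strictly positive coefficients; second, conclude $\mathcal{K}_0\equiv 0$, which forces the second fundamental form $\mathcal{K}$ of the horizontal slice of the level set at $y=y_o$ to vanish; third, propagate planarity from a neighborhood of $x_o$ to the whole connected component of the level set.

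For the first step, on $\{\nabla_x u\neq 0\}$ I would write
\[
a\,\mathcal{K}_0+\frac{a_t}{|\nabla u|}\,\mathcal{K}_\sharp \;=\; a\left(\mathcal{K}_0-\frac{\mathcal{K}_\sharp}{|\nabla u|^2}\right) +\bigl(a+|\nabla u|\,a_t\bigr)\,\frac{\mathcal{K}_\sharp}{|\nabla u|^2}.
\]
Both coefficients are strictly positive by~\eqref{ELLIPTICITY} (equivalently, by Lemma~\ref{B-POS}). Cauchy--Schwarz gives $\mathcal{K}_\sharp\ge 0$, while the inequality $\mathcal{K}_0\ge\mathcal{K}_\sharp/|\nabla u|^2$ — which is precisely the content of~\eqref{OIhh} — follows from the identity $\nabla|\nabla_x u|=\sum_j (u_{x_j}/|\nabla_x u|)\,\nabla u_{x_j}$ by applying Cauchy--Schwarz to the orthogonal projections of $\nabla u_{x_j}$ onto $(\nabla u)^\perp$. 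Both summands must therefore vanish a.e.\ in the hypothesized neighborhood of $\Omega\times\{y_o\}$ intersected with $\{\nabla_x u\neq 0\}$; in particular, $\mathcal{K}_0\equiv 0$ there.

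For the second step, I would invoke the decomposition of $\mathcal{K}_0$ as the sum of four non-negative pieces (a Cauchy--Schwarz remainder in $y$, the tangential gradient piece $|\nabla_{\mathcal{S}_y}|\nabla_x u||^2$, and the curvature term $\mathcal{K}^2|\nabla_x u|^2$). The vanishing of $\mathcal{K}_0$ forces each piece to vanish; in particular $\mathcal{K}^2\,|\nabla_x u|^2\equiv 0$. By~\eqref{7uJAJOAOAAAA} and continuity, $\nabla_x u\neq 0$ in a neighborhood of $x_o$ on the slice $y=y_o$, so $\mathcal{K}\equiv 0$ there and the corresponding portion of the level set of $u(\cdot,y_o)$ is a smooth embedded hypersurface with identically vanishing second fundamental form; by the classical differential-geometric characterization it lies in an affine hyperplane $\Pi$.

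For the third step, I would set $c:=u(x_o,y_o)$, denote by $L$ the connected component of $\{x\in\Omega:u(x,y_o)=c\}$ through $x_o$, and run a standard open--closed argument inside the regular part $L\cap\{\nabla_x u\neq 0\}$: it is open that a point of the regular part has a relative neighborhood in $L$ contained in $\Pi$, and closed by continuity of the tangent plane, so the whole regular part of $L$ lies in $\Pi$; the remaining points of $L$ (where $\nabla_x u$ vanishes) are limits of points of $\Pi\cap\Omega$ and hence lie in $\Pi\cap\Omega$ by closedness of $\Pi$. The main obstacle I anticipate is precisely this last continuation: critical points of $u(\cdot,y_o)$ on $L$ could in principle disconnect the regular part and leave room for different local hyperplanes on different arcs; I would handle this by fixing $\Pi$ once along a connecting arc and using the closedness of $\Pi\cap\Omega$ to absorb the (negligible) critical set.
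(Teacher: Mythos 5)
Your first two steps are fine and follow the same route as the paper: via Lemma~4.2 and Corollary~4.3 of~\cite{SirVal2}, the vanishing of the quadratic form on $\{\nabla_x u\neq 0\}$ yields both $\mathcal{K}\equiv 0$ (vanishing principal curvatures of the level sets of $u(\cdot,y_o)$) and, crucially, $\nabla_{\mathcal{S}_{y_o}}|\nabla_x u|\equiv 0$. The difficulty is precisely where you flag it, in step three, and your proposed fix does not work: an open--closed argument on $L\cap\{\nabla_x u\neq 0\}$ requires that set to be connected, which is not known a priori, and ``fixing $\Pi$ once along a connecting arc'' through the critical set is exactly what you cannot do without further information, since the arc may have to pass through points where no tangent plane is defined. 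The critical set is not ``negligible'' in any sense you can use here. Moreover, the corollary asserts equality $L=\Pi\cap\Omega$, and your argument at best gives one inclusion.

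The missing ingredient is the second piece of information you extracted but did not use: since $\nabla_{\mathcal{S}_{y_o}}|\nabla_x u|\equiv 0$ on the regular part, $|\nabla_x u|$ is locally constant along $L$ inside the regular region, equal to $c_o:=|\nabla_x u(x_o,y_o)|>0$. This is what the paper exploits. Letting $U_o$ be the maximal neighborhood of $x_o$ in $\Omega$ where $\nabla_x u(\cdot,y_o)\neq 0$, the constancy of $|\nabla_x u|$ along $L\cap U_o$ forces, by continuity, $|\nabla_x u|=c_o>0$ on $L\cap\partial U_o$, which by maximality of $U_o$ shows $L\cap\partial U_o\cap\Omega=\varnothing$. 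Hence $L$ never reaches the critical set at all, and the open--closed argument is unnecessary: any path in $L$ from $x_o$ to a hypothetical point $p\in L\setminus\Pi$ would have to cross $\partial U_o$ inside $\Omega$, a contradiction. The same fact (that $L$ is a smooth embedded hypersurface without boundary in $\Omega$) also gives the reverse inclusion $\Pi\cap\Omega\subseteq L$, which your write-up omits. So the decomposition idea is right, but the continuation step as written is a genuine gap, and the fix requires using the vanishing of the tangential gradient of $|\nabla_x u|$, not just the vanishing of the curvature term.
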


\begin{proof} Let~$L(x_o)$ be the connected component of the level set
$$ \{ x\in\Omega {\mbox{ s.t. }}u(x,y_o)=u(x_o,y_o)\}$$
which contains~$x_o$.
We also take~$U_o$ to be the largest possible
neighborhood of~$x_o$ in~$\Omega$
such that~$\nabla_x u\ne0$ in~$U_o$.

By Corollary 4.3 in \cite{SirVal2} (see in particular the first identity
in~(4.5) there),
we know that all the level sets of the function~$U_o\ni x\mapsto u(x,y_o)$
have vanishing principal curvatures in~$U_o$.

Accordingly, the normal of each of these level
sets is constant, and so each level set must be
contained in a hyperplane (see e.g.
Lemma 2.10 in \cite{FaScVa}). This says that
\begin{equation}\label{YS:001}
L(x_o)\cap U_o\subseteq \{ \omega_o \cdot (x-x_o)=0\},\end{equation}
for a suitable~$\omega_o\in S^{n-2}$.
Notice that~$\omega_o$ here above depends on~$(x_o,y_o)$.

Now we claim that
\begin{equation}\label{YS:002}
L(x_o)=\{ \omega_o \cdot (x-x_o)=0\}\cap\Omega.\end{equation}
Indeed, by
Corollary 4.3 in \cite{SirVal2} (see in particular the second identity
in~(4.5) there), we know that the tangential gradient
of~$|\nabla_x u|$ along~$L(x_o)\cap U_o$ is zero, and so~$|\nabla_x u|$
is constant along~$L(x_o)\cap U_o$. That is, by~\eqref{7uJAJOAOAAAA},
\begin{equation}\label{OJAJOPAFGHJKA}{\mbox{$
|\nabla_x u(x,y_o)|=|\nabla_x u(x_o,y_o)|=:c_o >0$
for any~$x\in L(x_o)\cap U_o$}}.\end{equation}
Hence, by continuity, we have that~$|\nabla_x u(x,y_o)|=c_o>0$
for any~$x\in L(x_o)\cap (\partial U_o)$. This (and the maximality
assumption on~$U_o$) imply that
\begin{equation}\label{YS:003}
{\mbox{$L(x_o)\cap (\partial U_o)$ is empty inside~$\Omega$.}}\end{equation}
This implies that
\begin{equation}\label{YS:004}
L(x_o)\subseteq\{ \omega_o \cdot (x-x_o)=0\}.\end{equation}
Indeed, if not, there would exist~$p\in L(x_o)\setminus\{ \omega_o \cdot (x-x_o)=0\}$.
By~\eqref{YS:001}, we know that~$p\in\Omega\setminus U_o$.
By considering a path joining~$p$ to~$x_o$ inside~$L(x_o)$,
we would then find a point in~$L(x_o)\cap(\partial U_o)$. This
would contradict~\eqref{YS:003}, and thus~\eqref{YS:004}
is proved.

Now, since~$L(x_o)$ is a subset of~$\Omega$, we can write~\eqref{YS:004} as
\begin{equation}\label{YS:005}
L(x_o)\subseteq\{ \omega_o \cdot (x-x_o)=0\}\cap\Omega.\end{equation}
Hence, to complete the proof
of~\eqref{YS:002}, we need to show the converse inclusion.
To this aim, one uses~\eqref{OJAJOPAFGHJKA} and~\eqref{YS:003}, to write~$L(x_o)$
as a smooth manifold without boundary in~$\Omega$. Such manifold, in view of~\eqref{YS:005},
must coincide with~$\{ \omega_o \cdot (x-x_o)=0\}$,
thus proving~\eqref{YS:002}, which is the desired result.
\end{proof}

\subsection{The linearized equation}

Now we consider the so-called linearized equation,
that is the equation satisfied by the derivatives
of the solution in the variables~$x$
(this equation is clearly related with the
stability condition in~\eqref{stability}).
The result needed for our purposes is the following:

\begin{lemma}\label{CL}
Assume that~$u$ is a solution of~\eqref{general problem}.
Then, for any~$j=1,\dots,n$, we have that~$u_{x_j}$
satisfies 
\begin{multline*}
\int_\C
\Big[
\langle {\mathcal{B}}(y,\nabla u)\,\nabla u_{x_j}, \nabla\psi\rangle
+g_u(y,u)\,u_{x_j}\,\psi\Big]
\\ \qquad
-\int_{\partial_L\C}
\Big[ a(y,|\nabla u|)\,(\nabla u \cdot \nabla\psi)\,\nu_j
+g(y,u)\,\psi \,\nu_j\Big]\\=
\int_{\partial_B\C} f'(u)\,u_{x_j}\,\psi
-\int_{\partial \Omega\times\{0\}} f(u)\,\psi \,\nu_j
\end{multline*}
for any $\psi \in \mathcal{A}$ 
and and such that~$\psi_{x_j}\in {\mathcal{A}}$ for any~$j=1,\dots,n$.
\end{lemma}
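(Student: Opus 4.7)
The plan is to test the weak formulation \eqref{WEAK} against $\varphi:=-\psi_{x_j}$ (which belongs to $\mathcal{A}$ by hypothesis on $\psi$) and integrate by parts in the $x_j$-variable in each of the three resulting integrals, transferring the differentiation off $\psi$ onto the coefficients.

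The algebraic heart of the argument is the pointwise identity
\[
\partial_{x_j}\bigl(a(y,|\nabla u|)\,\nabla u\bigr)=\mathcal{B}(y,\nabla u)\,\nabla u_{x_j}\qquad\text{for }j=1,\dots,n,
\]
which follows at once from the chain rule $\partial_{x_j}|\nabla u|=(\nabla u\cdot\nabla u_{x_j})/|\nabla u|$ and the very definition \eqref{DEF:B} of $\mathcal{B}$ (using \eqref{ELLIPTICITY:3} where $\nabla u=0$). Combining this with the commutation $(\psi_{x_j})_{x_k}=(\psi_{x_k})_{x_j}$, an integration by parts in $x_j$ yields
\[
-\int_\C a(y,|\nabla u|)\,\nabla u\cdot\nabla\psi_{x_j}=\int_\C\langle\mathcal{B}(y,\nabla u)\,\nabla u_{x_j},\nabla\psi\rangle-\int_{\partial_L\C}a(y,|\nabla u|)\,(\nabla u\cdot\nabla\psi)\,\nu_j,
\]
where only $\partial_L\C$ contributes, since $\nu_j=0$ on $\partial_B\C$ for $j\le n$ and since $\psi$ has bounded support in $y$. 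Analogously,
\[
-\int_\C g(y,u)\,\psi_{x_j}=\int_\C g_u(y,u)\,u_{x_j}\,\psi-\int_{\partial_L\C}g(y,u)\,\psi\,\nu_j,
\]
and, performing the same integration by parts on the $n$-dimensional face $\Omega\times\{0\}$,
\[
-\int_{\partial_B\C}f(u)\,\psi_{x_j}=\int_{\partial_B\C}f'(u)\,u_{x_j}\,\psi-\int_{\partial\Omega\times\{0\}}f(u)\,\psi\,\nu_j.
\]
Adding the first two displays and equating with the third (this equality being exactly \eqref{WEAK} with $\varphi=-\psi_{x_j}$) produces precisely the identity of the lemma.

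The only delicate point is to justify these integrations by parts under the regularity and integrability hypotheses in \eqref{hp su u-2}: the $L^2(\Omega)$ traces of $\nabla_x u$ and $D^2_x u$ give a meaning to the boundary integral on $\partial_B\C$; the weighted $L^1$ bounds on $a(y,|\nabla u|)(|D^2_x u|^2+|\nabla_x u_y|^2)$, together with $\psi,\psi_{x_j}\in\mathcal{A}$, yield absolute convergence of each bulk integral via Cauchy--Schwarz; and the $C^2$-regularity of $u$ on $\overline{\Omega}\times(0,+\infty)$ (together with a truncation of $\psi$ in $y$ if needed) handles the lateral boundary terms and ensures that no contribution arises at $y=+\infty$.
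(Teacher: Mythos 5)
Your proof is correct and essentially follows the paper's own argument: test the weak formulation \eqref{WEAK} against $\pm\psi_{x_j}$, exploit the key pointwise identity $\partial_{x_j}\bigl(a(y,|\nabla u|)\nabla u\bigr)=\mathcal{B}(y,\nabla u)\nabla u_{x_j}$, and integrate by parts in $x_j$ in each of the three resulting integrals via the Divergence Theorem, noting that $\nu_j=0$ on $\partial_B\C$ for $j\le n$. The only difference from the paper is the cosmetic sign flip (you use $-\psi_{x_j}$ while the paper uses $\psi_{x_j}$); your closing remarks on the justification of the integrations by parts under \eqref{hp su u-2} are a welcome supplement to what the paper leaves implicit.
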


\begin{proof} First, we observe that, for any~$j=1,\dots,n$,
\begin{equation}\label{ihknfghvbsrdtcv98765678}
\partial_{x_j} \Big( a\big(y,|\nabla u(x,y)|\big)\Big)
= \sum_{k=1}^{n+1} a_t \big(y,|\nabla u(x,y)|\big)
\frac{u_{X_k}(x,y)\,u_{X_k x_j}(x,y)}{|\nabla u(x,y)|}.\end{equation}
Here and in the sequel, we use the notation~$X=(X_1,\dots,X_{n+1}):=(x,y)\in\R^{n+1}$,
hence~$X_k=x_k$ if~$k\in\{1,\dots,n\}$ and~$X_{n+1}=y$.

As a consequence of~\eqref{ihknfghvbsrdtcv98765678}, we have that,
for any fixed~$j=1,\dots,n$ and~$m=1,\dots,n+1$,
\begin{eqnarray*}
&& \Big( a(y,|\nabla u|)\,\nabla u_{x_j}+
\partial_{x_j} \big( a(y,|\nabla u|)\big)\,\nabla u\Big)_m\\
&=& a(y,|\nabla u|)\,u_{X_m x_j}+
\partial_{x_j} \big( a(y,|\nabla u|)\big)\,u_{X_m} 
\\ &=& \sum_{k=1}^{n+1}
a(y,|\nabla u|)\,\delta_{km}\, u_{X_k x_j}
+
\sum_{k=1}^{n+1} 
a_t (y,|\nabla u|)
\frac{u_{X_k}\,\,u_{X_m}\,u_{X_k x_j} }{|\nabla u|}
\\&=& \sum_{k=1}^{n+1} {\mathcal{B}}_{km}(y,\nabla u)\,u_{X_k x_j},
\end{eqnarray*}
where we have used~\eqref{DEF:B}.
Therefore, for any~$j=1,\dots,n$,
\begin{equation}\label{k:po}
\partial_{x_j}\big( a(y,|\nabla u|)\,\nabla u\big)  ={\mathcal{B}}(y,\nabla u)\,\nabla u_{x_j}.\end{equation}
Using \eqref{k:po}, we have the equality (in $L^1$ sense)
\begin{eqnarray*}
&& a(y,|\nabla u|)\,\nabla u\cdot \nabla\psi_{x_j} = \partial_{x_j}\Big( a(y,|\nabla u|)\,\nabla u 
\cdot \nabla\psi\Big) - \langle {\mathcal{B}}(y,\nabla u)\,\nabla u_{x_j}, \nabla\psi\rangle.
\end{eqnarray*}
We wish now to use \eqref{WEAK} with $\varphi= \psi_{x_j}$ (notice that
this is possible since we are supposing that~$\psi_{x_j} \in \mathcal{A}$). 
To this aim, we use the Divergence Theorem to obtain
\begin{equation}\label{Weak1}\begin{split} &\int_\C a(y,|\nabla u|)\,\nabla u\cdot \nabla\psi_{x_j} 
\\& \qquad= \int_{\partial_L\C}a(y,|\nabla u|)\,(\nabla u \cdot \nabla\psi)\,\nu_j
-\int_\C
\langle {\mathcal{B}}(y,\nabla u)\,\nabla u_{x_j}, \nabla\psi\rangle,
\end{split}
\end{equation}
for any~$j=1,\dots,n$. In a similar way
\begin{equation}\label{PoXXX-1}
\int_\C g(y,u)\,\psi_{x_j}= 
\int_{\partial_L \C} g(y,u)\,\psi \,\nu_j- \int_\C g_u(y,u)\,u_{x_j}\,\psi.
\end{equation}
Finally, using again the Divergence Theorem,
\begin{equation}\label{PoXXX-2}
\int_{\partial_B \C} f(u)\,\psi_{x_j} = 
\int_{\partial \Omega\times\{0\}} f(u)\,\psi \,\nu_j- \int_{\partial_B\C}
f'(u)\,u_{x_j}\,\psi.
\end{equation}
Notice that~$\nu_j$ denotes the $j$ component of $\nu=(\tilde \nu,0)$
and $\tilde \nu$ is the outer unit normal vector of $\pa \Omega$ in $\R^n$.

We can now insert \eqref{Weak1}, \eqref{PoXXX-1}
and~\eqref{PoXXX-2} into~\eqref{WEAK} and obtain the desired conclusion.
\end{proof}

As a consequence of Lemma~\ref{CL}, we can test
the linearized equation against~$\psi_j:= u_{x_j} \varphi$,
where~$\varphi$ is a ``nice'' function with bounded support in~$y$:
in this case, the particular choice of the test function
and the Neumann condition provide some additional
simplifications, as stated in the following result:

\begin{corollary}\label{HJ:cor:pho}
Assume that~$u$ is a solution of~\eqref{general problem}. Then
\begin{eqnarray}
&& \sum_{j=1}^{n}
\int_\C
\Big[
\langle {\mathcal{B}}(y,\nabla u)\,\nabla u_{x_j}, 
\nabla u_{x_j}\rangle\,\varphi+
\langle {\mathcal{B}}(y,\nabla u)\,
\nabla u_{x_j}, \nabla\varphi\rangle\,u_{x_j}\Big]  \notag\\
&&\qquad
+\int_\C g_u(y,u)\,|\nabla_x u|^2\,\varphi  \label{th corol 2.8}
\\ &&\qquad\qquad  =
\int_{\partial_L\C}
a(y,|\nabla u|)\,\big(\nabla u \cdot \partial_\nu( \nabla u)\big)\,\varphi
+ \int_{\partial_B\C} f'(u)\,|\nabla_x u|^2\varphi  \notag
\end{eqnarray}
for any $\varphi \in C^1(\overline{\C})$
with bounded support in~$y$ and such that~$\varphi_{x_j}\in {\mathcal{A}}$,
for any~$j=1,\dots,n$.\end{corollary}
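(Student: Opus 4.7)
The plan is a direct substitution argument: apply Lemma~\ref{CL} with the test function $\psi_j := u_{x_j}\,\varphi$ for each $j=1,\dots,n$ and sum the resulting identities over $j$. The Neumann condition $\partial_\nu u=0$ on $\partial\Omega\times[0,+\infty)$, i.e.\ $\nabla_x u\cdot\tilde\nu=0$ on $\partial_L\C\cup(\partial\Omega\times\{0\})$, will be responsible for making the unwanted boundary contributions collapse.

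The preliminary step is to check that $\psi_j$ lies in the admissible class required by Lemma~\ref{CL}, namely $\psi_j\in\mathcal{A}$ and $(\psi_j)_{x_k}\in\mathcal{A}$ for every $k=1,\dots,n$. Since $\varphi\in C^1(\overline{\mathcal C})$ has bounded support in $y$, the support conditions and the trace bounds follow immediately from \eqref{hp su u-2}. The weighted $L^2$ bounds $a(y,|\nabla u|)\,|\nabla\psi_j|^2,\ a(y,|\nabla u|)\,|\nabla(\psi_j)_{x_k}|^2\in L^1(\C)$ follow by expanding derivatives and invoking, respectively, the integrability of $a(y,|\nabla u|)\,(|\nabla u|^2+|D^2_xu|^2+|\nabla_xu_y|^2+|D^3_xu|^2+|D^2_xu_y|^2)$ on $\Omega\times(0,R)$ provided by \eqref{hp su u-2}. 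This is routine but is the only place where the full strength of the regularity hypotheses is used.

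Now substitute $\psi=\psi_j$ in Lemma~\ref{CL}. The chain rule gives
\[
\nabla\psi_j=u_{x_j}\,\nabla\varphi+\varphi\,\nabla u_{x_j},
\]
so that
\[
\langle\mathcal{B}(y,\nabla u)\nabla u_{x_j},\nabla\psi_j\rangle
=\varphi\,\langle\mathcal{B}(y,\nabla u)\nabla u_{x_j},\nabla u_{x_j}\rangle
+u_{x_j}\,\langle\mathcal{B}(y,\nabla u)\nabla u_{x_j},\nabla\varphi\rangle,
\]
and $g_u(y,u)\,u_{x_j}\,\psi_j=g_u(y,u)\,u_{x_j}^{\,2}\,\varphi$, and $f'(u)\,u_{x_j}\,\psi_j=f'(u)\,u_{x_j}^{\,2}\,\varphi$. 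Summing these over $j=1,\dots,n$ reproduces exactly the interior integrand of \eqref{th corol 2.8} and the $\partial_B\C$ term $\int_{\partial_B\C}f'(u)\,|\nabla_xu|^2\varphi$.

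It remains to simplify the two boundary contributions appearing in Lemma~\ref{CL}. Since $\nu=(\tilde\nu,0)$, one has $\sum_{j=1}^n u_{x_j}\nu_j=\nabla_x u\cdot\tilde\nu=\partial_\nu u$, which vanishes on $\partial\Omega\times[0,+\infty)$ by \eqref{hp su u-2}. Hence on $\partial_L\C$
\[
\sum_{j=1}^{n}a(y,|\nabla u|)\,(\nabla u\cdot\nabla\psi_j)\,\nu_j
=a(y,|\nabla u|)(\nabla u\cdot\nabla\varphi)\,\underbrace{(\nabla_xu\cdot\tilde\nu)}_{=0}
+\varphi\,a(y,|\nabla u|)\sum_{j=1}^{n}\nu_j(\nabla u\cdot\nabla u_{x_j}),
\]
and the last sum is precisely $\nabla u\cdot\partial_\nu(\nabla u)$, yielding the desired right-hand side $\int_{\partial_L\C}a(y,|\nabla u|)\,(\nabla u\cdot\partial_\nu(\nabla u))\,\varphi$. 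The term $\sum_j g(y,u)\,\psi_j\,\nu_j=g(y,u)\,\varphi\,(\nabla_xu\cdot\tilde\nu)$ vanishes on $\partial_L\C$ for the same reason, and likewise $\sum_j f(u)\,\psi_j\,\nu_j=f(u)\,\varphi\,(\nabla_xu\cdot\tilde\nu)=0$ on $\partial\Omega\times\{0\}$. Collecting everything gives \eqref{th corol 2.8}. The only real subtlety is the admissibility check for $\psi_j$; once that is in place, the rest is algebraic manipulation powered by the Neumann condition.
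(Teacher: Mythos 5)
Your proposal is correct and follows essentially the same route as the paper: take $\psi_j := u_{x_j}\varphi$ in Lemma~\ref{CL}, sum over $j$, expand $\nabla\psi_j = u_{x_j}\nabla\varphi + \varphi\,\nabla u_{x_j}$, and use $\sum_j u_{x_j}\nu_j = \nabla_x u\cdot\tilde\nu = \partial_\nu u = 0$ on $\partial_L\C$ and $\partial\Omega\times\{0\}$ to kill the extraneous boundary terms and to reduce the lateral-boundary contribution to $\nabla u\cdot\partial_\nu(\nabla u)$. You are somewhat more explicit than the paper about verifying that $\psi_j$ and $(\psi_j)_{x_k}$ lie in $\mathcal{A}$ (the paper simply appeals to~\eqref{hp su u-2} and the assumptions on $\varphi$), but the argument is the same.
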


\begin{proof}
We use $\psi_j:=u_{x_j} \varphi$ as test function in Lemma \ref{CL}, observing that this is possible by \eqref{hp su u-2} and the assumptions on $\varphi$, and then we sum over $j=1,\dots,n$.
First of all, for any~$x\in\partial\Omega$
and~$y>0$, if~$\psi_j:= u_{x_j} \varphi$ then on $\pa_L \C$
\begin{equation*}
\sum_{j=1}^n \psi_j \,\nu_j = \nabla_x u \cdot \nu \, \varphi=  \nabla u\cdot \nu \,\varphi=0,
\end{equation*}
where we used the fact that the last component of $\nu$ is $0$ on $\pa_L \mathcal{C}$. Since the normal along~$\partial_L\C$
coincides with the one along~$\partial\Omega$ by projection,
we deduce from~\eqref{hp su u-2} that 
$$\sum_{j=1}^n \psi_j \nu_j=0$$
also on~$\partial\Omega\times\{0\}$. These considerations imply that
\begin{equation}\label{P-eRF-1}
\begin{split}
\sum_{j=1}^n\int_{\partial_L\C} g(y,u)\,\psi_j \,\nu_j=0
\quad {\mbox{and }}\quad \sum_{j=1}^n
\int_{\partial \Omega\times\{0\}} f(u)\,\psi_j \,\nu_j=0.
\end{split}
\end{equation}
We also observe that
\begin{align*}
a(y,|\nabla u|)\, &(\nabla u \cdot \nabla\psi_j)\, \nu_j
\\
& = a(y,|\nabla u|)\,(\nabla u \cdot \nabla u_{x_j})\,\varphi\,\nu_j
+ a(y,|\nabla u|)\,(\nabla u \cdot \nabla\varphi)\, u_{x_j}\, \nu_j
\end{align*}
on~$\partial_L\C$, so that, using again the homogeneous
Neumann condition,
\begin{equation}\label{P-eRF-2}
\sum_{j=1}^n\int_{\partial_L\C}
a(y,|\nabla u|)\,(\nabla u \cdot \nabla\psi_j)\,\nu_j =
\sum_{j=1}^n\int_{\partial_L\C}
a(y,|\nabla u|)\,(\nabla u \cdot \nabla u_{x_j})\,\varphi\,\nu_j.
\end{equation}
Plugging~\eqref{P-eRF-1} and \eqref{P-eRF-2}
into the formula in Lemma~\ref{CL}, the thesis follows.
\end{proof}

A refinement of Corollary~\ref{HJ:cor:pho}, under additional integrability assumptions,
goes as follows:

\begin{corollary}\label{HJ:cor:pho-2}
Let $\Omega$ be a convex domain, and let~$u$ be a solution of~\eqref{general problem}, satisfying the integrability assumption \eqref{integrabilty assumption}. Then
\begin{eqnarray*}
&& \sum_{j=1}^{n}
\int_\C
\langle {\mathcal{B}}(y,\nabla u)\,\nabla u_{x_j},
\nabla u_{x_j}\rangle +\int_\C g_u(y,u)\,|\nabla_x u|^2\,
\\ &&\qquad\qquad  =
\int_{\partial_L\C}
a(y,|\nabla u|)\,\big(\nabla u \cdot \partial_\nu( \nabla u)\big)
+ \int_{\partial_B\C} f'(u)\,|\nabla_x u|^2.
\end{eqnarray*}
\end{corollary}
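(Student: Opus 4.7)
The plan is to apply Corollary~\ref{HJ:cor:pho} with a cutoff function $\varphi = \varphi_R(y)$ depending only on $y$, and then pass to the limit $R \to +\infty$, using the integrability hypothesis~\eqref{integrabilty assumption} to dominate each bulk term and the convexity of $\Omega$ (through Lemma~\ref{abbiamo}) to control the lateral boundary contribution.

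Concretely, I will choose $\varphi_R \in C^1([0,+\infty))$ with $\varphi_R \equiv 1$ on $[0,R]$, $\varphi_R \equiv 0$ on $[2R,+\infty)$, and $|\varphi_R'| \le C/R$. Since $\varphi_R$ is independent of $x$, the admissibility condition $(\varphi_R)_{x_j} \equiv 0 \in \mathcal{A}$ is trivially satisfied. Three of the four resulting terms then pass to the limit by standard arguments. The bulk integrals $\sum_j \int_\C \langle \mathcal{B}(y,\nabla u)\nabla u_{x_j},\nabla u_{x_j}\rangle\,\varphi_R$ and $\int_\C g_u(y,u)|\nabla_x u|^2\,\varphi_R$ converge by dominated convergence: Lemma~\ref{B-POS} together with~\eqref{ELLIPTICITY:2} bounds the eigenvalues of $\mathcal{B}$ by a constant multiple of $a(y,|\nabla u|)$, so that~\eqref{integrabilty assumption} delivers an $L^1$ majorant (using $\sum_j |\nabla u_{x_j}|^2 = |D^2_x u|^2 + |\nabla_x u_y|^2$). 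The bottom boundary integral $\int_{\pa_B\C} f'(u)|\nabla_x u|^2\,\varphi_R$ is in fact independent of $R$ since $\varphi_R(0)=1$, and it is finite thanks to the $L^2$-trace hypothesis in~\eqref{hp su u-2} together with the continuity of $u$ and $f'$.

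The main obstacle is the ``gradient-of-cutoff'' term
\begin{equation*}
T_R \,:=\, \sum_{j=1}^n \int_\C \langle \mathcal{B}(y,\nabla u)\nabla u_{x_j},\nabla \varphi_R\rangle\, u_{x_j},
\end{equation*}
which has no a priori sign. I plan to control it as follows: since $\nabla\varphi_R$ points only in the $y$-direction and has magnitude at most $C/R$ on $\Omega\times(R,2R)$, combining Lemma~\ref{B-POS} with Cauchy--Schwarz (first in the index $j$, then in the integral) yields
\begin{equation*}
|T_R| \,\le\, \frac{C}{R}\Bigl(\int_{\Omega\times(R,2R)} a(y,|\nabla u|)\bigl(|D^2_x u|^2+|\nabla_x u_y|^2\bigr)\Bigr)^{1/2}\Bigl(\int_{\Omega\times(R,2R)} a(y,|\nabla u|)|\nabla_x u|^2\Bigr)^{1/2}.
\end{equation*}
Both factors are tails of $L^1(\C)$-integrals by~\eqref{integrabilty assumption}, so $T_R \to 0$ as $R\to+\infty$.

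Finally, the lateral boundary term $\int_{\pa_L\C} a(y,|\nabla u|)(\nabla u\cdot\pa_\nu\nabla u)\,\varphi_R$ has pointwise nonpositive integrand by~\eqref{key-eq-EUCLIDEAN-2} and the convexity of $\Omega$; monotone convergence (from above, with $\varphi_R \uparrow 1$) therefore yields a limit in $[-\infty,0]$, which must actually be finite because the identity in Corollary~\ref{HJ:cor:pho} forces it to equal the sum of the three other finite limits. Passing to the limit in the identity of Corollary~\ref{HJ:cor:pho} thus gives the claimed formula.
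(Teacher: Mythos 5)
Your proof is correct and follows essentially the same route as the paper's (cutoff $\varphi_R(y)$, dominated convergence on the bulk terms, monotone convergence with the convexity sign from Lemma~\ref{abbiamo} on the lateral boundary). The one place where you add genuine value is the explicit Cauchy--Schwarz estimate of the cross term $T_R$: the paper's text lumps it into the statement that the $L^1$ bound ``allows us to pass to the limit in the first term on the left hand side,'' which glosses over the fact that $|\nabla\varphi_R|$ is not uniformly dominated, so plain dominated convergence does not directly apply there. Your two-step Cauchy--Schwarz (in $j$, then in the integral), giving
\[
|T_R| \le \frac{C}{R}\Bigl(\int_{\Omega\times(R,2R)} a(y,|\nabla u|)\bigl(|D^2_x u|^2+|\nabla_x u_y|^2\bigr)\Bigr)^{1/2}\Bigl(\int_{\Omega\times(R,2R)} a(y,|\nabla u|)|\nabla_x u|^2\Bigr)^{1/2}\,\longrightarrow\,0,
\]
is exactly the right way to make that step rigorous, since both factors are tails of integrals that are finite by~\eqref{integrabilty assumption}. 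Your observations on the bottom boundary term (it is $R$-independent since $\varphi_R(0)=1$, and finite by the $L^2$-trace hypothesis in~\eqref{hp su u-2} together with $u\in C(\overline{\C})$ and compactness of $\overline\Omega$) and on the a posteriori finiteness of the lateral integral are also correct. In short: same approach as the paper, written with more care on the cross term.
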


\begin{proof} We take~$\phi:=\phi_R(y)$ to be a smooth nonnegative
function such that~$\phi_R(y)=1$
if~$y\in[0,R]$, $\phi_R(y)=0$ if~$y\in[2R,+\infty)$
and~$|\phi'_R|\le 10/R$. We apply Corollary~\ref{HJ:cor:pho}
and we send~$R\to+\infty$, using assumption~\eqref{integrabilty assumption}. More precisely, by~\eqref{DEF:B} and~\eqref{ELLIPTICITY:2}, we can bound~$
|{\mathcal{B}}(y,\nabla u)|$
by~$a(y,|\nabla u|)$, up to multiplicative constants;
therefore
$$
|{\mathcal{B}}(y,\nabla u)|\, \big( |\nabla_x u|^2 + |D^2_x u|^2 +|\nabla_x u_y|^2\big)
\in L^1(\C)$$
thanks to~\eqref{integrabilty assumption}, and this allows
us to pass to the limit as~$R\to+\infty$ in the first term on the left hand side in \eqref{th corol 2.8}. As far as the second term, we use the fact that $g_u(y,u) |\nabla_xu|^2 \in L^1(\C)$ and argue in a similar way. Finally, for the first term on the right hand side of \eqref{th corol 2.8} we apply the Monotone Convergence Theorem, observing that, thanks to the convexity of $\Omega$, Lemma \ref{abbiamo} implies that
\[
a(y,|\nabla u|) \left(\nabla u \cdot \pa_\nu(\nabla u) \right) \le 0 \quad \text{on $\pa_L \C$}. \qedhere
\]
\end{proof}

\section{A Poincar\'e-type geometric inequality: proof
of Theorem~\ref{TH:POI}}\label{SEC:POINC-2}

In this section we prove the 
geometric inequality of Poincar\'e type stated in Theorem~\ref{TH:POI}.

\begin{proof}[Completion of the proof of Theorem~\ref{TH:POI}]
We use Corollary~\ref{HJ:cor:pho} with~$\varphi:=\psi^2$
and $\psi \in C^1(\overline{\C})$ with bounded support in $y$
and such that~$\psi_{x_j}\in{\mathcal{A}}$ for any~$j=1,\dots,n$.
In this way, we have that
\begin{equation}\label{result from equation}
\begin{split}
& \sum_{j=1}^{n}
\int_\C
\Big[
\langle {\mathcal{B}}(y,\nabla u)\,\nabla u_{x_j},
\nabla u_{x_j}\rangle\,\psi^2+2
\langle {\mathcal{B}}(y,\nabla u)\,
\nabla u_{x_j}, \nabla\psi\rangle\,u_{x_j}\,\psi\Big]\\
&\qquad
+\int_\C g_u(y,u)\,|\nabla_x u|^2\,\psi^2
\\ &\qquad\qquad  =
\int_{\partial_L\C}
a(y,|\nabla u|)\,\big(\nabla u \cdot \partial_\nu( \nabla u)\big)\,\psi^2
+ \int_{\partial_B\C} f'(u)\,|\nabla_x u|^2\psi^2.
\end{split}
\end{equation}
Now we use the fact that $u$ is stable, and we choose~$|\nabla_x u| \psi$
as 
test function in the definition of stability~\eqref{stability}
(we observe that this choice is admissible, thanks to~\eqref{hp su u-2}):
in this way, we conclude that
\begin{equation}\label{jh:0kl}
\begin{split}
& \int_{\C} \langle \mathcal{B}(y,\nabla u)
\nabla |\nabla_x u|,\, \nabla |\nabla_x u|\rangle \,\psi^2
+\int_{\C}
\langle \mathcal{B}(y,\nabla u)
\nabla \psi,\,\nabla \psi\rangle\,|\nabla_x u|^2 \\&\qquad+
\int_{\C}
2\langle \mathcal{B}(y,\nabla u)
\nabla |\nabla_x u|,\,\nabla \psi\rangle\,|\nabla_x u|\,\psi
\\&\qquad+ \int_{\C}g_u(y,u)\,|\nabla_x u|^2\,\psi^2 -
\int_{\pa_B \C} f'(u) \,|\nabla_x u|^2\,\psi^2\ge0.
\end{split}\end{equation}
It is convenient to observe that
$$ |\nabla_x u|\,\nabla|\nabla_x u|
=\frac12 \nabla |\nabla_x u|^2 
=\frac12 \sum_{j=1}^n \nabla u_{x_j}^2
=\sum_{j=1}^n u_{x_j} \nabla u_{x_j},$$
and so we can rewrite~\eqref{jh:0kl} as
\begin{equation*}
\begin{split}
& \int_{\C} \langle \mathcal{B}(y,\nabla u)
\nabla |\nabla_x u|,\, \nabla |\nabla_x u|\rangle \,\psi^2
+\int_{\C}
\langle \mathcal{B}(y,\nabla u)
\nabla \psi,\,\nabla \psi\rangle\,|\nabla_x u|^2 \\&\qquad+
\sum_{j=1}^n\int_{\C} 2
\langle \mathcal{B}(y,\nabla u)
\nabla u_{x_j},\,\nabla \psi\rangle\,u_{x_j}\,\psi
\\&\qquad+ \int_{\C}g_u(y,u)\,|\nabla_x u|^2\,\psi^2 -
\int_{\pa_B \C} f'(u) \,|\nabla_x u|^2\,\psi^2\ge0.
\end{split}\end{equation*}
This expression and \eqref{result from equation}
have three terms in common, which can be simplified
appropriately,
thus establishing~\eqref{POI:FO}.
\end{proof}

\section{Classification in convex domains I:
proof of Theorem~\ref{thm: symmetry convex 1}}\label{KJ:8a9}

We claim that
\begin{equation}\label{ihkAFYBJOIHKGBJ}
\begin{split}&
{\mbox{if~$\nabla_xu(x_o,y_o)\ne0$, then
each connected component of the
level set}} \\&{\mbox{of the function~$\Omega\ni x\mapsto u(x,y_o)$
must be}} \\&{\mbox{an $(n-2)$-dimensional hyperplane intersected~$\Omega$
}}\end{split}
\end{equation}
and that
\begin{equation}\label{PO:PO2:VX56}
\nabla u \cdot \partial_\nu( \nabla u)=0
{\mbox{ on }}\partial_L\C.
\end{equation}
Let~$R>10$ (to be taken arbitrarily large in the sequel).
We consider a smooth function~$\tau_R:[0,+\infty)\to [0,1]$ supported in~$[\sqrt{R},\,R]$, 
such that~$\tau_R=1$ in~$[\sqrt{R}+1,\,R-1]$
and~$|\nabla\tau_R|\le10$. For any~$y\ge0$, we define
$$ \psi_R(y):= \int_y^R \frac{\tau_R(\zeta)}{\zeta}\,d\zeta.$$
Since~$\tau_R$ vanishes in~$[0,\sqrt{R}]$ we have that~$\psi_R$
is smooth, continuous in~$[0,+\infty)$ and
\begin{equation}\label{HHJ}
\begin{split}
\psi_R(y) &= \int_{\sqrt{R}}^R \frac{\tau_R(\zeta)}{\zeta}\,d\zeta
\ge \int_{\sqrt{R}+1}^{R-1} \frac{\tau_R(\zeta)}{\zeta}\,d\zeta
\\&=
\int_{\sqrt{R}+1}^{R-1} \frac{d\zeta}{\zeta}
=\log \frac{R-1}{\sqrt{R}+1}\ge 
\log\frac{\sqrt{R}}{2},
\end{split}\end{equation}
for any~$y\in[0,\sqrt{R}]$, as long as~$R$ is sufficiently large. In addition, since~$\tau_R$ 
vanishes also in~$[R,+\infty)$, we have that~$\psi_R(y)=0$
for any~$y\ge R$, hence~$\psi_R$ is compactly supported in~$[0,+\infty)$. Finally, $\psi_R \in C^2(\overline{\C})$ by the choice of $\tau_R$. As a consequence, we can use~$\psi_R$ as a test function in~\eqref{POI:FO}: this yields
\begin{equation}\label{POI:FO:BIS}
\begin{split}
& \int_\C\left[ \sum_{j=1}^{n}
\langle {\mathcal{B}}(y,\nabla u)\,\nabla u_{x_j},
\nabla u_{x_j}\rangle
-\langle \mathcal{B}(y,\nabla u)
\nabla |\nabla_x u|,\, \nabla |\nabla_x u|\rangle\right] \,\psi_R^2
\\ &\qquad-\int_{\partial_L\C}
a(y,|\nabla u|)\,\big(\nabla u \cdot \partial_\nu( \nabla u)\big)\,\psi_R^2
\le\int_{\C}\langle \mathcal{B}(y,\nabla u)
\nabla \psi_R,\,\nabla \psi_R\rangle\,|\nabla_x u|^2 .
\end{split}\end{equation}
Now we use~\eqref{key-eq-EUCLIDEAN-2}, \eqref{OIhh}, \eqref{HHJ}, and the fact that $\nabla_x u$ is constant almost everywhere on $\{\nabla_x u=0\}$, 
by Stampacchia's Theorem,
to see that
\begin{equation}\label{OhgFF67:001}
\begin{split}
&\int_\C\left[ \sum_{j=1}^{n}
\langle {\mathcal{B}}(y,\nabla u)\,\nabla u_{x_j},
\nabla u_{x_j}\rangle
-\langle \mathcal{B}(y,\nabla u)
\nabla |\nabla_x u|,\, \nabla |\nabla_x u|\rangle\right] \,\psi_R^2 \ge\\
& \left(\log\frac{\sqrt{R}}{2}\right)^2
\int_{\Omega\times(0,\sqrt{R})}\left[ \sum_{j=1}^{n}
\langle {\mathcal{B}}(y,\nabla u)\,\nabla u_{x_j},
\nabla u_{x_j}\rangle
-\langle \mathcal{B}(y,\nabla u)
\nabla |\nabla_x u|,\, \nabla |\nabla_x u|\rangle\right] 
\end{split}
\end{equation}
and
\begin{equation}\label{OhgFF67:002}
\begin{split}
&-\int_{\partial_L\C}
a(y,|\nabla u|)\,\big(\nabla u \cdot \partial_\nu( \nabla u)\big)\,\psi_R^2\\
\ge\,& -\left(\log\frac{\sqrt{R}}{2}\right)^2\,
\int_{\partial_L\C\cap\{ y\in(0,\sqrt{R})\}}
a(y,|\nabla u|)\,\big(\nabla u \cdot \partial_\nu( \nabla u)\big).\end{split}
\end{equation}
In this way we can estimate the left hand side in \eqref{POI:FO:BIS}. As far as the right hand side is concerned, by~\eqref{ELLIPTICITY:2} 
we obtain
\begin{multline*}
\int_{\C}\langle \mathcal{B}(y,\nabla u)
\nabla \psi_R,\,\nabla \psi_R\rangle\,|\nabla_x u|^2
\le C_1
\int_{\C}a(y,|\nabla u|)
\,|\nabla \psi_R|^2\,|\nabla_x u|^2 \\
\le C_2\,\int_{\Omega\times(\sqrt{R},R)} a(y,|\nabla u|)
\,\frac{|\tau_R(y)|^2\,|\nabla_x u|^2}{y^2} \le C_2\,\int_{\Omega\times(\sqrt{R},R)} a(y,|\nabla u|)
\,\frac{|\nabla_x u|^2}{y^2} ,
\end{multline*}
for some~$C_1$, $C_2>0$. So, by using Lemma~\ref{herein}
with~$h:=a(y,|\nabla u|)\,|\nabla_x u|^2$
and recalling~\eqref{energy bound}, we conclude that
\begin{equation}\label{OhgFF67:003}
\begin{split}
&\int_{\C}\langle \mathcal{B}(y,\nabla u)
\nabla \psi_R,\,\nabla \psi_R\rangle\,|\nabla_x u|^2\\
\le\,& C_3 \int_{\sqrt{R}}^R
\left[\int_{\Omega\times(0,t)}
a(y,|\nabla u|)\,|\nabla_x u|^2
\right]\,\frac{dt}{t^3}+
\frac{C_3}{R^2}
\int_{\Omega\times(0,R)}
a(y,|\nabla u|)\,|\nabla_x u|^2
\\ \le\,& C_4
\int_{\sqrt{R}}^R \frac{dt}{t}+
C_4 \le\, C_5 \log\sqrt R,
\end{split}\end{equation}
provided that $R$ is large
enough, for some constants~$C_3$, $C_4$, $C_5>0$.
Thus we insert~\eqref{OhgFF67:001}, \eqref{OhgFF67:002}
and~\eqref{OhgFF67:003} into~\eqref{POI:FO:BIS},
we divide by~$\left(\log\frac{\sqrt{R}}{2}\right)^2$ and we deduce
\begin{eqnarray*}
&& \int_{\Omega\times(0,\sqrt{R})}\left[ \sum_{j=1}^{n}
\langle {\mathcal{B}}(y,\nabla u)\,\nabla u_{x_j},
\nabla u_{x_j}\rangle
-\langle \mathcal{B}(y,\nabla u)
\nabla |\nabla_x u|,\, \nabla |\nabla_x u|\rangle\right]\\
&&\qquad-
\int_{\partial_L\C\cap\{ y\in(0,\sqrt{R})\}}
a(y,|\nabla u|)\,\big(\nabla u \cdot \partial_\nu( \nabla u)\big)
\,\le\, \frac{C_6 \log R}{\left(\log\frac{\sqrt{R}}{2}\right)^2}.
\end{eqnarray*}
Since the latter term is infinitesimal as~$R\to+\infty$, the previous estimate implies
\begin{eqnarray*}
&& \int_{\C}\left[ \sum_{j=1}^{n}
\langle {\mathcal{B}}(y,\nabla u)\,\nabla u_{x_j},
\nabla u_{x_j}\rangle
-\langle \mathcal{B}(y,\nabla u)
\nabla |\nabla_x u|,\, \nabla |\nabla_x u|\rangle\right]\\
&&\qquad-
\int_{\partial_L\C}
a(y,|\nabla u|)\,\big(\nabla u \cdot \partial_\nu( \nabla u)\big)
\,\le\, 0,
\end{eqnarray*}
and as a consequence
\begin{equation}
\label{PO:PO1} \left[ \sum_{j=1}^{n}
\langle {\mathcal{B}}(y,\nabla u)\,\nabla u_{x_j},
\nabla u_{x_j}\rangle
-\langle \mathcal{B}(y,\nabla u)
\nabla |\nabla_x u|,\, \nabla |\nabla_x u|\rangle\right]=0
{\mbox{ in }}\C
\end{equation}
and
\begin{equation*}
a(y,|\nabla u|)\,\big(\nabla u \cdot \partial_\nu( \nabla u)\big)=0
{\mbox{ on }}\partial_L\C,\end{equation*}
thanks to~\eqref{key-eq-EUCLIDEAN-2} and~\eqref{OIhh}.
This establishes~\eqref{PO:PO2:VX56} (recall also~\eqref{ELLIPTICITY}).

Also, by \eqref{PO:PO1}
and Corollary~\ref{Ut:OI},
we obtain that~\eqref{ihkAFYBJOIHKGBJ} holds.
\medskip

Now we use that~$\partial\Omega$ 
has positive principal curvatures.
For this, we claim that
\begin{equation}\label{JU:tp}
{\mbox{$u$ is constant along~$\partial\Omega\times\{\bar y\}$,}}\end{equation}
for any fixed~$\bar y>0$.
To prove it assume by contradiction
that~$u(p,\bar y)\ne u(q,\bar y)$ for some~$p$, $q\in \partial\Omega$.
By Lemma~\ref{con:PROI}, we know that we can connect $p$ to~$q$
with a continuous path~$\sigma:[0,1]\to\partial\Omega$.
Let~$\zeta(t):= u(\sigma(t),\,\bar y)$.
Then $\zeta(0) \neq \zeta(1)$, and
therefore there exists~$\bar t\in (0,1)$ such that~$\dot\zeta(\bar t)\ne0$.
That is
\begin{equation} \label{OUGFh} 0\ne
\dot\zeta(\bar t) = \nabla_x u(\sigma(\bar t)
,\,\bar y) \cdot \dot\sigma(\bar t).\end{equation}
We let~$\bar x:=\sigma(\bar t)$.
Up to a change of coordinates, we may suppose that the exterior
normal of~$\partial\Omega$ at~$\bar x$
coincides with~$-e_n$,
hence, near~$\bar x$ the domain~$\Omega$ can
be written in normal coordinates as the epigraph of a function~$\gamma\in
C^2(\R^{n-1})$. The fact that the principal curvatures of~$\partial\Omega$
are positive implies that 
\begin{equation}\label{PO:k67YY}
{\mbox{the Hessian of~$\gamma$
is positive definite}}.\end{equation}
On the other hand, by~\eqref{PO:PO2:VX56} and~\eqref{key-eq-EUCLIDEAN},
we have that
$$ 0=-\nabla u(\bar x,\bar y) \cdot \partial_\nu
\left(\nabla u(\bar x,\bar y)\right) =  \sum_{i,j=1}^{n-1}\gamma_{x_i x_j}
(\bar x')\,u_{x_i}(\bar x,\bar y)\,u_{x_j}(\bar x,\bar y).$$
This and~\eqref{PO:k67YY} give that~$u_{x_i}(\bar x,\bar y)=0$
for any~$i=1,\dots,n-1$. 
By the Neumann condition and the choice of the coordinate system,
we also know that~$u_{x_n}=-\partial_\nu
u=0$ in $(\bar x,\bar y)$. 
Hence~$\nabla_x u(\bar x,\bar y)=0$, in contradiction with~\eqref{OUGFh}; this proves~\eqref{JU:tp}.

Now we show that
\begin{equation}\label{JU:tp:2}
{\mbox{$u$ is constant in $\Omega\times\{\bar y\}$,}}\end{equation}
for any fixed~$\bar y>0$. For this, 
we argue by contradiction and we assume that this is not true:
as a consequence, we have that
\begin{equation}\label{JU:tp:2:CONR}
\{ x\in\Omega {\mbox{ s.t. }}\nabla_x u(x,\bar y)\ne0\}\ne\varnothing.
\end{equation}
We let~$c(\bar y)$ be the value
attained by~$u$ along~$\partial\Omega\times\{\bar y\}$,
as given by~\eqref{JU:tp}.

We also take an arbitrary point~$x_0\in\Omega$
for which~$\nabla_x u(x_0,\bar y)\ne0$ (here, we are using~\eqref{JU:tp:2:CONR}
to say that such point exists). We let~$L(x_0)$
be the 
connected component of the
level set of~$u(\cdot,\bar y)$ in~$\Omega$
which passes through~$x_0$.
In view of~\eqref{ihkAFYBJOIHKGBJ}, we know that
\begin{equation}\label{9ikHAKAKKAK67890987654}
L(x_0)=\{ \omega\cdot(x-x_0)=0\}\cap\Omega,\end{equation}
for a suitable~$\omega\in S^{n-1}$, possibly depending
on~$x_0$ and~$\bar y$.
We also consider a vector~$\varpi$ orthogonal to~$\omega$
(of course, $\varpi$ may also depend
on~$x_0$ and~$\bar y$).

Then,
we consider the straight line
$$ \{ x_0+ \varpi t, \; t\in\R\}.$$
Since the domain~$\Omega$ is bounded, such a line must
intersect somewhere the boundary of~$\Omega$, i.e.
there exists~$t_0$
such that~$x_0+\varpi t_0\in \partial\Omega$.
Therefore, by~\eqref{JU:tp},
\begin{equation}\label{Kj7855:A}
u\big(x_0+\varpi t_0,\,\bar y\big)=c(\bar y).\end{equation}
On the other hand, by~\eqref{9ikHAKAKKAK67890987654},
\begin{equation*}
u\big(x_0+\varpi t_0,\,\bar y\big)=u\big(x_0,\,\bar y\big).\end{equation*}
This and~\eqref{Kj7855:A} give that
$$ u(x_0,\bar y)=c(\bar y).$$
Since this holds for any point~$x_0\in\Omega$
for which~$\nabla_x u(x_0,\bar y)\ne0$,
we have established that
$$ {\mbox{$u(x,\bar y)=c(\bar y)$ for any $x\in\Omega\cap
\{ \nabla_x u(\cdot,\bar y)\ne0\}$.}}$$
Since the above identity also holds on~$\partial\Omega$
and since~$u$ is constant in each component of~$
\Omega\cap
\{ \nabla_x u(\cdot,\bar y)=0\}$, we obtain that~$u(x,\bar y)=c(\bar y)$ for any $x\in\Omega$,
hence~$\nabla_xu$ vanishes identically in~$\Omega$.

This is in contradiction with~\eqref{JU:tp:2:CONR}; hence,
we have established~\eqref{JU:tp:2},
and thus completed the proof of
Theorem~\ref{thm: symmetry convex 1}.

\section{Classification in convex domains II:
proof of Theorem~\ref{P-CA}}\label{HG:SEC}

Now we address the proof of Theorem~\ref{P-CA}. For this goal,
we need a detailed study of functions which attain
the minimum of the stability functional~$I$ 
introduced in~\eqref{stability}. After this,
we will use the geometric observations
exposed in Section~\ref{sec: preliminaries}
and suitable test functions to complete the proof
of Theorem~\ref{P-CA}.

\subsection{Rigidity of minimal solutions}

In this part, we study the rigidity properties
of the minimizers of the stability functional~$I$,
as defined in~\eqref{stability}. To this aim, we introduce 
\begin{equation}\label{JH:AHJK}
{\mathcal{A}}^*:= \left\{ \varphi\in W^{1,1}_{\rm loc}(\C) \left| \begin{array}{l}
a(y,|\nabla u|)\,\Big(\varphi^2+|\nabla\varphi|^2\Big)+ |g_u(y,u)| \varphi^2 \in L^1(\C) 
\\ 
 {\mbox{and }} \varphi|_{\Omega \times \{0\}} \in L^2(\Omega)
\end{array}\right.\right\}.
\end{equation}
With respect to the space~$\mathcal{A}$ defined in \eqref{DEF:A}, we replace the requirement that $\varphi$ has bounded support in $y$ with an integrability condition. {F}rom now on,
we assume that~$u$ is a stable solution of~\eqref{general problem},
according to~\eqref{stability}.

\begin{lemma}\label{lem: density stability}
Let $I$ be defined by \eqref{stability}. Then $I(\varphi) \ge 0$ for every $\varphi \in \mathcal{A}^*$.
\end{lemma}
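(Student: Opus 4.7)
The natural plan is approximation by a $y$-cutoff: pass from an arbitrary $\varphi \in \mathcal{A}^*$ to a sequence $\varphi_R \in \mathcal{A}$ by truncating in the axial variable, apply the stability inequality to $\varphi_R$, and then let $R \to +\infty$. The key point, and the very reason $\mathcal{A}^*$ is a larger class than $\mathcal{A}$, is that the $L^1$-integrability of $a(y,|\nabla u|)\varphi^2$ built into \eqref{JH:AHJK} gives precisely the control needed to discard the cutoff error.

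In detail, I would fix $\tau_R \in C^\infty([0,+\infty))$ with $0 \le \tau_R \le 1$, $\tau_R \equiv 1$ on $[0,R]$, $\tau_R \equiv 0$ on $[2R,+\infty)$ and $|\tau_R'| \le C/R$, and set $\varphi_R(x,y) := \tau_R(y)\varphi(x,y)$. First I would verify $\varphi_R \in \mathcal{A}$: it has support in $\overline\Omega \times [0,2R]$; since $|\nabla \varphi_R|^2 \le 2\tau_R^2|\nabla\varphi|^2 + 2\varphi^2|\tau_R'|^2$ and both $a(y,|\nabla u|)|\nabla\varphi|^2$ and $a(y,|\nabla u|)\varphi^2$ lie in $L^1(\C)$ by definition of $\mathcal{A}^*$, one has $a(y,|\nabla u|)|\nabla\varphi_R|^2 \in L^1(\C)$; finally $\varphi_R|_{\Omega\times\{0\}} = \varphi|_{\Omega\times\{0\}} \in L^2(\Omega)$ since $\tau_R(0)=1$. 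Thus \eqref{stability} applies and gives $I(\varphi_R) \ge 0$.

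Next I would pass to the limit. Expand
\[
\langle \mathcal{B}\nabla\varphi_R, \nabla\varphi_R\rangle = \tau_R^2 \langle \mathcal{B}\nabla\varphi,\nabla\varphi\rangle + 2\tau_R\varphi\,\langle \mathcal{B}\nabla\varphi,\nabla\tau_R\rangle + \varphi^2 \langle \mathcal{B}\nabla\tau_R,\nabla\tau_R\rangle .
\]
By Lemma~\ref{B-POS} together with \eqref{ELLIPTICITY:2}, the entries of $\mathcal{B}(y,\nabla u)$ are controlled by $Ca(y,|\nabla u|)$, so all three terms are dominated (in absolute value) by $L^1$ functions. The first term converges to $\langle \mathcal{B}\nabla\varphi,\nabla\varphi\rangle$ pointwise and $\tau_R^2 \le 1$, hence by dominated convergence it converges in $L^1$. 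The third term is bounded by
\[
\frac{C}{R^2}\int_{\Omega\times(R,2R)} a(y,|\nabla u|)\,\varphi^2 \longrightarrow 0 \quad \text{as } R\to+\infty,
\]
since $a(y,|\nabla u|)\varphi^2\in L^1(\C)$. The cross term is handled by Cauchy--Schwarz: its $L^1$ norm is at most $2\bigl(\int\tau_R^2\langle\mathcal{B}\nabla\varphi,\nabla\varphi\rangle\bigr)^{1/2}\bigl(\int\varphi^2\langle\mathcal{B}\nabla\tau_R,\nabla\tau_R\rangle\bigr)^{1/2}$, which tends to $0$ because the first factor is bounded and the second tends to $0$ by the previous estimate. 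For the $g_u$ integral, dominated convergence with the dominant $|g_u(y,u)|\varphi^2 \in L^1(\C)$ (again built into $\mathcal{A}^*$) gives $\int g_u \tau_R^2\varphi^2 \to \int g_u \varphi^2$. The boundary integral is independent of $R$ since $\tau_R(0)=1$.

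Combining these limits yields $I(\varphi_R) \to I(\varphi)$, so $I(\varphi) \ge 0$. There is no real obstacle here: the construction is routine, and the only nontrivial point is to notice that the new ingredient in $\mathcal{A}^*$ (namely $a(y,|\nabla u|)\varphi^2 \in L^1(\C)$, which is not present in $\mathcal{A}$) is exactly what kills the cutoff error $(C/R^2)\int_{\Omega\times(R,2R)}a(y,|\nabla u|)\varphi^2$.
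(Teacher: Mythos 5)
Your proposal is correct and follows essentially the same cutoff-and-pass-to-the-limit strategy as the paper, down to the same choice of $\tau_R$ and the same use of \eqref{DEF:B}--\eqref{ELLIPTICITY:2} to bound $|\mathcal{B}|$ by $Ca(y,|\nabla u|)$. The only (cosmetic) difference is in how the cross term is discarded: you apply Cauchy--Schwarz directly to show it tends to zero, whereas the paper absorbs it via a weighted Young inequality, picking up a harmless factor $(1+\epsilon)$ that is removed at the end by letting $\epsilon\to0$; both work equally well.
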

\begin{proof}
We proceed by approximation in the following way: let $\tau_R$ be a smooth function such that
\begin{eqnarray*}
\tau_R(t)=
\begin{cases}
1, {\mbox{ if }} 0<t\le R, \\
0,  {\mbox{ if }} t\ge 2R,
\end{cases}
\end{eqnarray*}
and $|\tau'_R|\le C/R$ for some $C>0$. 
Given $\varphi \in \mathcal{A}^*$, the function $\varphi_R(x,y):= 
\varphi(x,y)\,\tau_R(y)$ belongs to $\mathcal{A}$, and hence can be used as a
test function in the stability assumption \eqref{stability}. 
Therefore
\begin{equation}\label{PANaHBKKH}
I(\varphi_R)\ge0.
\end{equation}
In order to obtain the desired result, we aim at passing to the limit as $R \to +\infty$. The details go as follows.
First of all, we recall~\eqref{DEF:B} and \eqref{ELLIPTICITY:2}, to point out that
$$ \big|\mathcal{B}(y,\eta)\big| \le
C_1 \Big( a(y,|\eta|) + a_t(y,|\eta|)\,|\eta|
\Big) \le C_2 \,a(y,|\eta|),$$
for some~$C_1$, $C_2>0$.
As a consequence
\begin{equation}\label{ipotesi strana-2}
\lim_{R \to +\infty} \frac{1}{R^2}
\int_{\Omega \times (R,2R)} \big|{\mathcal{B}}(y,\nabla u)\big|
\,\varphi^2 \le 
\lim_{R \to +\infty} \frac{C_2}{R^2} \int_{\C} a(y,|\nabla u|)\,
\,\varphi^2 =0,
\end{equation}
where we used the integrability conditions in \eqref{JH:AHJK}.

Now, from \eqref{PANaHBKKH}
we infer that
\begin{equation}\label{P0hj}
\begin{split}
0 
&\le\int_{\C} \langle \mathcal{B}(y,\nabla u)  
\nabla \varphi,\nabla \varphi\rangle \,\tau_R^2
+ \int_{\C} \langle \mathcal{B}(y,\nabla u)  
\nabla \tau_R,\nabla \tau_R\rangle \,\varphi^2\\
&\quad + 2\int_{\C} \langle \mathcal{B}(y,\nabla u)  \nabla \varphi,
\nabla \tau_R\rangle \,\varphi\,\tau_R
+ g_u(y,u)\varphi^2\tau_R^2 -\int_{\pa_B \C} f'(u) \varphi^2.\end{split}\end{equation}
Let~$\epsilon>0$
(to be taken arbitrarily small in the sequel); we use a weighted H\"older inequality to observe that
\begin{multline*}
\int_{\C} \langle \mathcal{B}(y,\nabla u)  \nabla \varphi,
\nabla \tau_R\rangle \,\varphi\,\tau_R \\
\le \epsilon \int_{\C} \langle \mathcal{B}(y,\nabla u)  \nabla \varphi,
\nabla \varphi\rangle \,\tau_R^2
+C_\epsilon
\int_{\C} \langle \mathcal{B}(y,\nabla u)  \nabla \tau_R,
\nabla \tau_R\rangle \,\varphi^2
\end{multline*}
for some~$C_\epsilon>0$. Plugging this into~\eqref{P0hj},
we obtain 
\begin{eqnarray*}
0 &\le& (1+\epsilon)
\int_{\C} \langle \mathcal{B}(y,\nabla u)  
\nabla \varphi,\nabla \varphi\rangle \tau_R^2
+\frac{1+C_\epsilon}{R^2} \int_{\Omega \times (R,2R)} \big| \mathcal{B}(y,\nabla u)\big|\,\varphi^2\\
&&\quad + 
\int_{\C} g_u(y,u) \varphi^2\tau_R^2 -\int_{\pa_B \C} f'(u) \varphi^2
.\end{eqnarray*}
Recalling~\eqref{ipotesi strana-2} and the definition of $\mathcal{A}^*$, we can pass to the limit as $R \to +\infty$, concluding that
$$ 0 \le (1+\epsilon)
\int_{\C} \langle \mathcal{B}(y,\nabla u)
\nabla \varphi,\nabla \varphi\rangle +
\int_{\C} g_u(y,u) \varphi^2 -\int_{\pa_B \C} f'(u) \varphi^2
.$$
Since $\epsilon>0$ has been arbitrarily chosen, the thesis follows.
\end{proof}

In light of the previous result, we write that $\varphi \in \mathcal{A}^*$ is a \emph{minimizer} for $I$ if $I(\varphi) \le 0$
(equivalently, by Lemma~\ref{lem: density stability},
if $I(\varphi) =0$). First, we show that
minimizers satisfy a suitable reaction-diffusion equation,
both in the weak and in the strong sense:

\begin{lemma}\label{L1}
Assume
that~$I(\varphi)\le 0$, for some~$\varphi\in {\mathcal{A}^*}$. Then
$$ \int_{\C} \langle {\mathcal{B}}(y,\nabla u) \nabla\varphi,
\nabla\zeta\rangle
+\int_{\C} g_u(y,u)\varphi\zeta - \int_{\partial_B\C} f'(u)\varphi\zeta=0,$$
for any $\zeta
\in {\mathcal{A}^*}$.\end{lemma}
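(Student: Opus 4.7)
The plan is to recognize that, under the hypotheses, $\varphi$ is a minimizer of the quadratic form $I$ on the linear space $\mathcal{A}^*$, and the claimed identity is precisely the vanishing of the first variation. Concretely, by Lemma~\ref{lem: density stability} we have $I(\eta)\ge 0$ for every $\eta\in\mathcal{A}^*$, and combined with the assumption $I(\varphi)\le 0$ this forces $I(\varphi)=0$.

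Next, I would verify that $\mathcal{A}^*$ is a vector space: the conditions defining $\mathcal{A}^*$ in~\eqref{JH:AHJK} are preserved under linear combinations (the integrability conditions only involve $\varphi^2$, $|\nabla\varphi|^2$ and the trace in $L^2(\Omega)$, all of which are compatible with sums after the elementary inequality $(\varphi_1+\varphi_2)^2\le 2\varphi_1^2+2\varphi_2^2$, and similarly for gradients and traces). Consequently, for any $\zeta\in\mathcal{A}^*$ and any $t\in\R$, the function $\varphi+t\zeta$ lies in $\mathcal{A}^*$, so that $I(\varphi+t\zeta)$ is well-defined and finite.

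The core computation is to expand
\[
I(\varphi+t\zeta)=I(\varphi)+2t\,Q(\varphi,\zeta)+t^2\,I(\zeta),
\]
where
\[
Q(\varphi,\zeta):=\int_{\C} \langle \mathcal{B}(y,\nabla u)\,\nabla\varphi,\nabla\zeta\rangle
+\int_{\C} g_u(y,u)\,\varphi\,\zeta -\int_{\partial_B\C} f'(u)\,\varphi\,\zeta
\]
is the symmetric bilinear form associated with $I$. All three terms in $Q(\varphi,\zeta)$ are absolutely integrable by Cauchy--Schwarz applied in each of the three summands, using the integrability conditions in~\eqref{JH:AHJK} for both $\varphi$ and $\zeta$ (the positive semi-definiteness of $\mathcal{B}$ from Lemma~\ref{B-POS} allows a genuine Cauchy--Schwarz estimate for the first summand, while the boundary term is controlled since $\varphi|_{\Omega\times\{0\}},\zeta|_{\Omega\times\{0\}}\in L^2(\Omega)$ and $f'(u)$ is bounded on bounded sets because $u\in C(\overline{\C})$).

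Finally, since $I(\varphi)=0$ and, by Lemma~\ref{lem: density stability}, $I(\varphi+t\zeta)\ge 0$ for every $t\in\R$, the real-valued function $t\mapsto I(\varphi+t\zeta)=2t\,Q(\varphi,\zeta)+t^2\,I(\zeta)$ attains its minimum at $t=0$. Differentiating at $t=0$ (or simply observing that a quadratic with a zero at a minimum must have vanishing linear coefficient) gives $Q(\varphi,\zeta)=0$, which is exactly the claimed identity. The only mildly delicate point is the integrability check for the cross term in $Q(\varphi,\zeta)$, but this is immediate from Cauchy--Schwarz and the hypotheses built into $\mathcal{A}^*$; no other real obstacle arises.
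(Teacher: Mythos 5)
Your proposal is correct and follows essentially the same route as the paper: expand $I(\varphi+t\zeta)$, use $I(\varphi+t\zeta)\ge 0$ together with $I(\varphi)\le 0$ to locate a minimum at $t=0$, and deduce that the first variation $Q(\varphi,\zeta)$ vanishes. The paper states this in one line; you have merely filled in the supporting details (that $\mathcal{A}^*$ is a vector space and that the cross-term $Q(\varphi,\zeta)$ is absolutely integrable), which the authors left implicit.
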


\begin{proof}
For any~$\epsilon\in\R$
and any test function~$\zeta$, we have that~$I(\varphi+\epsilon\zeta)
\ge0\ge I(\varphi)$, and therefore, dividing by $\epsilon$
and sending $\epsilon\to0$, we obtain the desired result.
\end{proof}

\begin{lemma}
Assume also
that~$I(\varphi)\le 0$, for some~$\varphi\in \mathcal{A}^* \cap C^2(\C)\cap C^1(
\overline\Omega\times[\alpha,\beta])$, for any~$\beta>\alpha>0$
Then, $\varphi$ is a solution of
\begin{eqnarray}
&&\label{bou-EQ}
\sum_{i,j=1}^{n+1} 
{\mathcal{B}}_{ij}(y,\nabla u) 
\partial^2_{X_i X_j}\varphi+
\partial_{X_i}
{\mathcal{B}}_{ij}(y,\nabla u)
\partial_{X_j}\varphi - g_u(y,u)\varphi=0
{\mbox{ in }}\C,
\\
&&\label{bou-00}  {\mbox{with
$\partial_\nu \varphi=0$
on $\partial_L\C$,}}
\end{eqnarray}
where we recall that $X=(x,y) \in \R^{n+1}$.
\end{lemma}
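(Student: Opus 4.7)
The plan is to upgrade the weak identity in Lemma~\ref{L1} to the pointwise equation \eqref{bou-EQ} and the Neumann condition \eqref{bou-00} by a two-stage choice of test function, exploiting the extra regularity of $\varphi$ provided by the hypothesis.

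First I would specialise Lemma~\ref{L1} to test functions $\zeta\in C_c^\infty(\C)$ supported compactly inside the open cylinder. Any such $\zeta$ lies in $\mathcal{A}^*$, because on its compact support $a(y,|\nabla u|)$ and $|g_u(y,u)|$ are bounded thanks to the continuity of $a$ and $g$ and the smoothness of $u$, so the integrability conditions in \eqref{JH:AHJK} are trivial. Since $\varphi\in C^2(\C)$, the vector field $\mathcal{B}(y,\nabla u)\nabla\varphi$ is of class $C^1$ in $\C$, and the divergence theorem produces no boundary contribution:
\[
\int_\C \langle \mathcal{B}(y,\nabla u)\nabla\varphi,\nabla\zeta\rangle = -\int_\C \zeta\,\mathrm{div}\big(\mathcal{B}(y,\nabla u)\nabla\varphi\big).
\]
Expanding the divergence and using the symmetry of $\mathcal{B}$ to relabel indices, the bracket under $\zeta$ takes exactly the form of the left hand side of \eqref{bou-EQ} minus $g_u(y,u)\varphi$. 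Substituting back into Lemma~\ref{L1} and invoking the arbitrariness of $\zeta$ yields \eqref{bou-EQ} pointwise in $\C$.

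Next, armed with \eqref{bou-EQ}, I would return to the identity of Lemma~\ref{L1} and test against $\zeta\in C^\infty(\overline{\C})$ with support in $\overline{\Omega}\times[\alpha,\beta]$ for arbitrary $0<\alpha<\beta<+\infty$. Such $\zeta$ is again in $\mathcal{A}^*$ and vanishes on $\partial_B\C$, so integration by parts leaves behind only the lateral boundary piece:
\[
\int_{\partial_L \C} \big(\mathcal{B}(y,\nabla u)\nabla\varphi\big)\cdot\nu\;\zeta = 0,
\]
the interior part cancelling by \eqref{bou-EQ} and the $\partial_B\C$ part vanishing by support. Since the trace of $\zeta$ on $\partial_L\C$ is arbitrary (with compact support in $\partial\Omega\times(\alpha,\beta)$, and $\alpha,\beta$ free), I conclude $(\mathcal{B}(y,\nabla u)\nabla\varphi)\cdot\nu=0$ on $\partial_L\C$.

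The last step is to identify $(\mathcal{B}\nabla\varphi)\cdot\nu$ with $a(y,|\nabla u|)\,\partial_\nu\varphi$ on the lateral boundary. Because $\nu=(\tilde\nu,0)$ and $u$ satisfies $\partial_\nu u=0$ on $\partial_L\C$, the explicit form \eqref{DEF:B} gives
\[
\mathcal{B}(y,\nabla u)\,\nu = a(y,|\nabla u|)\,\nu + \frac{a_t(y,|\nabla u|)}{|\nabla u|}\,(\nabla u\cdot\nu)\,\nabla u = a(y,|\nabla u|)\,\nu
\]
on $\partial_L\C$ (with the natural convention of \eqref{DEF:B} when $\nabla u=0$). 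By the symmetry of $\mathcal{B}$, $(\mathcal{B}\nabla\varphi)\cdot\nu=\nabla\varphi\cdot(\mathcal{B}\nu)=a(y,|\nabla u|)\,\partial_\nu\varphi$, and the strict positivity of $a$ from \eqref{ELLIPTICITY} upgrades the vanishing of this quantity to \eqref{bou-00}. The argument is essentially mechanical; the only point requiring attention is the admissibility of the chosen test functions in $\mathcal{A}^*$, and this is a direct consequence of their bounded support together with the continuity assumptions on $a$, $g$, and $u$.
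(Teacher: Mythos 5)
Your proof is correct and follows the same route the paper sketches very tersely: test Lemma~\ref{L1} first against $\zeta\in C_c^\infty(\C)$ to extract the interior equation, then against $\zeta$ supported near the lateral boundary to obtain the flux condition. The only detail you make explicit that the paper leaves implicit is the final identification $(\mathcal{B}(y,\nabla u)\nabla\varphi)\cdot\nu = a(y,|\nabla u|)\,\partial_\nu\varphi$ on $\partial_L\C$, which uses $\partial_\nu u=0$ to see that $\mathcal{B}(y,\nabla u)\,\nu = a(y,|\nabla u|)\,\nu$ there together with the strict positivity of $a$ from \eqref{ELLIPTICITY}; this is a worthwhile clarification of the same argument, not a different one.
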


\begin{proof}
We use Lemma \ref{L1}. Indeed, by taking~$\zeta$ supported 
inside~$\C$, we obtain \eqref{bou-EQ}.
By taking~$\zeta$ supported
near any given point of~$\partial_L\C$, we conclude that also \eqref{bou-00} holds.
\end{proof}

With this, we are in the position of obtaining
a strict sign for nonnegative minimizers, up to the boundary,
in the spirit of a strict comparison principle,
according to
the following result:

\begin{corollary}\label{LK:87hhj}
Assume that $I(\varphi)\le 0$ for $\varphi\in 
{\mathcal{A}}^* \cap C^2(\C)\cap C^1(
\overline\Omega\times[\alpha,\beta])$, for any~$\beta>\alpha>0$. Assume in addition that~$\varphi(x,y)\ge0$ for any~$(x,y)\in\C$.

Then either~$\varphi(x,y)>0$ for any~$x\in\overline\Omega$
and~$y>0$, or~$\varphi(x,y)=0$ for any~$x\in\Omega$ and any~$y>0$.
\end{corollary}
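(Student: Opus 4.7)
The plan is to recognize that the lemma just preceding this corollary turns the problem into a purely classical one: the nonnegative function $\varphi$ solves a linear second-order equation
\[
\sum_{i,j=1}^{n+1}{\mathcal{B}}_{ij}(y,\nabla u)\,\partial^2_{X_iX_j}\varphi+\sum_{j=1}^{n+1}\Bigl(\sum_{i=1}^{n+1}\partial_{X_i}{\mathcal{B}}_{ij}(y,\nabla u)\Bigr)\partial_{X_j}\varphi-g_u(y,u)\,\varphi=0
\]
in $\C$, together with the homogeneous Neumann condition $\partial_\nu\varphi=0$ on $\partial_L\C$. By Lemma~\ref{B-POS}, the principal part is positive definite at every point; by the $C^2$ regularity of $u$ and the continuity hypotheses on $a$ and $a_t$, the coefficients ${\mathcal{B}}_{ij}(y,\nabla u)$ and $\partial_{X_i}{\mathcal{B}}_{ij}(y,\nabla u)$ are continuous in $\C$, while $g_u(y,u)$ is locally bounded (since $g$ is locally Lipschitz in the second variable and $u$ is continuous). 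Thus on every slab $\overline\Omega\times[\alpha,\beta]$ with $0<\alpha<\beta$ the operator is uniformly elliptic with bounded coefficients.

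The strategy is then the standard dichotomy coming from the strong maximum principle and the Hopf boundary point lemma. Define the zero set $Z:=\{(x,y)\in\overline\Omega\times(0,+\infty):\varphi(x,y)=0\}$. Trivially $Z$ is closed. I claim $Z$ is also open in $\overline\Omega\times(0,+\infty)$, and this is the heart of the argument. Assume $(x_0,y_0)\in Z$. If $x_0\in\Omega$, the point is interior to $\C$, and the classical strong maximum principle (applicable to nonnegative solutions of a linear elliptic equation with bounded, not necessarily signed, zero-order term, e.g.\ via the local Harnack inequality) forces $\varphi\equiv 0$ in a small ball around $(x_0,y_0)$. If instead $x_0\in\partial\Omega$, then $(x_0,y_0)\in\partial_L\C$, and since $\Omega$ is $C^{4,\alpha}$ the interior sphere condition holds at $(x_0,y_0)$ with respect to the conormal direction $\nu=(\tilde\nu,0)$. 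Hopf's boundary lemma applied to $\varphi$ (nonnegative, vanishing at $(x_0,y_0)$, not identically zero in a neighborhood) would give $\partial_\nu\varphi(x_0,y_0)<0$, contradicting $\partial_\nu\varphi(x_0,y_0)=0$; hence $\varphi$ must vanish identically on an interior half-ball at $(x_0,y_0)$, and the previous case shows $\varphi\equiv 0$ in a full neighborhood of $(x_0,y_0)$ in $\overline\Omega\times(0,+\infty)$.

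Since $\overline\Omega\times(0,+\infty)$ is connected and $Z$ is both open and closed in it, either $Z=\varnothing$ (so $\varphi>0$ on $\overline\Omega\times(0,+\infty)$) or $Z=\overline\Omega\times(0,+\infty)$ (so $\varphi\equiv 0$ on $\Omega\times(0,+\infty)$), which is exactly the required dichotomy. The main obstacle is checking that the classical strong maximum principle and Hopf lemma really do apply in our quasilinear/weighted setting, but this is not a serious issue since the regularity built into the hypotheses $\varphi\in C^2(\C)\cap C^1(\overline\Omega\times[\alpha,\beta])$ together with Lemma~\ref{B-POS} reduce the question to a bona fide linear uniformly elliptic problem with continuous coefficients on each compact subcylinder away from $\{y=0\}$.
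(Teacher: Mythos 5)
Your proposal is correct and follows essentially the same path as the paper: pass to the linearized problem \eqref{bou-EQ}--\eqref{bou-00} furnished by the preceding lemma, invoke Lemma~\ref{B-POS} and the regularity hypotheses \eqref{hp su u-2} to obtain a uniformly elliptic linear operator with continuous coefficients on each compact subcylinder away from $\{y=0\}$, and conclude via the strong maximum principle and the Hopf boundary point lemma combined with the Neumann condition. The only real difference is organizational: the paper fixes a smooth subdomain $\Omega_{\alpha,\beta}$ squeezed between $\Omega\times(\alpha,\beta)$ and $\Omega\times(\alpha/2,\beta)$, applies Hopf there, and lets $\alpha\to0$, $\beta\to\infty$; you phrase the same dichotomy as an open-and-closed decomposition of the zero set in the connected set $\overline\Omega\times(0,+\infty)$. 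You also spell out the interior-zero case, which the paper leaves implicit; that is a welcome addition. One small point deserves to be tightened: the Hopf lemma in its textbook form (e.g.\ Corollary~1.6, Chapter~2 of~\cite{HAN-LIN}, which the paper cites) requires a sign condition on the zero-order coefficient. The paper handles this by setting $M:=\sup_{\Omega_{\alpha,\beta}}|g_u(y,u)|$ and rewriting the equation as the subsolution inequality
\[
\sum_{i,j} a_{ij}\,\partial^2_{X_iX_j}\varphi+\sum_j b_j\,\partial_{X_j}\varphi+c\,\varphi=-M\varphi\le0,\qquad c:=-M-g_u(y,u)\le0.
\]
You appeal instead to ``not necessarily signed zero-order term, e.g.\ via the local Harnack inequality'' for the interior step and do not specify what form of Hopf you use at the boundary; this is fixable (either by the same $M$-subtraction, or by citing a version of Hopf's lemma adapted to nonnegative solutions with bounded, sign-changing $c$), and you should make the choice explicit.
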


\begin{proof}
Suppose that~$\varphi(x_o,y_o)=0$ for some~$x_o\in\partial\Omega$
and~$y_o>0$. Then, we look at the equation
satisfied by~$\varphi$ in~$\Omega_{\alpha,\beta}$,
where~$\Omega_{\alpha,\beta}$ is a smooth domain that contains~$\Omega
\times (\alpha,\beta)$ and is contained
in~$\Omega
\times \left(\alpha/2,\beta\right)$
with~$0<\alpha<y_o<\beta$.
Indeed, we define
\begin{eqnarray*}
&& M:=\sup_{(x,y) \in \Omega_{\alpha,\beta}} \big| g_u(y,u(x,y))\big|<+\infty, \\
&& a_{ij}(x,y):= {\mathcal{B}}_{ij}(y,\nabla u(x,y)\big),\\
&& b_j(x,y):= \sum_{i=1}^{n+1}
\partial_{X_i} {\mathcal{B}}_{ij}\big(y,\nabla u(x,y)\big) \\
&& c(x,y):= -M- g_u(y,u(x,y)).
\end{eqnarray*}
Notice that~$c\le0$ and~$a_{ij}$ defines an elliptic matrix
on~$\Omega\times (\alpha,\beta)$, for fixed~$\alpha$ and~$\beta$,
thanks to Lemma~\ref{B-POS}. Moreover,
by~\eqref{bou-EQ}
$$ \sum_{i,j=1}^{n+1}
a_{ij} \partial^2_{X_i X_j}\varphi+\sum_{j=1}^{n+1} b_j
\partial_{X_j}\varphi +c \varphi = -M\varphi\le0.$$
Notice that~$a_{ij}$, $b_j$, $c\in C(\overline\Omega_{\alpha,\beta})$,
thanks to \eqref{hp su u-2}.
Also, we have that~$\varphi$ attains its minimum
in~$\overline\Omega_{\alpha,\beta}$
at~$(x_o,y_o)$. As a consequence, by the Hopf Lemma (see e.g. Corollary 1.6
in Chapter~2 of~\cite{HAN-LIN}), either~$\varphi$ vanishes identically in~$\Omega_{\alpha,\beta}$
or~$0\ne\partial_\nu\varphi(x_o,y_o)$. The latter possibility cannot hold, in light of~\eqref{bou-00},
and therefore~$\varphi$ must,
in this case,
vanish identically in the domain~$\Omega_{\alpha,\beta}$.
Since~$\alpha$ can be taken as close to~$0$ as we wish
and~$\beta$ can be taken arbitrarily large,
this implies that~$\varphi$ must
vanish everywhere in~$\Omega\times(0,+\infty)$.
\end{proof}

As a matter of fact, we can strengthen Corollary~\ref{LK:87hhj}
by removing the sign assumption on~$\varphi$.
Namely, we have that:

\begin{proposition}\label{LK:87hhj:C}
Assume
that~$I(\varphi)\le 0$ for~$\varphi\in {\mathcal{A}^*}\cap
C^2(\C)\cap C^1(
\overline\Omega\times[\alpha,\beta])$, for any~$\beta>\alpha>0$. Then, one and only one of these three possibilities holds true:
\begin{itemize}
\item $\varphi(x,y)>0$ for any~$x\in\overline\Omega$ and~$y>0$, 
\item $\varphi(x,y)<0$ for any~$x\in\overline\Omega$ and~$y>0$, 
\item $\varphi(x,y)=0$ for any~$x\in\Omega$ and any~$y>0$.
\end{itemize}\end{proposition}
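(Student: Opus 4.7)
The plan is to reduce to Corollary \ref{LK:87hhj} via the positive/negative part decomposition $\varphi = \varphi_+ - \varphi_-$, where $\varphi_\pm := \max(\pm\varphi, 0)$. First I would verify that $\varphi_\pm \in \mathcal{A}^*$: by Stampacchia's theorem the weak gradients satisfy $\nabla \varphi_\pm = \nabla \varphi \cdot \chi_{\{\pm\varphi > 0\}}$ almost everywhere, so $|\nabla \varphi_\pm| \le |\nabla \varphi|$ and $\varphi_\pm^2 \le \varphi^2$; the integrability and trace conditions in \eqref{JH:AHJK} for $\varphi_\pm$ then follow from those for $\varphi$.

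Next I would compute $I(\varphi)$ in terms of $\varphi_\pm$. The key observation is that $\nabla \varphi_+$ and $\nabla \varphi_-$ have disjoint supports almost everywhere, so $\langle \mathcal{B}(y, \nabla u) \nabla \varphi_+, \nabla \varphi_-\rangle = 0$ almost everywhere; combined with $\varphi_+ \varphi_- \equiv 0$ (and analogously on the trace over $\Omega \times \{0\}$), this yields the identity $I(\varphi) = I(\varphi_+) + I(\varphi_-)$. Since Lemma \ref{lem: density stability} gives $I(\varphi_\pm) \ge 0$, the hypothesis $I(\varphi) \le 0$ forces $I(\varphi_+) = I(\varphi_-) = 0$, so both $\varphi_+$ and $\varphi_-$ are themselves minimizers of $I$.

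Applying Lemma \ref{L1} to $\varphi_\pm$, these minimizers are weak solutions of the linearized Neumann problem given by \eqref{bou-EQ} and \eqref{bou-00}. Standard linear elliptic regularity, exploiting the uniform ellipticity of $\mathcal{B}(y, \nabla u)$ on strips $\overline{\Omega}\times[\alpha,\beta]$ furnished by Lemma \ref{B-POS}, the smoothness of $u$ from \eqref{hp su u-2}, and the structural assumptions on $a$ and $g$, then upgrades $\varphi_\pm$ to the regularity class $C^2(\mathcal{C}) \cap C^1(\overline{\Omega} \times [\alpha, \beta])$ required for Corollary \ref{LK:87hhj}. That corollary, applied to the nonnegative functions $\varphi_+$ and $\varphi_-$, shows that each of them is either strictly positive throughout $\overline{\Omega} \times (0, \infty)$ or identically zero.

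The identity $\varphi_+ \varphi_- \equiv 0$ prevents both from being strictly positive simultaneously, leaving exactly three mutually exclusive cases: $\varphi_+ > 0$ with $\varphi_- \equiv 0$ (hence $\varphi > 0$ everywhere); $\varphi_- > 0$ with $\varphi_+ \equiv 0$ (hence $\varphi < 0$ everywhere); or $\varphi_+ \equiv \varphi_- \equiv 0$ (hence $\varphi \equiv 0$). The principal obstacle is the regularity step: the cut-off functions $\varphi_\pm$ are a priori only Lipschitz, with potential corners along $\{\varphi = 0\}$ where $\nabla \varphi \neq 0$, and the transition from weak solvability of the linearized equation to the classical regularity needed for the Hopf lemma in Corollary \ref{LK:87hhj} must be handled via standard interior and boundary Schauder estimates applied on compact subsets of $\overline{\Omega} \times (0, \infty)$.
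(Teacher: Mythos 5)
Your first two steps are fine and essentially match the paper: the decomposition $\varphi=\varphi^+-\varphi^-$, the verification $\varphi^\pm\in\mathcal{A}^*$, the identity $I(\varphi)=I(\varphi^+)+I(\varphi^-)$ (from disjoint supports), and the deduction $I(\varphi^\pm)=0$ via Lemma~\ref{lem: density stability} are all correct, and you correctly invoke Lemma~\ref{L1} to see that $\varphi^\pm$ solves the linearized equation weakly.

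The genuine gap is the regularity step. You want to apply Corollary~\ref{LK:87hhj} \emph{directly to $\varphi^\pm$}, and for that you need $\varphi^\pm\in C^2(\C)\cap C^1(\overline\Omega\times[\alpha,\beta])$. You assert this ``follows by standard Schauder estimates,'' but the coefficient matrix $\mathcal{B}(y,\nabla u(x,y))$ is in general only \emph{continuous}, not H\"older: $a$ is only $C^0$ at $t=0$, and the term $a_t(y,|\eta|)\,\eta_i\eta_j/|\eta|$ in~\eqref{DEF:B} is controlled at $\eta\to 0$ only up to continuity via~\eqref{ELLIPTICITY:3}. Schauder estimates require $C^{0,\alpha}$ coefficients, so there is nothing to justify $C^2$ regularity of $\varphi^\pm$ near $\{\nabla u=0\}$. (Note also that the whole argument is self-referential: the corner of $\varphi^\pm$ at $\{\varphi=0,\nabla\varphi\neq0\}$ is exactly what you must rule out, so asserting smoothness of $\varphi^\pm$ presupposes a version of the conclusion.) The paper avoids this issue entirely by a more robust argument: it applies the weak Harnack inequality (Theorem~8.20 in~\cite{GT-rep-1998}), which only needs bounded measurable coefficients, to the nonnegative weak solution $\varphi^\pm$, to conclude that $\varphi^\pm$ either vanishes identically or never vanishes in $\C$; this establishes that $\varphi$ has constant sign, and \emph{then} Corollary~\ref{LK:87hhj} is applied to $\varphi$ or $-\varphi$, which is smooth by hypothesis. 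Replacing your Schauder claim with the weak Harnack/strong-maximum-principle argument on $\varphi^\pm$ and applying Corollary~\ref{LK:87hhj} to the original $\varphi$ rather than to $\varphi^\pm$ would close the gap.
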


\begin{proof} We claim that 
\begin{equation} \label{UI:per}
{\mbox{either $\varphi\ge0$ 
or $\varphi\le0$ in $\C$.}} \end{equation} 
To prove this, we 
consider~$\varphi^+:=\max\{ \varphi,0\}$ and~$\varphi^-:=\max\{ 
-\varphi,0\}$. We observe that~$\varphi^\pm \in 
{\mathcal{A}^*}$.
Thus,
by Lemma~\ref{L1}, 
\begin{align*}
I(\varphi^{\pm}) & = \int_\C \langle 
{\mathcal{B}}(y,\nabla u\big)\,\nabla\varphi^\pm, 
\,\nabla\varphi^\pm \rangle +\int_\C g_u(y,u)\,(\varphi^\pm)^2 
-\int_{\partial_B\C} f'(u)\,(\varphi^\pm)^2 \\
& = \int_\C \langle {\mathcal{B}}(y,\nabla u\big)\,\nabla\varphi, 
\,\nabla\varphi^\pm \rangle +\int_\C g_u(y,u)\,\varphi\,\varphi^\pm 
-\int_{\partial_B\C} f'(u)\,\varphi\,\varphi^\pm = 0.
\end{align*}
Hence, using again Lemma~\ref{L1},
we have that, for any~$\zeta\in C^\infty_0 (\C)$,
$$ \int_\C 
\langle {\mathcal{B}}(y,\nabla u\big)\,\nabla\varphi^\pm, 
\,\nabla\zeta \rangle +\int_\C g_u(y,u)\,\varphi^\pm\,\zeta =0.$$ 
Therefore, the function~$\varphi^\pm$ is a weak solution to 
\[
\div\big(\mathcal{B}(y,\nabla u)\nabla \varphi^{\pm}\big) = g_u(y,u) \varphi^{\pm} \qquad \text{in $\C$},
\]
according to 
the notation of Chapter~8 in~\cite{GT-rep-1998} (in particular, 
formula~(8.5) there is a consequence of Lemma~\ref{B-POS} here and 
formula~(8.6) applies here with~$b^i:=c^i:=0$ and~$d:=g_u(\cdot,u)$). 
Consequently, by Theorem~8.20 in~\cite{GT-rep-1998}, for any point~$X_o
\in\C$ and any~$R>0$ such that~$B_{4R}(X_o)\subset\C$,
$$ \sup_{B_R(X_o)} \varphi^\pm \le C_R\,\inf_{B_R(X_o)} \varphi^\pm,$$
for some~$C_R>0$.
This implies that if~$\varphi^\pm$ vanishes somewhere in~$\C$,
then it must vanish identically in~$\C$,
thus completing the proof of~\eqref{UI:per}.

Thanks to~\eqref{UI:per} we can now exploit
Corollary~\ref{LK:87hhj} (applying this to~$\varphi$
if~$\varphi\ge0$ or to~$-\varphi$ if~$\varphi\le0$).
{F}rom this, we obtain the desired result.
\end{proof}

With this preparatory work, we are now in the position
of finishing the proof of Theorem~\ref{P-CA}:

\begin{proof}[Completion of the proof of Theorem~\ref{P-CA}]
By \eqref{hp su u-2} and the integrability assumption \eqref{integrabilty assumption}, we have that~$u_{x_j} \in \mathcal{A}^*$ for every $j=1,\dots,n$. Thus, by Lemma \ref{lem: density stability}, we have that~$I(u_{x_j}) \ge 0$.
On the other hand,
using Corollary~\ref{HJ:cor:pho-2}
and Lemma~\ref{abbiamo}, 
\[
\sum_{j=1}^n I (u_{x_j}) =
\int_{\partial_L\C}
a(y,|\nabla u|)\,\big(\nabla u \cdot \partial_\nu( \nabla u)\big)
\le0,
\]
so that necessarily~$I(u_{x_j})=0$
for any~$j=1,\dots,n$. Therefore, by Proposition~\ref{LK:87hhj:C}, we deduce that
either~$u_{x_j}$ never vanishes in~$\overline\Omega\times(0,+\infty)$,
or~$u_{x_j}$ vanishes identically in~$\C$. But the first possibility cannot occur: 
to see this, let us slide a hyperplane normal to~$e_j$
till it touches~$\partial\Omega$ at some point~$x^\star_j$.

By construction, the normal of~$\partial\Omega$ at~$x^\star_j$
is~$e_j$, hence the homogeneous Neumann condition $\pa_\nu u=0$ on $\pa_L \mathcal{C}$
implies that~$
0=\partial_\nu u(x^\star_j, 1)=u_{x_j}(x^\star_j, 1)$. This shows that the first above-mentioned
possibility cannot occur, and as a consequence~$u_{x_j}$ vanishes
identically in~$\C$. Since this is valid for any~$j=1,\dots,n$,
this implies that~$u$ does not depend on~$x$.
\end{proof}

\section{Classification of stable solutions for convex/concave
nonlinearities: proof of Theorems~\ref{TH:CC:1}}\label{LK:COCO-0}

We now address the case in which~$f$ satisfies suitable
convexity or concavity assumption. In this setting, we need some
preliminary work in order to detect the sign of the nonlinearity
at the maximum or at the minimum. We stress that, in this section, we always suppose that $u$ is a bounded stable solution to \eqref{general problem}, and that assumptions \eqref{LO} and \eqref{NUOVE} are in force.

\subsection{Detecting the sign of the nonlinearity}

A classical tool in partial differential equations
is the use of various forms of maximum and comparison principles
in order to check the sign of the nonlinearities
at the points in which solutions of elliptic equations attain
their extremal values. In our setting, we adapt these
type of strategies, with the aim of detecting the sign
of~$f$ at the extremal values
for solutions of the reaction-diffusion equation \eqref{general problem}.
This goal will be accomplished in Corollary~\ref{C:PT}.
For this, we need an auxiliary result, which locates
the extremal values at the bottom boundary of the domain.

\begin{lemma}\label{0oPPy}
Let $v(x):=u(x,0)$.
Then
$$ \min_{x\in\overline\Omega} v(x) =\inf_{(x,y)\in\C} u(x,y).$$
\end{lemma}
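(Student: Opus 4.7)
The plan is to establish the nontrivial inequality $\inf_\C u \ge M := \min_{\overline\Omega} v$; the reverse direction is immediate from continuity of $u$ on $\overline{\C}$, since $\inf_\C u = \inf_{\overline{\C}} u \le u(x_1,0) = M$ for any minimizer $x_1$ of $v$. To prove $u \ge M$ in $\C$, I would consider the nonnegative Lipschitz function $w := (M - u)^+$, which vanishes identically on $\partial_B\C$ because $v \ge M$ there. Let $\phi_R \in C^\infty([0,+\infty))$ be a cutoff with $\phi_R \equiv 1$ on $[0,R]$, $\phi_R \equiv 0$ on $[2R,+\infty)$, and $|\phi_R'|\le C/R$, and use $\varphi := w\,\phi_R^2$ as test function in the weak formulation~\eqref{WEAK}. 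Admissibility in $\mathcal{A}$ follows from the boundedness of $u$, from $|\nabla w| \le |\nabla u|$, from the first condition in~\eqref{NUOVE}, and from the fact that $\phi_R'$ is supported away from $y=0$, so that $a(y,|\nabla u|)$ is bounded on $\supp\phi_R'$ by continuity. Since $g \equiv 0$ by~\eqref{LO} and $w|_{y=0}=0$, the boundary reaction integral on the right-hand side of~\eqref{WEAK} vanishes.

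Using the pointwise almost-everywhere identities $\nabla u \cdot \nabla w = -|\nabla w|^2$ and $w\,\partial_y u = -\tfrac{1}{2}\partial_y(w^2)$ (valid separately on $\{u<M\}$ and $\{u\ge M\}$), the weak formulation reduces to
\[
\int_\C a(y,|\nabla u|)\,\phi_R^2\,|\nabla w|^2 \;=\; -\int_\C a(y,|\nabla u|)\,\phi_R\,\phi_R'\,\partial_y(w^2).
\]
A weighted Young inequality applied to the right-hand side, together with $|\partial_y w| \le |\nabla w|$, allows one to absorb half of the left-hand side, leaving
\[
\int_\C a(y,|\nabla u|)\,\phi_R^2\,|\nabla w|^2 \;\le\; \frac{C\,\|w\|_\infty^2}{R^2}\int_{\Omega\times(R,2R)} a(y,|\nabla u|).
\]
The second condition in~\eqref{NUOVE} forces the right-hand side to vanish as $R\to+\infty$, so Fatou's lemma yields $\int_\C a\,|\nabla w|^2 = 0$. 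Since $a>0$ by~\eqref{ELLIPTICITY} and $\C$ is connected, $w$ is constant; being continuous on $\overline{\C}$ and zero on $\partial_B\C$, that constant is zero, which proves $u \ge M$ in $\C$.

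The two delicate points I expect to need careful verification are, first, the admissibility of $\varphi = w\phi_R^2$ in $\mathcal{A}$, in particular that $a(y,|\nabla u|)\,|\nabla\varphi|^2 \in L^1(\C)$; and second, the essential use of both $g\equiv 0$ from~\eqref{LO} (without which an uncontrolled $\int_\C g(y,u)\varphi$ term appears on the left-hand side) and of the second integrability condition in~\eqref{NUOVE} (without which the error term generated by the cutoff cannot be made infinitesimal). Notice that neither the convexity of $\Omega$ nor the stability of $u$ is used here; these enter in the subsequent step where the sign of $f$ at the minimum of $v$ is extracted via a Hopf-type argument on the bottom face.
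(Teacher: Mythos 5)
Your proof is correct and follows essentially the same route as the paper: a Caccioppoli-type estimate obtained by testing the weak formulation~\eqref{WEAK} against the cutoff truncation of the positive part $(M-u)^+$, using $g\equiv 0$ and the vanishing of $w$ on $\partial_B\C$, and then sending the cutoff to infinity via the second condition in~\eqref{NUOVE}. The only differences are cosmetic (a direct argument with $\ell=M$ rather than a contradiction with an intermediate level $\ell\in(b_-,b_+)$, the test function $w\phi_R^2$ with Young absorption rather than $w\tau_R$ with Cauchy--Schwarz, which even lets you bypass the first condition in~\eqref{NUOVE} for the limit estimate), and your closing forecast that the sign of $f(c)$ is later extracted by a ``Hopf-type argument'' is slightly off---Corollary~\ref{C:PT} in fact integrates $\partial_y u$ in $y$ using the continuity assumption on $a\,\partial_y u$ from~\eqref{NUOVE}.
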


\begin{proof} We argue by contradiction and we suppose that
$$ b_+ :=\min_{x\in\overline\Omega} v(x) > \inf_{(x,y)\in\C} u(x,y)
=b_-.$$
As a consequence, there exists
\begin{equation}\label{ell-def-0}
\ell\in \left(b_- ,\,b_+ \right).
\end{equation}
We define~$w(x,y):=\big(\ell-u(x,y)\big)^+$, and we observe that
$$ 0\le w \le \ell - \inf_{(x,y)\in\C} u(x,y) <
b_+ - b_- ,$$
so that $w\in L^\infty(\C)$. We claim that
\begin{equation}\label{ell-def-2}
\overline{\{ u<\ell \}} \,\subset\, {\overline\Omega}\times(0,+\infty) \quad \Longrightarrow \quad {\mbox{$w=0$ on $\partial_B \C$.}}
\end{equation}
To prove this, suppose by contradiction, that there exists
a sequence~$(x_k,y_k)\in\{ u<\ell \}$ with~$y_k\to0$ as~$k\to+\infty$.
Then, up to subsequence, we have that $x_k\to x_o$, for some~$x_o\in
\overline\Omega$, and by the continuity of $u$ (recall~\eqref{hp su u-2})
$$ \ell \ge \lim_{k\to+\infty }u(x_k,y_k)=u(x_o,0)=v(x_o)\ge
\min_{x\in\overline\Omega} v(x)=b_+.$$
This is in contradiction with~\eqref{ell-def-0}, and hence it
proves~\eqref{ell-def-2}. 

Now we take a smooth function~$\tau_R:[0,+\infty)\to[0,1]$ such that
$\tau_R=1$ in~$[0,R]$, $\tau_R=0$ in~$[2R,+\infty)$
and $|\tau'_R|\le 10/R$. We use~\eqref{WEAK}
with~$\varphi:= w\tau_R$ (notice that such function
lies in~${\mathcal{A}}$, thanks to~\eqref{hp su u-2}):
in this way, and recalling~\eqref{LO}
and~\eqref{ell-def-2},
we obtain that
\begin{equation}\label{WEAK:L2KJaaH78}
\begin{split}
0\,&=\int_{\C} a(y,|\nabla u|) \,\nabla u\cdot\nabla w\,\tau_R
+\int_{\C} a(y,|\nabla u|) \,\nabla u\cdot\nabla \tau_R\,w \\& = -
\int_{\C} a(y,|\nabla u|) \,|\nabla w|^2\,\tau_R
+\int_{\C} a(y,|\nabla u|) \,\nabla u\cdot\nabla \tau_R\,w
.\end{split}
\end{equation}
Now we use the positivity of $a$ and the boundedness of $w$ to see that
\begin{eqnarray*}
&& \left|\int_{\C} a(y,|\nabla u|) \,\nabla u\cdot\nabla \tau_R\,w\right|
\\&&\qquad\le \|w\|_{L^\infty(\C)}\, \sqrt{
\int_{\C} a(y,|\nabla u|) \,|\nabla u|^2 }\,
\sqrt{
\int_{\C} a(y,|\nabla u|) \,|\nabla \tau_R|^2 }\\
&&\qquad\le
\|w\|_{L^\infty(\C)}\, \sqrt{
\int_{\C} a(y,|\nabla u|) \,|\nabla u|^2 }\,
\sqrt{\frac{C}{R^2}
\int_{\Omega\times(R,2R)} a(y,|\nabla u|) },\end{eqnarray*}
and this quantity is infinitesimal as~$R\to+\infty$, thanks to~\eqref{NUOVE}. Using this and the Dominated Convergence Theorem,
we can pass to the limit into formula~\eqref{WEAK:L2KJaaH78}, obtaining
\begin{equation*}
0\le
-\int_{\C} a(y,|\nabla u|) \,|\nabla w|^2
.\end{equation*}
This implies that~$\nabla w$ vanishes identically, 
and so~$w$ is constant. Thus, recalling~\eqref{ell-def-2},
we conclude that~$w$ vanishes identically, and therefore~$u(x,y)\ge\ell$
for any~$(x,y)\in\C$. Accordingly, we have that $b_- \ge \ell$, in contradiction with~\eqref{ell-def-0}.
\end{proof}

\begin{corollary}\label{C:PT}
Under the previous notation, let
$$ c:= \min_{x\in\overline\Omega} v(x) = \inf_{(x,y) \in \mathcal{C}} u(x,y).$$
Then~$f(c)\le0$.
\end{corollary}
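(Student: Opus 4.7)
The plan is to combine Lemma~\ref{0oPPy}, which places the infimum on the bottom face, with the obvious sign of $\partial_y u$ at a bottom minimum, and then read off $f(c)$ from the boundary equation.

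First, by Lemma~\ref{0oPPy} the infimum $c$ is attained as $u(x_0,0)$ for some $x_0\in\overline\Omega$. Therefore $u(x_0,y)\ge c=u(x_0,0)$ for every $y\ge0$. Since $u(x_0,\cdot)\in C^2((0,+\infty))\cap C([0,+\infty))$, the mean value theorem applied on $[0,y]$ produces, for every $y>0$, some $\xi_y\in(0,y)$ with
\[
\partial_y u(x_0,\xi_y)=\frac{u(x_0,y)-u(x_0,0)}{y}\ge0.
\]
Using $a>0$ we get $a(\xi_y,|\nabla u(x_0,\xi_y)|)\,\partial_y u(x_0,\xi_y)\ge0$.

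Next, define $A(x,y):=a(y,|\nabla u(x,y)|)\,\partial_y u(x,y)$. The third condition in \eqref{NUOVE} gives $A\in C(\overline\Omega\times[0,+\infty))$, so letting $y\to 0^+$ (hence $\xi_y\to 0^+$) yields $A(x_0,0)\ge 0$ by continuity.

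Finally, for each $x\in\Omega$ the classical interpretation \eqref{last equation} of the bottom boundary condition reads $\lim_{y\to 0}A(x,y)=-f(u(x,0))$, which together with the continuity of $A$ and of $u$ on $\overline{\C}$ upgrades to the identity $A(x,0)=-f(u(x,0))$ for every $x\in\overline\Omega$. Applied at $x_0$ this gives $-f(c)=A(x_0,0)\ge 0$, i.e. $f(c)\le 0$, as required.

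The only delicate point is the last step when $x_0\in\partial\Omega$: the pointwise boundary relation is a priori stated only for interior $x$. I would resolve this by continuity, approximating $x_0$ by points $x_k\in\Omega$ and passing to the limit in $A(x_k,0)=-f(u(x_k,0))$ using $A\in C(\overline\Omega\times[0,+\infty))$ and $u\in C(\overline\C)$ from \eqref{hp su u-2}; no further regularity theory is needed.
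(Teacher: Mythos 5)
Your proof is correct and reaches the conclusion by a genuinely different (though closely related) route from the paper's. The paper argues by contradiction: assuming $f(c)>0$, it uses the continuity of $A(x,y)=a(y,|\nabla u|)\partial_y u$ from~\eqref{NUOVE} to obtain a strict negative upper bound $A(x_\star,y)\le -f(c)/2$ on a whole interval $(0,y_\star]$, integrates the resulting lower bound on $-\partial_y u$ to produce a fixed drop $b_\star>0$ in $u(x_\star,\cdot)$ below the bottom value, and derives a contradiction with Lemma~\ref{0oPPy}. You instead read off the sign directly: Lemma~\ref{0oPPy} places the infimum on $\{y=0\}$, so the one-variable function $u(x_0,\cdot)$ has a minimum at $y=0$, the mean value theorem gives $\partial_y u(x_0,\xi_y)\ge 0$, and multiplying by $a>0$ and letting $\xi_y\to 0^+$ with the continuity of $A$ from~\eqref{NUOVE} yields $A(x_0,0)\ge 0=-f(c)$. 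This is essentially the contrapositive phrasing of the paper's argument, and it is shorter and avoids the auxiliary quantity $b_\star$.

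One point deserves a warning, though it affects your write-up and the paper's in equal measure. You appeal to~\eqref{last equation} as if it were given, but the paper explicitly presents~\eqref{last equation} as only the ``classical interpretation'' and emphasizes that solutions are defined via the weak formulation~\eqref{WEAK}, not via~\eqref{last equation}. The identity $A(x,0)=-f(u(x,0))$ for all $x\in\overline\Omega$ is therefore something that must be derived, not invoked: under the regularity of~\eqref{hp su u-2} and~\eqref{NUOVE} it follows from~\eqref{WEAK} by integrating by parts on $\Omega\times(\epsilon,\infty)$, letting $\epsilon\to 0^+$, and using the continuity of $A$ and $f(u(\cdot,0))$ to upgrade the resulting almost-everywhere identity on $\Omega$ to a pointwise one on $\overline\Omega$. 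The paper's proof makes the same silent jump (its ``By continuity (see~\eqref{NUOVE})'' step presupposes $\lim_{y\to 0}A(x_\star,y)=-f(c)$), so you are not missing anything the paper supplies; but since you flagged the issue yourself, note that your suggested fix (approximating $x_0\in\partial\Omega$ by interior points and passing to the limit) is circular as phrased --- it presumes the very identity $A(x_k,0)=-f(u(x_k,0))$ that needs justification from~\eqref{WEAK}. The real content is the integration-by-parts derivation above, after which the extension to $\partial\Omega$ is automatic by continuity.
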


\begin{proof} Let $x_\star \in \overline{\Omega}$ be a minimum point for $v$, and let us assume by contradiction that~$f(c)>0$. By continuity (see~\eqref{NUOVE}),
there exists~$y_\star>0$ such that 
$$ a\big(y, |\nabla u(x_\star, y)|\big) \,\partial_y u(x_\star,y)
\le -\frac{f(c)}{2} \qquad {\mbox{for all }} y\in(0,y_\star].
$$
Since~$a(y,t)>0$ for any~$y>0$ and $t \ge 0$,
for every~$y\in (0,y_\star]$
$$ \partial_y u(x_\star,y)
\le -\frac{f(c)}{2}\,\frac{1}{
a\big(y, |\nabla u(x_\star, y)|\big) } .$$
Hence, for any~$y_o\in 
\left(0,\frac{y_\star}{2}\right]$
\begin{eqnarray*}
&& u(x_\star, y_\star) - u(x_\star,y_o)
= \int_{y_o}^{y_\star} \partial_y u(x_\star,y)\,dy \\ &&\qquad
\le -\frac{f(c)}{2} \int_{y_o}^{y_\star} \frac{dy}{
a\big(y, |\nabla u(x_\star, y)|\big)}
\le -\frac{f(c)}{2} \int_{y_\star/2}^{y_\star} \frac{dy}{
a\big(y, |\nabla u(x_\star, y)|\big)} =: -b_\star,
\end{eqnarray*}
and~$b_\star>0$. Taking the limit as~$y_o\to0$, we infer that
\[ \inf_{(x,y)\in\C} u(x,y) -\min_{x\in\overline\Omega} v(x)
\le u(x_\star, y_\star) - v(x_\star)
=u(x_\star, y_\star) - u(x_\star, 0) 
\le -b_\star,\]
in contradiction with Lemma \ref{0oPPy}.
\end{proof}

\subsection{Classification of stable solutions
in the case of convex/concave nonlinearities and end of the proof
of Theorem~\ref{TH:CC:1}}

With the previous preliminary work, we are now
in the position of completing the proof
of Theorem~\ref{TH:CC:1}. 
The proof will borrow an idea of~\cite{CH},
that is to test the equation against a vertical translation
of the solution (in this way, comparing the equation
with the linearized equation, the nonlinearity
is compared with its derivative, hence convexity
comes naturally into play).

\begin{proof}[Completion of the proof
of Theorem~\ref{TH:CC:1}]
We suppose that~$f$ is convex.

For any~$\varphi\in{\mathcal{A}}$,
we define
\begin{equation*}
J(\varphi):= -\int_{\C}
\frac{a_t(y,|\nabla u|)}{|\nabla u|}\,\big(\nabla u\cdot\nabla\varphi \big)^2.\end{equation*}
By~\eqref{DEF:B},
\begin{equation}\label{OIhg56}
\begin{split}
 \langle \mathcal{B}(y,\nabla u)
\nabla \varphi,\nabla \varphi\rangle
& = a(y,|\nabla u|)\,|\nabla\varphi|^2 +
\frac{a_t(y,|\nabla u|)}{|\nabla u|}\,\big(\nabla u\cdot\nabla\varphi \big)^2
\end{split}\end{equation}
Accordingly, and being $g \equiv 0$, the stability of $u$ implies that
\begin{equation}\label{stability:SEM}
\int_{\C} a(y,|\nabla u|) |\nabla \varphi|^2 -J(\varphi)
-\int_{\pa_B \C} f'(u) \varphi^2 
= I(\varphi)\ge 0,
\end{equation}
for any~$\varphi\in{\mathcal{A}}$. Let
$c:= \inf_{\mathcal{C}} u$, and let us consider a smooth function~$\tau_R:[0,+\infty)\to[0,1]$ such that
$\tau_R=1$ in~$[0,R]$, $\tau_R=0$ in~$[2R,+\infty)$
and $|\tau'_R|\le 10/R$.
We exploit first~\eqref{stability:SEM}
with~$\varphi(x,y):= \big(u(x,y)-c\big)\,\tau_R(y)$, and we obtain that
\begin{equation}\label{LKJH:OO1}
\begin{split}
0 \le 
\int_{\C} & a(y,|\nabla u|)\, |\nabla u|^2\,\tau_R^2 
+\int_{\C} a(y,|\nabla u|) \,|\nabla \tau_R|^2\,(u-c)^2
\\&+
2\int_{\C} a(y,|\nabla u|)\, (u-c)\,\tau_R \nabla u\cdot \nabla\tau_R
-J\big( (u-c)\,\tau_R\big) -
\int_{\pa_B \C} f'(u) (u-c)^2.
\end{split}
\end{equation}
Now we use\footnote{As a curiosity,
we stress that the choice of the test
function exploited to obtain~\eqref{LKJH:OO1}
is not the same as the one exploited to
obtain~\eqref{LKJH:OO2}.}~\eqref{WEAK} with~$\varphi:= 
\big(u(x,y)-c\big)\,\tau_R^2(y)$:
\begin{equation}\label{LKJH:OO2}
\begin{split}
&0=\int_{\C} a(y,|\nabla u|) \,|\nabla u |^2\,\tau_R^2
+ 2 \int_{\C} a(y,|\nabla u|) \,
(u-c)\,\tau_R \nabla u\cdot \nabla\tau_R
\\&\qquad\qquad
-\int_{\pa_B \C} f(u) \,(u-c).
\end{split}\end{equation}
Subtracting~\eqref{LKJH:OO2} from~\eqref{LKJH:OO1}, we obtain
\begin{equation}\label{LKJH:OO3}
\begin{split}
&0 \le
\int_{\C} a(y,|\nabla u|) \,|\nabla \tau_R|^2\,(u-c)^2
-J\big( (u-c)\,\tau_R\big)\\ 
&\qquad\qquad-
\int_{\pa_B \C} \big( f'(u) (c-u)+f(u)\big)\,(c-u).\end{split}
\end{equation}
Now we use the convexity of~$f$
to see that
\[
 f(u)+f'(u)\,(c-u)\le f(c).
\]
Since $f$ is strictly convex, the inequality is strict provided $\{u \neq c\} \neq \varnothing$.
Moreover, by Corollary~\ref{C:PT},
we know that~$u\ge c$, therefore
\begin{equation}\label{XF:g}
 \big(f(u)+f'(u)\,(c-u)\big)\,(c-u) \ge f(c)\,(c-u)
,\end{equation}
with strict inequality if~$\{ u\neq c\} \neq \varnothing$.
 
We also observe that, by \eqref{LO}, $-J((u-c)\tau_R) \le 0$. 
Plugging this and \eqref{XF:g} into \eqref{LKJH:OO3}, we conclude that
\begin{equation}\label{LKJH:OO3:BIS}
\begin{split}
&0 \le
\int_{\C} a(y,|\nabla u|) \,|\nabla \tau_R|^2\,(u-c)^2
-\int_{\pa_B \C} f(c)\,(c-u),\end{split}
\end{equation}
with strict inequality if ~$\pa_B \C \cap \{ u\neq c\} \neq \varnothing$. The previous inequality is satisfied for every $R>0$. Passing to the limit as $R \to +\infty$, we have  \begin{equation}\label{LKJH:OO4}
0 \le \lim_{R\to+\infty}
\int_{\C} a(y,|\nabla u|) \,|\nabla \tau_R|^2\,(u-c)^2 \le \lim_{R \to +\infty} \frac{C}{R^2} \int_{\Omega \times (R,2R)} a(y,|\nabla u|) 
=0,
\end{equation}
thanks to~\eqref{NUOVE}, the boundedness of $u$, and our choice of~$\tau_R$. Coming back to \eqref{LKJH:OO3:BIS}, this gives
\[
0 \le
\int_{\pa_B \C} f(c)\,(u-c)
\]
with strict inequality if~$\{ u\neq c\} \neq \varnothing$. 
But recalling that $f(c) \le 0$ and $u \ge c$ in $\mathcal{C}$, thanks to
Corollary~\ref{C:PT},
we have that the right hand side is nonpositive, so that the previous inequality cannot be strict; hence $\{u \neq c\} \cap \pa_B \C= \varnothing$, i.e. $u$ is constant on $\pa_B \C$.

To complete the proof, we come back to~\eqref{LKJH:OO2}. Using the fact that $u =c$ on $\pa_B \C$, the last integral there vanishes, so that
\begin{equation}\label{conclusion}
\begin{split}
0 & = \int_{\C} a(y,|\nabla u|) \,|\nabla u |^2\,\tau_R^2
+ 2 \int_{\C} a(y,|\nabla u|) \,
(u-c)\,\tau_R \nabla u\cdot \nabla\tau_R
 \\
\end{split}
\end{equation}
for every $R>0$. Moreover
\begin{multline*}
\left|\int_{\C} a(y,|\nabla u|)  (u-c) \tau_R \nabla u \cdot \nabla \tau_R\right| = \left|\int_{\Omega \times (R,2R)} a(y,|\nabla u|)  (u-c) \tau_R \nabla u \cdot \nabla \tau_R \right|\\
 \le \frac12 \int_{\Omega \times (R,2R)} a(y,|\nabla u|) |\nabla u|^2 + \frac{1}{2}\int_{\Omega \times (R,2R)} a(y,|\nabla u|)(u-c)^2 |\nabla \tau_R|^2 \to 0
\end{multline*}
as $R \to +\infty$, where we used the first integrability assumption in~\eqref{NUOVE} and \eqref{LKJH:OO4}. Therefore, passing to the limit as $R \to +\infty$ into \eqref{conclusion}, we deduce by the Monotone Convergence Theorem that
\[
 \int_{\C} a(y,|\nabla u|) \,|\nabla u |^2= 0,
\]
i.e. $u$ is constant in the whole $\C$.
\end{proof}

\section{Application to nonlocal problems with Neumann boundary conditions}\label{sec: s-Neumann}

Before proceeding with the proofs of Theorems \ref{thm: s-Neumann 1} and \ref{thm: s-Neumann 2}, we observe that, if $v$ is a solution to \eqref{s-Neumann}, then its extension $u$ satisfies the integrability condition \eqref{integrabilty assumption}.

\begin{lemma}\label{lem: agg}
Let~$v\in C^2(\overline{\Omega})$
be a solution of~\eqref{s-Neumann}.
Let~$u$ be the extension of~$v$.
Then, $u$
satisfies~\eqref{integrabilty assumption}
with~$a\equiv1$ and~$g\equiv0$.
\end{lemma}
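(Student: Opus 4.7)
The plan is to exploit the explicit spectral representation of the harmonic extension $u$, namely
\[
u(x,y) = \sum_{k=0}^\infty v_k\,e^{-\sqrt{\lambda_k}\,y}\,\varphi_k(x), \qquad v_k := \int_\Omega v\,\varphi_k\,dx,
\]
which is the standard Stinga--Torrea extension associated with $(-\Delta_N)^{1/2}$. Since $v \in C^2(\overline\Omega)$ satisfies the compatibility condition $\partial_{\tilde\nu} v = 0$ on $\partial\Omega$, it lies in the $L^2$-domain of $-\Delta_N$, so the spectral expansion of $-\Delta v = -\Delta_N v$ yields $\sum_k v_k^2\,\lambda_k^m < +\infty$ for $m = 0,1,2$. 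Note that the constant mode ($k=0$, $\lambda_0 = 0$) contributes only a constant to $u$ and drops out of every quantity appearing in \eqref{integrabilty assumption}.

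First I would treat $|\nabla u|^2$ and $|\nabla_x u_y|^2$ by direct term-by-term integration. Parseval in $\Omega$, combined with $\int_\Omega \nabla\varphi_k \cdot \nabla\varphi_j = \lambda_k\,\delta_{kj}$ (Green's identity plus the Neumann condition) and $\int_0^\infty e^{-2\sqrt{\lambda_k}\,y}\,dy = \tfrac{1}{2\sqrt{\lambda_k}}$, reduces these integrals to
\[
\int_\C |\nabla u|^2 = \sum_{k\ge 1} v_k^2\,\sqrt{\lambda_k}, \qquad \int_\C |\nabla_x u_y|^2 = \tfrac{1}{2}\sum_{k\ge 1} v_k^2\,\lambda_k^{3/2},
\]
both finite by Cauchy--Schwarz from the bounds $\sum v_k^2\,\lambda_k,\ \sum v_k^2\,\lambda_k^2 < +\infty$ noted above.

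The term $\int_\C |D^2_x u|^2$ requires a slightly different argument, since $\{D^2\varphi_k\}$ is not an orthogonal family. Here I would work slicewise in $y$: the function $u(\cdot,y) - v_0$ satisfies a homogeneous Neumann condition on $\partial\Omega$, while the harmonicity of $u$ gives $\Delta_x u(\cdot,y) = -u_{yy}(\cdot,y)$ pointwise. The classical $H^2$-regularity estimate for the Neumann Laplacian on the $C^{4,\alpha}$ domain $\Omega$ then yields
\[
\|D^2_x u(\cdot,y)\|_{L^2(\Omega)}^2 \le C\,\Big(\|u_{yy}(\cdot,y)\|_{L^2(\Omega)}^2 + \|u(\cdot,y) - v_0\|_{L^2(\Omega)}^2\Big),
\]
and integration in $y$ reduces matters to the sums $\sum_{k\ge 1} v_k^2\,\lambda_k^{3/2}$ and $\sum_{k\ge 1} v_k^2\,\lambda_k^{-1/2}$; the second of which is finite since $\lambda_k \ge \lambda_1 > 0$ for $k \ge 1$ and $\sum v_k^2 < +\infty$. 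The main technical point will be justifying rigorously the passage from the formal eigenfunction series to these Parseval identities uniformly down to $y = 0$; this relies on the rapid decay $e^{-\sqrt{\lambda_k}\,y}$ together with the convergence of $\sum v_k^2\,\lambda_k^m$ for $m \le 2$, and on identifying the classical extension with the weak solution $u \in \mathcal{H}(\C)$ from \eqref{po4rBH}.
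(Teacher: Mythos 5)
Your argument is correct but follows a genuinely different route from the paper's. The paper establishes pointwise exponential decay $|\tilde u(x,y)|\le C e^{-\sqrt{\lambda_1}\,y/2}$ for large $y$ by combining three ingredients: Weyl's asymptotic law to get $\lambda_k\gtrsim k^\beta$, the decay $|h_k|\lesssim\lambda_k^{-1}$ of the Fourier coefficients of $h=f(v)$ (via two integrations by parts using $h\in C^2$ and $\partial_\nu h=0$), and a Moser-iteration estimate $\|\varphi_k\|_{L^\infty(\Omega)}\lesssim\lambda_k^C$; it then bootstraps to the derivatives through interior elliptic estimates, so that~\eqref{integrabilty assumption} holds at infinity, the region $y$ small being covered separately by the $C^{3,\alpha}(\overline\C)$ regularity established in the proofs of Theorems~\ref{thm: s-Neumann 1} and~\ref{thm: s-Neumann 2} before the lemma is invoked. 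You instead work directly in $L^2$: since $v\in C^2(\overline\Omega)$ with $\partial_{\tilde\nu}v=0$ lies in the $L^2$-domain of $-\Delta_N$, one has $\sum_k\lambda_k^2 v_k^2<\infty$, and Parseval in $x$ together with $\int_0^\infty e^{-2\sqrt{\lambda_k}y}\,dy=(2\sqrt{\lambda_k})^{-1}$ yields $\int_\C|\nabla u|^2=\sum_{k\ge1}v_k^2\sqrt{\lambda_k}$ and $\int_\C|\nabla_x u_y|^2=\tfrac12\sum_{k\ge1}v_k^2\lambda_k^{3/2}$, both dominated by $\sum_k v_k^2(1+\lambda_k^2)$; the mixed term $|D^2_x u|^2$ is then handled slicewise via the Neumann $H^2$ estimate on $\Omega$ using $\Delta_x u(\cdot,y)=-u_{yy}(\cdot,y)$, after removing the zero mode so that the lower-order term is integrable in $y$. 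Your computations are correct and your route is shorter and more elementary, bypassing Weyl's law and the Moser iteration entirely, and it automatically gives integrability down to $y=0$ without invoking the separate Schauder regularity; what it forgoes is the pointwise exponential decay of $u$ and its derivatives, which the paper's argument produces as a by-product and which is stronger information than the lemma actually requires.
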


\begin{proof}
First of all, we show that, for any~$\beta
\in\left(0,\frac{2}{n}\right)$
there exists~$K(\beta) \in\N$ such
that, for any~$
k\in\N$ with~$k\ge K(\beta)$, we have
\begin{equation}\label{l k}
\lambda_k > k^\beta,
\end{equation}
where the~$\lambda_k$'s are the 
eigenvalues of~$-\Delta$ in~$\Omega$
with homogeneous Neumann condition.
To prove this, we argue by contradiction
and assume that there exists a sequence~$k_j$
such that~$\lambda_{k_j} \le k_j^\beta$.
We denote by~$N(\Lambda)$ the
number of eigenvalues 
which are strictly smaller than~$\Lambda^2$.
Then, since
$$ \lambda_1\le \dots\le \lambda_{k_j}\le
k_j^\beta,$$
we have that
\begin{equation}\label{7uhsGA}
N(k_j^{\frac\beta2}) \ge k_j.
\end{equation}
On the other hand, by the Weyl
Asymptotic Formula (see e.g.~\cite{WE}),
we have that
$$ N(\Lambda) = C_\star \Lambda^n +
o(\Lambda^n) \le 2C_\star \Lambda^n,$$
as~$\Lambda\to+\infty$,
for some~$C_\star>0$. So, taking~$\Lambda:=
k_j^{\frac\beta2}$, we have that
$$ N(k_j^{\frac\beta2}) \le 2C_\star
k_j^{\frac{\beta n}{2}},$$
as~$j\to+\infty$. By comparing this with~\eqref{7uhsGA},
we obtain that~$k_j\le 2C_\star
k_j^{\frac{\beta n}{2}}$, as~$j\to+\infty$.
That is,
$$ 1\le 2C_\star \lim_{j\to+\infty}
k_j^{\frac{\beta n}{2}-1} =0.$$
This is a contradiction, and so~\eqref{l k}
is proved.

Now we define~$h(x):=f(v(x))$.
We also set
$$ h_k :=\int_\Omega 
h(x)\,\varphi_k(x)\,dx.$$
We remark that~$\partial_\nu h=
f'(v)\,\partial_\nu v=0$ along~$
\partial\Omega$. Therefore
\begin{eqnarray*}
-\lambda_k h_k &=&
-\lambda_k\int_\Omega 
h\,\varphi_k = \int_\Omega
h \Delta\varphi_k
\\ &=&
\int_\Omega
\Delta h\,\varphi_k
-{\rm div}\,\big(\varphi_k \nabla h\big)
+{\rm div}\,\big(h \nabla \varphi_k \big)
\,dx = \int_\Omega
\Delta h\,\varphi_k.
\end{eqnarray*}
Since~$f$ is~$C^2$, then so is~$h$,
and thus we find that
\begin{equation}\label{IO09}
\lambda_k\,|h_k|\le
\| h\|_{C^2(\Omega)}\,
\int_\Omega |\varphi_k| \le C,
\end{equation}
for some~$C>0$.

Now we remark that~$u$ can be written
as
$$ u(x,y) =\sum_{k=0}^{+\infty}
h_k\,\varphi_k(x)\, e^{-\sqrt{\lambda_k} y}.$$
So, if we set
$$ \tilde u(x,y) := u(x,y)- h_0\varphi_0(x)=
u(x,y)- \frac{h_0}{\sqrt{|\Omega|}},$$
we obtain that, for any~$y\ge2$,
\begin{eqnarray*}
|\tilde u(x,y)|&\le&
\sum_{k=1}^{+\infty}
|h_k|\,|\varphi_k(x)|\, 
e^{-\sqrt{\lambda_k} y} \\
&\le& e^{-\frac{\sqrt{\lambda_1} y}{2}}
\sum_{k=1}^{+\infty}
|h_k|\,|\varphi_k(x)|\,
e^{-\frac{\sqrt{\lambda_k} y}{2}}\\
&\le& 
e^{-\frac{\sqrt{\lambda_1} y}{2}}
\sum_{k=1}^{+\infty} \frac{C}{\lambda_k}
\,|\varphi_k(x)|\,
e^{-\sqrt{\lambda_k} },
\end{eqnarray*}
thanks to~\eqref{IO09}.
Now we claim that
\begin{equation}\label{PHIk}
\|\varphi_k\|_{L^\infty(\Omega)}\le C_1\,\lambda_k^{C_2},
\end{equation}
for some $C_1$, $C_2>0$.
To prove this, we use a Moser iteration method.
Namely, we observe that, for $\delta\ge 0$, testing
the eigenvalue equation against $\varphi_k^{1+\delta}$,
we have
$$ \frac{1+\delta}{\left( 1+\frac{\delta}{2}\right)^2 }
\int_\Omega \left|\nabla \varphi_k^{1+\frac\delta2}\right|^2 =
\int_\Omega \nabla\varphi_k \cdot\nabla \varphi_k^{1+\delta}= 
-\int_\Omega \Delta\varphi_k \varphi_k^{1+\delta} 
= \lambda_k \int_\Omega \varphi_k^{2+\delta} .$$
Therefore, by the Sobolev embedding,
$$ \frac{1+\delta}{\left( 1+\frac{\delta}{2}\right)^2 }\,S\,
\left[ \int_\Omega \varphi_k^{2_* \, \left(1+\frac\delta2\right)}
\right]^{\frac2{2_*}}
\le \lambda_k \int_\Omega \varphi_k^{2+\delta} ,$$
with $2_* > 2$ and $S>0$.

Therefore, we can choose the recursive sequence $\delta_0:=0$
and $\delta_{j+1}:=2_* -2 +\frac{2_*}{2}\delta_j$, and we 
remark that $(2+\delta_{j+1}) \frac{2}{2_*}=2+\delta_j$,
so we 
obtain
$$ \| \varphi_k\|_{ L^{2+\delta_{j+1}}(\Omega) }
\le 
\left[ \frac{\left( 1+\frac{\delta_j}{2}\right)^2 }{1+\delta_j}
\,\frac{\lambda_k}{S}\right]^{\frac{1}{2+\delta_j}}\, 
\| \varphi_k\|_{ L^{2+\delta_{j}}(\Omega) }. $$
This gives that
\begin{eqnarray*}
\| \varphi_k\|_{ L^{2+\delta_{j+1}}(\Omega) } &\le&
\prod_{i=0}^{j}
\left[ 
\frac{\left( 1+\frac{\delta_i}{2}\right)^2 }{1+\delta_i}
\,\frac{\lambda_k}{S}\right]^{\frac{1}{2+\delta_i}} 
\\ &=&
\exp \left[
\sum_{i=0}^{j} 
\frac{1}{2+\delta_i}
\log
\left[
\frac{\left( 1+\frac{\delta_i}{2}\right)^2 }{1+\delta_i}
\,\frac{\lambda_k}{S}\right] 
\right]
\\&\le&
\exp \left[
\sum_{i=0}^{+\infty}
\frac{\left|
\log \frac{\left( 1+\frac{\delta_i}{2}\right)^2 }{1+\delta_i}
\right|}{2+\delta_i}
+\frac{
\log\frac{\lambda_k}{S}
}{2+\delta_i}
\right] .
\end{eqnarray*}
Since $\delta_i\ge \left(\frac{2_*}{2}\right)^{i-1}\,(2_*-2)$
for any $i\in\N$, we conclude that
$$ \| \varphi_k\|_{ L^{2+\delta_{j+1}}(\Omega) } \le
\exp \left[ C\left(1+
\log\lambda_k \right)
\right] \le C_1 \lambda_k^{C_2},$$
for some $C$, $C_1$, $C_2>0$.
Then, we send $j\to+\infty$ and we obtain \eqref{PHIk}.

Therefore, up to renaming the constants, we conclude that,
for any~$y\ge2$,
$$|\tilde u(x,y)|\le
C e^{-\frac{\sqrt{\lambda_1} y}{2}}
\sum_{k=1}^{+\infty} \lambda_k^{C-1} 
e^{-\sqrt{\lambda_k}} 
\le Ce^{-\frac{\sqrt{\lambda_1} y}{2}},$$
where the latter inequality is a consequence
of~\eqref{l k} (and of the fact that
the exponential decays faster than
any power). Since~$\tilde u$ is harmonic,
by elliptic estimates
we find that also the derivatives of~$\tilde u$
(and so of~$u$) are bounded by~$C
e^{-\frac{ \sqrt{\lambda_1} y}{2}}$,
for any~$y\ge3$. {F}rom this,
one obtains~\eqref{integrabilty assumption}.
\end{proof}

\begin{proof}[Proof of Theorems \ref{thm: s-Neumann 1} and \ref{thm: s-Neumann 2}] 
Let $v \in H^{1/2}(\Omega) \cap L^\infty(\Omega)$ be a solution to \eqref{s-Neumann}. Then, by \cite{StinVolz}, $v$ is the trace on $\Omega \times\{0\}$ of a weak solution $u \in \mathcal{H}(\C)$ to \eqref{s-Neumann extended}:
\[
\begin{cases}  
\Delta u =0 & \text{in $\C$}\\
\pa_\nu u=0 & \text{on $\pa_L \C$} \\
-\pa_y u = f(u) & \text{on $\pa_B \C$}.
\end{cases}
\]
Moreover, arguing as in Theorem 3.5-part~4 in~\cite{StinVolz}
(recalling that~$f\in C^{2,\alpha}(\R)$ and~$\Omega$
is of class~$C^{4,\alpha}$, and using
higher order Schauder estimates), one sees
that $u \in C^{3,\alpha}(\overline{\C})$. Thus, Lemma \ref{lem: agg} implies that \eqref{integrabilty assumption} is satisfied, and hence~$u$
is a classical, stable solution of \eqref{s-Neumann extended}, 
and satisfies all the assumptions in~\eqref{hp su u-2}.

As a consequence, if $\Omega$ is convex we are in position to apply Theorem \ref{P-CA}, deducing that $u$ depends only on $y$. Therefore, $v=u(\cdot,0)$ has to be constant.

On the other hand,
if $f$ is either strictly convex, or strictly concave, Theorem \ref{TH:CC:1} implies in the same way that $v$ is constant.
\end{proof}

\section{A counterexample}\label{HJ:AO}

Now we prove the statement given in Example~\ref{EXAMPLE}

\begin{proof}[Proof of Example~\ref{EXAMPLE}]
We let~$\epsilon\in(0,1)$ and~$h_*\in C^2(\R)$ be such that~$h_*(x)=h(x)$
for any~$x\in[-1,1]$, $h_*(x)=2x$
if~$|x|\in \left[1+\frac{\epsilon}{11},1
+\frac{2\epsilon}{11}\right]$, and~$h_*(x)=-2x$
if~$|x|\in \left[1+\frac{3\epsilon}{11},1
+\frac{4\epsilon}{11}\right]$.

Then, by Theorem 1.1 in~\cite{ALL}, there exists~$v\in
C^2( (-2,2))\cap C(\R)$ which is
compactly supported, such that~$(-\Delta)^s v=0$
in~$(-2,2)$ and~$\|v-h_*\|_{C^2 ( (-2,2))}
\le\epsilon$. In particular,
$$ \|v-h\|_{C^2 ((-1,1))}
=\|v-h_*\|_{C^2 ((-1,1))}
\le\epsilon.$$
Moreover, if~$x\in\left[1+\frac{\epsilon}{11},1
+\frac{2\epsilon}{11}\right]$, we have that
$$ v'(x) \ge h_*'(x) - \|v-h_*\|_{C^1 ((-2,2))}
\ge2 - \|v-h_*\|_{C^2 ((-2,2))} \ge 1.$$
Similarly, if~$x\in\left[1+\frac{3\epsilon}{11},1
+\frac{4\epsilon}{11}\right]$, we have that~$v'(x)\le-1$.

As a consequence, there exist~$\delta_1$, $\delta_2\in 
\left[\frac{\epsilon}{11},\frac{4\epsilon}{11}\right]$ such that~$v'(-1-\delta_1)=
v'(1+\delta_2)=0$.
In particular, for any~$x\in\{-1-\delta_1,\,1+\delta_2\}$,
$$ \lim_{\substack{y\in (-1-\delta_1,1+\delta_2) \\ y\to x}}
\frac{v(x)-v(y)}{|x-y|^s}=0.$$
Then, $v$ satisfies the desired result.
\end{proof}

\section*{Acknowledgements}
Part of this work was carried out while Serena Dipierro
and Nicola Soave were visiting the {\it
Weierstra{\ss}-Institut f\"ur Angewandte
Analysis und Stochastik} in Berlin, which
they wish to thank for the hospitality. 

This work has been supported by the Humboldt Foundation, the
ERC grant 277749 {\it E.P.S.I.L.O.N.} ``Elliptic
Pde's and Symmetry of Interfaces and Layers for Odd Nonlinearities'',
the PRIN grant 201274FYK7
``Aspetti variazionali e
perturbativi nei problemi differenziali nonlineari''
and
the ERC grant 339958 {\it Com.Pat.} ``Complex Patterns for 
Strongly Interacting Dynamical Systems''.

\end{document}